\newenvironment{enumerate*}%
  {\begin{enumerate}[(I)]%
    \setlength{\itemsep}{10pt}%
    \setlength{\parskip}{0pt}}%
  {\end{enumerate}}
\newtheorem{theorem}{Theorem}[section]
\newtheorem{proposition}[theorem]{Proposition}
\newtheorem{corollary}[theorem]{Corollary}
\newtheorem{question}[theorem]{Question}
\theoremstyle{definition}
\DeclareMathOperator{\DPT}{\mathsf{DPT}}
\DeclareMathOperator{\PT}{\mathsf{PT}}
\DeclareMathOperator{\con}{con}
\DeclareMathOperator{\non}{non}
\DeclareMathOperator{\ary}{-ary}
\begin{document}

\title[]{Supertrees} \keywords{}
\subjclass[2010]{}

\author[]{Colin Defant}
\address[]{Fine Hall, 304 Washington Rd., Princeton, NJ 08544}
\email{cdefant@princeton.edu}
\author[]{Noah Kravitz}
\address[]{Grace Hopper College, Yale University, New Haven, CT 06510, USA}
\email{noah.kravitz@yale.edu}
\author[]{Ashwin Sah}
\address[]{Massachusetts Institute of Technology, Cambridge, MA 02139 USA}
\email{asah@mit.edu}

\begin{abstract} 
A $k$-universal permutation, or $k$-superpermutation, is a permutation that contains all permutations of length $k$ as patterns.  The problem of finding the minimum length of a $k$-superpermutation has recently received significant attention in the field of permutation patterns.  One can ask analogous questions for other classes of objects.  In this paper, we study $k$-supertrees.  For each $d\geq 2$, we focus on two types of rooted plane trees called $d$-ary plane trees and $[d]$-trees.  Motivated by recent developments in the literature, we consider ``contiguous'' and ``noncontiguous'' notions of pattern containment for each type of tree.  We obtain both upper and lower bounds on the minimum possible size of a $k$-supertree in three cases; in the fourth, we determine the minimum size exactly.  One of our lower bounds makes use of a recent result of Albert, Engen, Pantone, and Vatter on $k$-universal layered permutations.
\end{abstract}
\maketitle

\section{Introduction}

\subsection{Background}\label{subsec:background}

Let $S_n$ denote the set of permutations of the set $[n]=\{1,\ldots,n\}$.  We write permutations as words in one-line notation. Given $\mu\in S_m$, we say that the permutation $\sigma=\sigma_1\cdots\sigma_n\in S_n$ \emph{contains the pattern} $\mu$ if there are indices $i_1<\cdots<i_m$ such that $\sigma_{i_1}\cdots\sigma_{i_m}$ has the same relative order as $\mu$. Otherwise, we say that $\sigma$ \emph{avoids} $\mu$. Consecutive pattern containment and avoidance are defined similarly by requiring the indices $i_1,\ldots,i_m$ to be consecutive integers. An enormous amount of research in the past half-century has focused on pattern containment and pattern avoidance in permutations \cite{Bona, Kitaev, Linton}. Plenty of particularly popular permutation pattern problems possess the following form: 
\begin{center}
What is the minimum length of a permutation that contains all patterns of a certain type?
\end{center}
For example, one can ask for the smallest size of a permutation containing all length-$k$ patterns; such a permutation is often called a \textit{$k$-universal permutation} or a \textit{$k$-superpermutation} \cite{Arratia, Eriksson, Miller}. The analogous question for consecutive pattern containment has also received attention \cite{Ashlock, Honner, Houston, Johnson}. Rather than discuss all of the variants of this problem that have emerged, we refer the reader to the beautiful article \cite{Engen}, which surveys many of the results in this area.

In recent years, the notion of pattern containment has spread to other combinatorial objects. It is natural to ask about the minimum possible sizes of ``universal objects'' in these contexts.  This idea dates back to 1964, when Rado \cite{Rado} asked for the minimum number of vertices in a graph that contains all $k$-vertex graphs as induced subgraphs. A vast amount of literature has been devoted to ``Rado's problem'' alone (see \cite{Alon, Alstrup, Butler, Chung, Esperet} and the references therein).

In this paper, we focus on rooted plane trees. Several variations on the theme of contiguous and noncontiguous pattern containment in rooted plane trees have appeared in \cite{Baril,Dairyko,Dotsenko,Flajolet1, Flajolet2, Gabriel,Pudwell, Rowland}. The purpose of the present article is to investigate the minimum possible size of a \emph{$k$-universal tree}, or \textit{$k$-supertree}, in some of these contexts.  Similar questions about universal trees have been studied since the 1960's \cite{CGSCaterpillars, ChungGraham1, ChungGraham2, ChungGraham3, ChungGraham4, Goldberg}. However, our notions of universal rooted plane trees are new and are inspired by more recent definitions of pattern containment in trees.

\subsection{Main Definitions and Terminology}\label{subsec:main-results}

Let $d\geq 2$ be an integer. A \emph{$d$-ary plane tree} is either an empty tree or a root vertex with $d$ subtrees that are linearly ordered from left to right and are themselves $d$-ary plane trees. A $2$-ary plane tree is also called a \emph{binary plane tree}. Note that the subtrees of a vertex can be empty. By the ``$i^\text{th}$ subtree" of a vertex, we simply mean the $i^\text{th}$ subtree from the left. We say an edge has ``type $i$" if it connects a vertex to the root of its $i^\text{th}$ subtree. A $d$-ary plane tree is called \emph{full} if every vertex has either $0$ or $d$ children (or, equivalently, if only leaves have empty subtrees).     

Every connected induced subgraph $T^*$ of a $d$-ary plane tree $\mathcal T$ can be viewed as a $d$-ary plane tree in the obvious way. If $T^*$ is isomorphic as a $d$-ary plane tree to another $d$-ary plane tree $T$, then we say that $T^*$ is a \emph{contiguous embedding} of $T$ in $\mathcal T$ and that $\mathcal T$ \emph{contiguously contains} $T$. For example, the $3$-ary plane tree
\[\begin{array}{l}
\includegraphics[height=2cm]{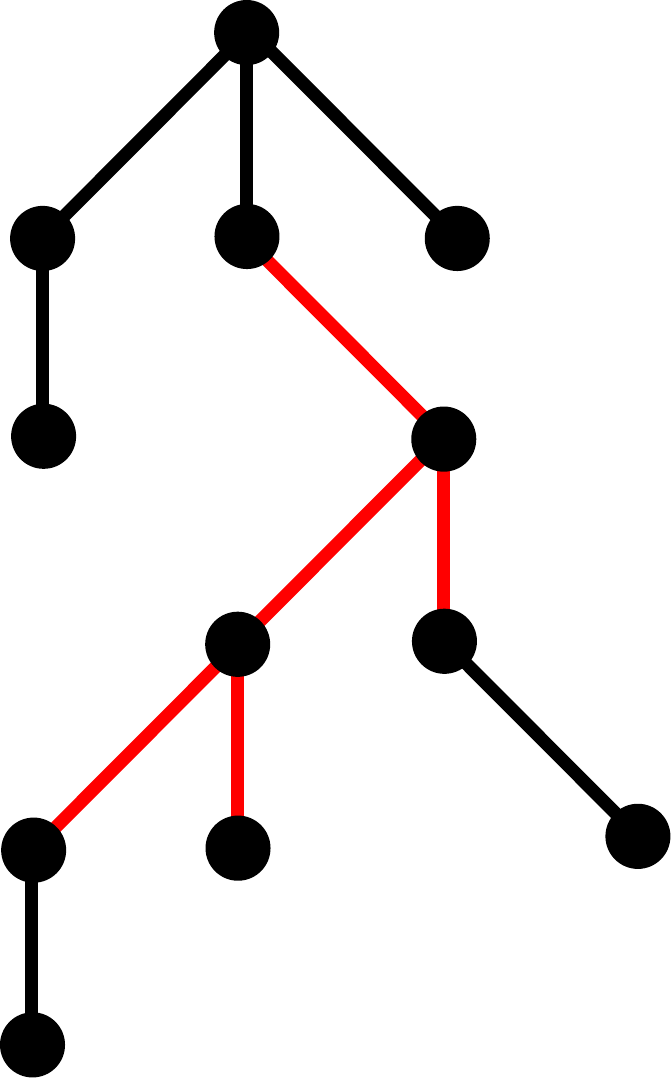}
\end{array}\text{ contiguously contains }\begin{array}{l}
\includegraphics[height=1.2cm]{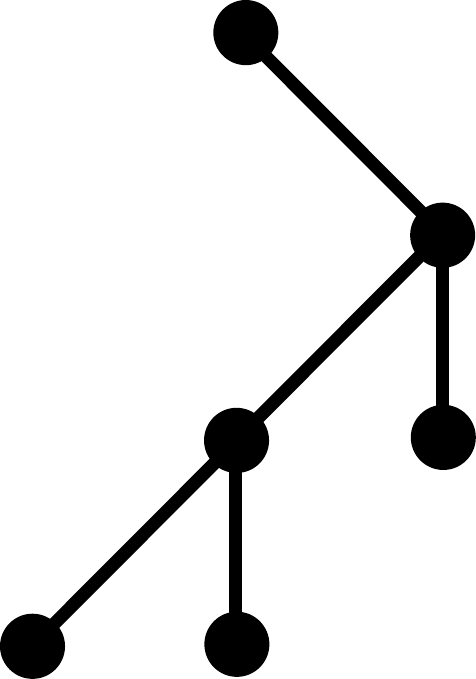}
\end{array}\text{ but does not contiguously contain }\begin{array}{l}
\includegraphics[height=1.2cm]{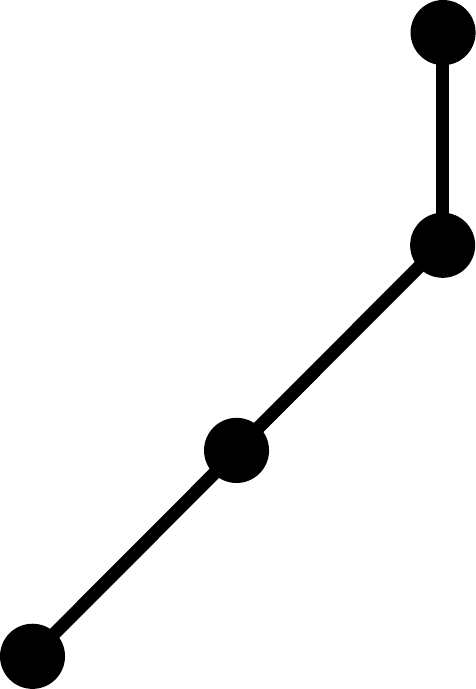}
\end{array}.\]
Rowland \cite{Rowland} defined contiguous pattern containment in full binary plane trees, and the authors of \cite{Gabriel} made a similar definition for full $3$-ary plane trees. In general, for any $k \geq 0$, the operation of removing (pruning) all leaves provides a natural bijection from the set of full $d$-ary plane trees with $dk+1$ vertices to the set of $d$-ary plane trees with $k$ vertices.  Using this bijection, one can easily see that our definition of contiguous pattern containment for $d$-ary plane trees corresponds to the definitions in \cite{Gabriel,Rowland} when $d\in\{2,3\}$. Our formulation has the advantage of working with smaller trees so that diagrams are not cluttered with unnecessary leaves.  

Given a vertex $u$ in a $d$-ary plane tree, let $\chi(u)$ be the set of all $i\in[d]$ such that $u$ has a nonempty $i^\text{th}$ subtree. Suppose $e$ is an edge of type $i$ that connects $u$ to one of its children $v$ (meaning $i\in\chi(u)$). We can consider the operation of \textit{contracting} the edge $e$. We call this operation a \emph{legal contraction} if every element of $\chi(u)\setminus\{i\}$ is either strictly smaller than $\min(\chi(v))$ or strictly greater than $\max(\chi(v))$. Informally speaking, this definition ensures that edges do not ``overlap" or ``cross'' each other during a legal contraction. After legally contracting an edge in a $d$-ary plane tree, we are left with a new $d$-ary plane tree. Given $d$-ary plane trees $\mathcal T$ and $T$, we say that $\mathcal T$ \emph{noncontiguously contains $T$} if we can obtain $T$ from $\mathcal T$ through a sequence of legal edge contractions. For example, the $3$-ary plane tree
\[\begin{array}{l}
\includegraphics[height=1.7cm]{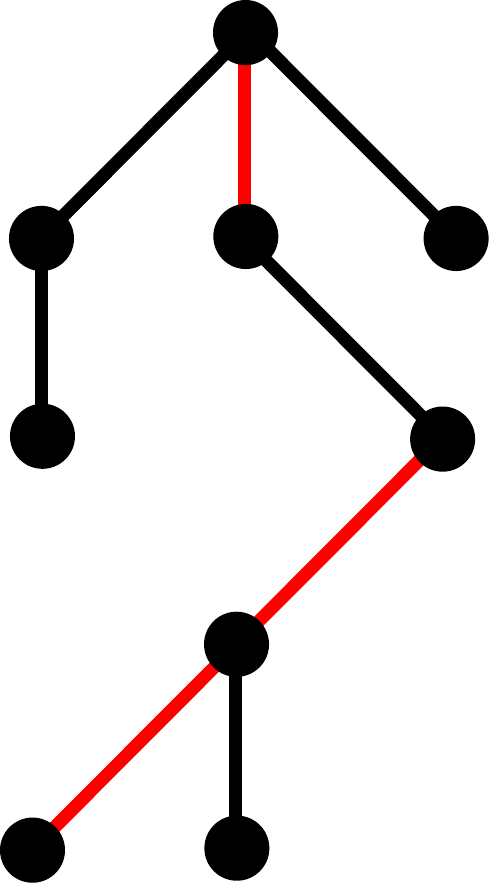}
\end{array}\text{ noncontiguously contains }\begin{array}{l}
\includegraphics[height=1.2cm]{SupertreesPIC6}
\end{array}\text{ but does not noncontiguously contain }\begin{array}{l}
\includegraphics[height=.85cm]{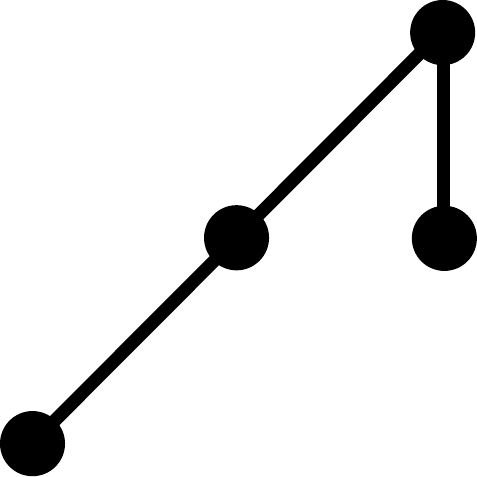}
\end{array}.\]
When $d=2$, we can use the pruning bijection mentioned above to show that our definition of noncontiguous pattern containment in binary plane trees is consistent with the notion considered in \cite{Dairyko,Pudwell}. 

Given a set $S$ of positive integers, an \emph{$S$-tree} is a rooted tree in which the children of each vertex are linearly ordered from left to right and the number of children of each vertex is an element of $S\cup\{0\}$. One can think of a $[d]$-tree as a tree obtained from a $d$-ary plane tree by forgetting about empty subtrees and the types of edges. What we term $[2]$-trees are more commonly called ``unary-binary trees" or ``Motzkin trees." 

Every connected induced subgraph $T^*$ of a $[d]$-tree $\mathcal T$ is itself a $[d]$-tree. If $T^*$ is isomorphic as a $[d]$-tree to another $[d]$-tree $T$, then we say that $T^*$ is a \emph{contiguous embedding} of $T$ in $\mathcal T$ and that $\mathcal T$ \emph{contiguously contains} $T$. For example, the $[3]$-tree 
\[\begin{array}{l}
\includegraphics[height=1.4cm]{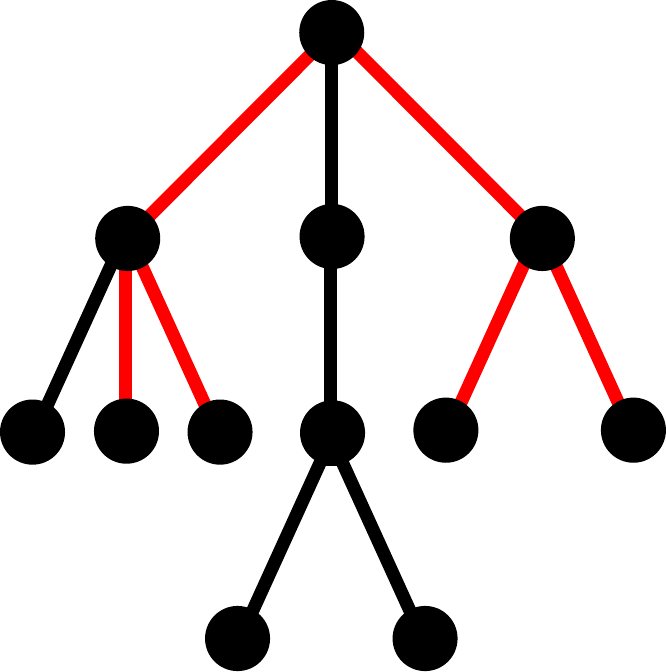}
\end{array}\text{ contiguously contains }\begin{array}{l}
\includegraphics[height=.95cm]{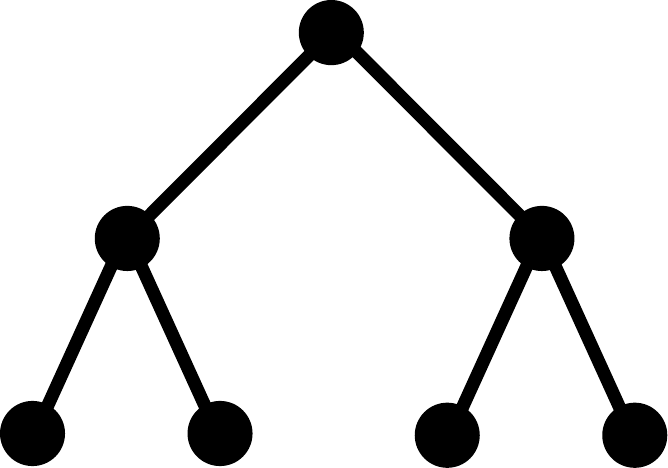}
\end{array}\text{ but does not contiguously contain }\begin{array}{l}
\includegraphics[height=1.2cm]{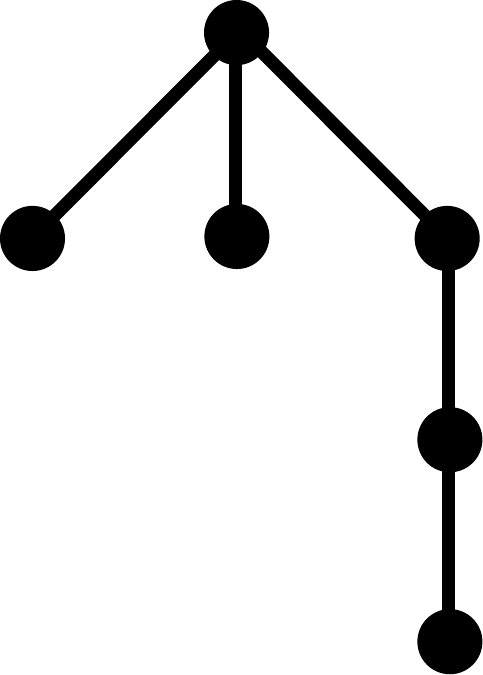}
\end{array}.\]

Suppose $e$ is an edge in a $[d]$-tree that connects a vertex $u$ to one of its children $v$. If the total number of children of $u$ and $v$, excluding $v$ itself, is at most $d$, then the operation of contracting the edge $e$ is a \emph{legal contraction}. Note that if $v'$ was a child of $u$ to the left (respectively, right) of $v$, then $v'$ remains to the left (respectively, right) of the children of $v$ after we legally contract $e$.  After legally contracting an edge in a $[d]$-tree, we are left with a new $[d]$-tree. Given $[d]$-trees $\mathcal T$ and $T$, we say that $\mathcal T$ \emph{noncontiguously contains $T$} if we can obtain $T$ from $\mathcal T$ through a sequence of legal edge contractions.  For example, the $[3]$-tree
\[\begin{array}{l}
\includegraphics[height=1.7cm]{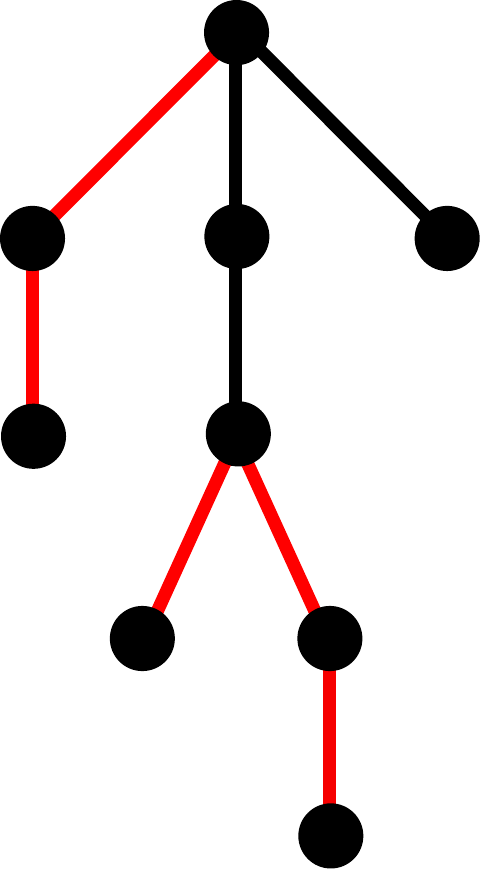}
\end{array}\text{ noncontiguously contains }\begin{array}{l}
\includegraphics[height=.95cm]{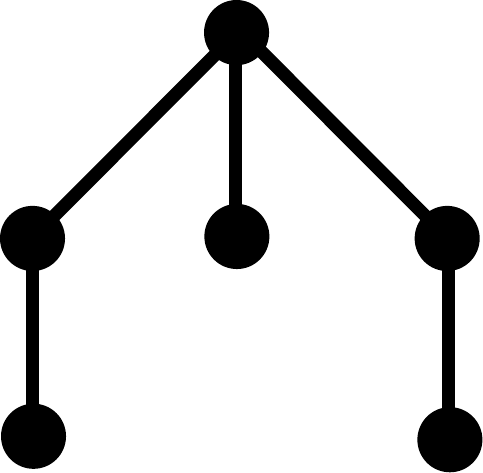}
\end{array}\text{ but does not noncontiguously contain }\begin{array}{l}
\includegraphics[height=1.2cm]{SupertreesPIC10}
\end{array}.\]

A \emph{contiguous $k$-universal $d$-ary plane tree} is a $d$-ary plane tree that contiguously contains all $d$-ary plane trees with $k$ vertices. Similarly, a \emph{noncontiguous $k$-universal $d$-ary plane tree} is a $d$-ary plane tree that noncontiguously contains all $d$-ary plane trees with $k$ vertices. Let $N_{d\text{-ary}}^\text{con}(k)$ (respectively, $N_{d\text{-ary}}^\text{non}(k)$) denote the minimum number of vertices in a contiguous (respectively, noncontiguous) $k$-universal $d$-ary plane tree. Contiguous and noncontiguous $k$-universal $[d]$-trees are defined analogously. Let $N_{[d]}^\text{con}(k)$ (respectively, $N_{[d]}^\text{non}(k)$) denote the minimum number of vertices in a contiguous (respectively, noncontiguous) $k$-universal $[d]$-tree. We refer to $k$-universal trees as ``$k$-supertrees" when the type of tree and the type of containment are clear from context. 

We say the root of a rooted plane tree has \emph{depth $0$}; a nonroot vertex has \emph{depth $r$} if its parent has depth $r-1$. The \emph{height} of a rooted plane tree is the maximum depth of its vertices.  We write $|T|$ for the number of vertices in $T$.  The \emph{perfect tree} $P_h^{(d)}$ is the unique $d$-ary plane tree of height $h$ that has exactly $d^r$ vertices of depth $r$ for each $r\in\{0,\ldots,h\}$.

It will be useful to have a formally defined ``gluing" operation for combining trees. Suppose $T$ is a rooted plane tree and $v$ is a leaf of $T$. If $T'$ is another rooted plane tree (of the same type as $T$, of course), then we can \textit{glue} $T'$ to $v$ by attaching $T'$ to $T$, where we identify the root of $T'$ with $v$. For example, if $T$ and $v$ are \[\begin{array}{l}
\includegraphics[height=1.6cm]{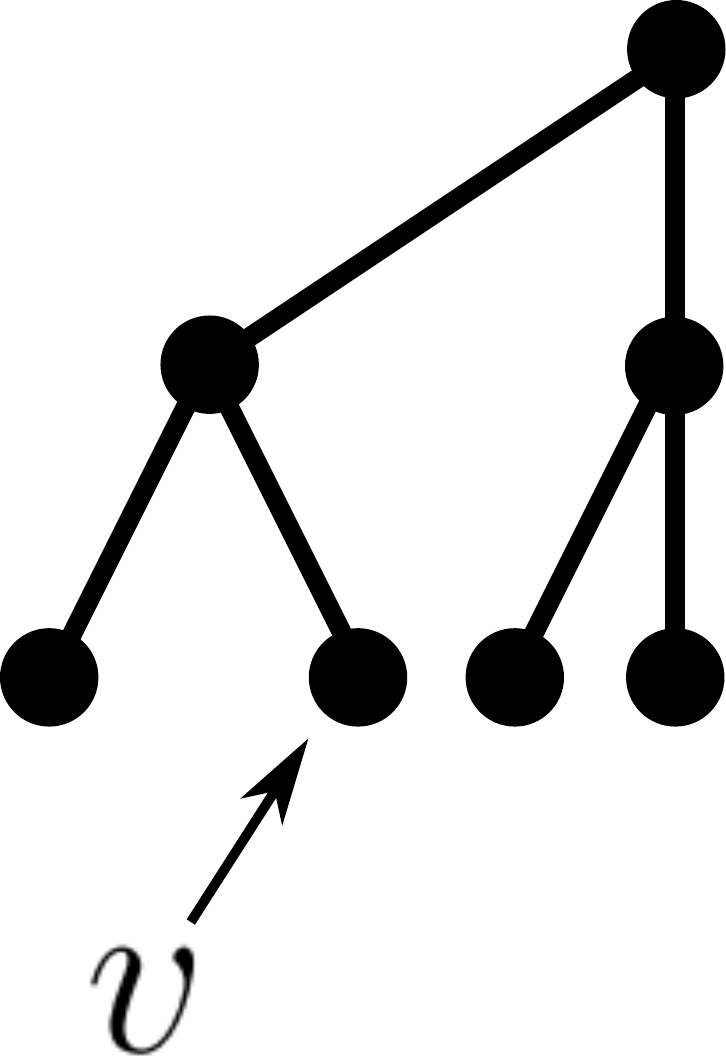}
\end{array}\text{ and }T'\text{ is }\begin{array}{l}
\includegraphics[height=.6cm]{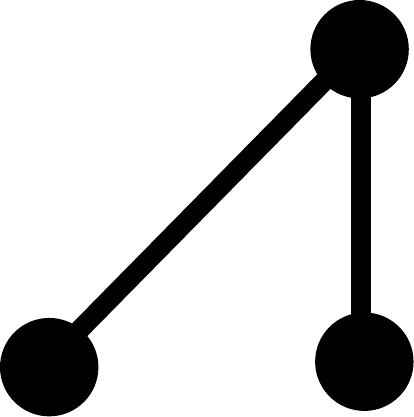}
\end{array},\text{ then the result of gluing $T'$ to $v$ is }\begin{array}{l}
\includegraphics[height=1.53cm]{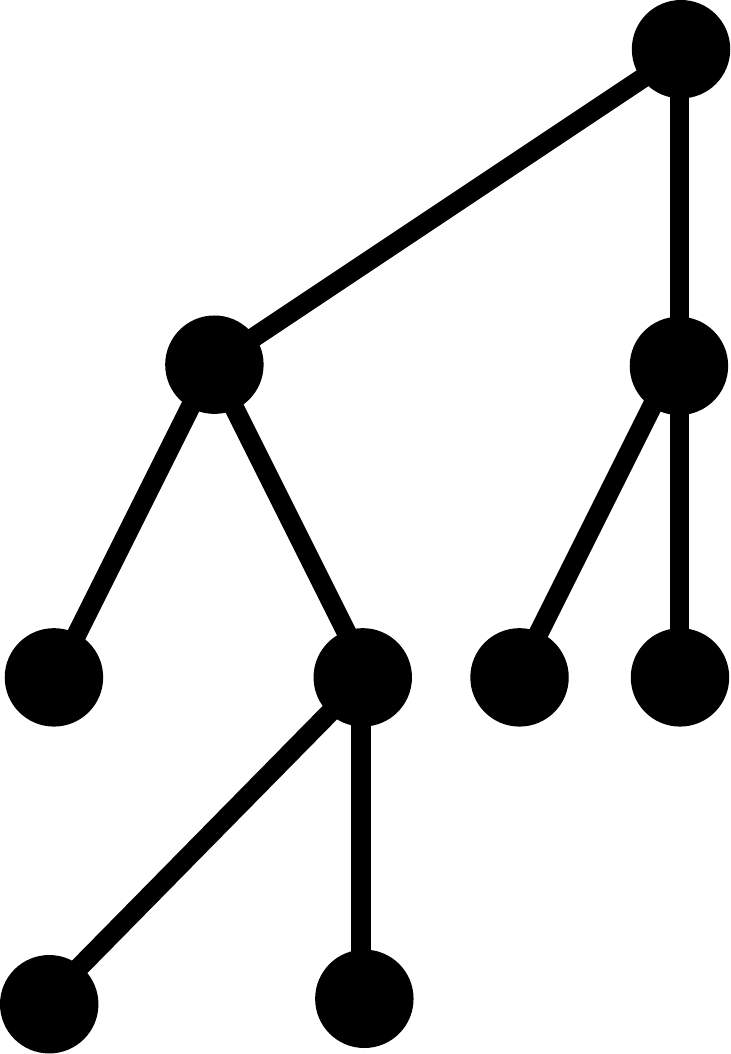}
\end{array}.\]

\subsection{Main Results}

Let $\eta_2=1$, and let $\eta_d=\frac{1}{2}$ for every $d\geq 3$. In Section~\ref{sec:0,1,...,d}, we will define numbers $\rho_d$, which arise as reciprocals of roots of certain polynomials. The purpose of the subsequent sections is to prove the following estimates (where $d\geq 2$ is a fixed integer):

\begin{enumerate*}
\item $N_{d\text{-ary}}^{\text{con}}(k)=d^{k-1}+k-1$;
\item $\eta_d\, k\log_2(k)(1+o(1))\leq N_{d\text{-ary}}^{\non}(k)\leq k^{\frac{1}{2}\log_2(k)(1+o(1))}$;
\item $d^{\frac{k-2}{d}}\leq N_{[d]}^{\con}(k)\leq (\rho_d+o(1))^k$;
\item  $\dfrac{\eta_d}{d}k\log_2(k)(1+o(1))\leq N_{[d]}^{\non}(k)\leq k^{\frac{1}{2}\log_2(k)(1+o(1))}$.
\end{enumerate*}

Some remarks are in order regarding these estimates. First of all, note that it is unusual to be able to prove an exact formula for the minimum size of a universal object, as we have done in (I). Next, a contiguous $k$-universal $d$-ary plane (respectively, $[d]$-) tree is certainly also a noncontiguous $k$-universal $d$-ary plane (respectively, $[d]$-) tree, so we trivially have $N_{d\text{-ary}}^{\text{con}}(k)\geq N_{d\text{-ary}}^{\text{non}}(k)$ and $N_{[d]}^{\con}(k)\geq N_{[d]}^{\non}(k)$.  Furthermore, one can change a $d$-ary plane tree into a $[d]$-tree by simply forgetting about empty subtrees and edge types. Doing so allows us to view a contiguous (respectively, noncontiguous) $k$-universal $d$-ary plane tree as a contiguous (respectively, noncontiguous) $k$-universal $[d]$-tree. Therefore, it follows from (I) that $N_{d\text{-ary}}^{\text{non}}(k)$, $N_{[d]}^{\text{con}}(k)$, and $N_{[d]}^{\text{non}}(k)$ are all at most $d^{k-1}+k-1$. However, the upper bounds in (II), (III), and (IV) greatly improve upon this observation. Indeed, the upper bounds in (II) and (IV) are subexponential in $k$, and the base of the exponential in the upper bound in (III) is much smaller than $d$. In Section~\ref{sec:0,1,...,d}, we will see that $\rho_d=1+\frac{4\log d}{d}(1+o(1))$ as $d\to\infty$. Compare this with the exponential lower bound in (III), in which the base of the exponential is $d^{1/d}=1+\frac{\log d}{d}(1+o(1))$. Finally, note that since $N_{[d]}^{\non}(k)\leq N_{[d]}^{\con}(k)$, we could deduce immediately from (III) that $N_{[d]}^{\non}(k)\leq (\rho_d+o(1))^k$. However, the subexponential upper bound in (IV) greatly improves upon this.  Similarly, the lower bound in (III) beats the lower bound in (IV).  We will show in Section~\ref{sec:0,1,...,d} that $N_{d\text{-ary}}^{\non}(k)$ and $N_{[d]}^{\non}(k)$ differ by at most a constant factor (for each fixed $d$); this fact explains why (II) and (IV) look similar.

Producing nontrivial lower bounds for the sizes of noncontiguous $k$-universal trees is fairly difficult; this is analogous to the permutation setting, where nontrivial lower bounds are scarce. For example, the best known lower bound for the length $n$ of a permutation that contains all length-$k$ patterns is given by $n\geq k^2/e^2$; this is a consequence of the simple observation that ${n\choose k}\geq k!$. The number of $d$-ary plane trees with $k$ vertices is $\frac{1}{(d-1)k+1}{dk\choose k}$, so a similar argument in our setting shows that ${N_{d\text{-ary}}^{\text{non}}(k)\choose k}\geq\frac{1}{(d-1)k+1}{dk\choose k}$. This translates to a lower bound of roughly $dk$ for $N_{d\text{-ary}}^{\text{non}}(k)$. Although many of the lower bounds in the (noncontiguous) permutation setting are trivial, there is a noteworthy exception: Albert, Engen, Pantone, and Vatter managed to obtain an explicit formula for the minimum length of a permutation that (noncontiguously) contains all length-$k$ layered permutations. Making use of a bijection between $231$-avoiding permutations and binary plane trees, we will invoke this result in order to prove the lower bound in (II).

\section{$d$-ary plane trees}\label{sec:d-ary}

\subsection{Contiguous containment}\label{subsec:d-ary-contiguous}

In the case of contiguous containment for $d$-ary plane trees, we obtain the exact size of the smallest $k$-supertree.  The upper bound comes from an explicit construction, and the lower bound comes from considering the family of paths.

\begin{theorem}\label{thm:d-ary-contiguous}
For all integers $d \geq 2$ and $k \geq 1$, we have $N_{d\ary}^{\con}(k)=d^{k-1}+k-1$.
\end{theorem}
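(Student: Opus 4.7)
The plan is to prove both bounds by focusing on the $d^{k-1}$ path (``chain'') patterns among the $k$-vertex $d$-ary plane trees. These are the patterns in which every non-leaf vertex has exactly one nonempty subtree, so they are naturally indexed by type sequences in $[d]^{k-1}$. Every non-chain $k$-vertex $d$-ary plane tree has height at most $k-2$ (since the only $k$-vertex $d$-ary plane tree of height $k-1$ is a chain), so non-chain patterns can be handled separately by a perfect-tree argument. The case $k=1$ is trivial, so throughout I assume $k\geq 2$.

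For the lower bound, let $\mathcal T$ be any contiguous $k$-universal $d$-ary plane tree. A contiguous embedding of the chain pattern with type sequence $(s_1,\ldots,s_{k-1})$ is precisely a downward path $u_0,u_1,\ldots,u_{k-1}$ in $\mathcal T$ with $u_i$ the $s_i$-th child of $u_{i-1}$, since the single-subtree condition at each level is automatic for the chosen vertex set. Each such downward path is uniquely determined by its endpoint $u_{k-1}$, and different type sequences correspond to paths with different sequences of edge types, so the $d^{k-1}$ distinct chain patterns require $d^{k-1}$ distinct paths and hence $d^{k-1}$ distinct vertices at depth at least $k-1$ in $\mathcal T$. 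Any such deep vertex has $k-1$ ancestors at depths $0,\ldots,k-2$, contributing at least $k-1$ further vertices, so $|\mathcal T|\geq d^{k-1}+k-1$.

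For the upper bound, fix any $t\in[d]$ and construct $\mathcal T$ as follows: take a spine $v_0,\ldots,v_{k-2}$ in which each $v_i$ with $i<k-2$ has the single child $v_{i+1}$ in its $t$-slot, and attach to $v_{k-2}$ a leaf $w$ in its $t$-slot together with a copy of $P_{k-2}^{(d)}$ in each of the $d-1$ non-$t$ slots. Using $|P_{k-2}^{(d)}|=\tfrac{d^{k-1}-1}{d-1}$ gives $|\mathcal T|=(k-1)+1+(d-1)\cdot\tfrac{d^{k-1}-1}{d-1}=d^{k-1}+k-1$. If $T$ is not a chain, then $T$ has height at most $k-2$ and embeds contiguously in any copy of $P_{k-2}^{(d)}$ via the easy fact (provable by induction on $h$) that $P_h^{(d)}$ contains every $d$-ary plane tree of height at most $h$. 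If $T$ is the chain with type sequence $(s_1,\ldots,s_{k-1})$, let $\ell\in\{0,1,\ldots,k-1\}$ be the length of the maximal prefix of $t$'s: when $\ell=k-1$ embed $T$ along $v_0,v_1,\ldots,v_{k-2},w$; when $\ell<k-1$, map the first $\ell+1$ vertices of $T$ to $v_{k-2-\ell},v_{k-1-\ell},\ldots,v_{k-2}$ along the spine, and map the remaining $k-\ell-1$ vertices to a downward path of depth $k-\ell-2$ inside the $P_{k-2}^{(d)}$ attached in the $s_{\ell+1}$-slot (feasible because $k-\ell-2\leq k-2$). The main routine obstacle is the chain-case bookkeeping, in particular verifying that each selected spine vertex appears as a single-child vertex in the resulting subset (automatic, since each spine vertex has only one child in $\mathcal T$).
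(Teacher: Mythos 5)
Your proposal is correct and follows essentially the same approach as the paper: the lower bound is the same chain-counting argument (each of the $d^{k-1}$ paths forces a distinct vertex at depth at least $k-1$, plus $k-1$ ancestors), and the upper bound is a spine of length $k-1$ with $d-1$ copies of $P_{k-2}^{(d)}$ hanging off the last vertex plus one extra vertex. The only (cosmetic) difference is where the one extra vertex goes: the paper tucks it into slot $1$ of the leftmost leaf of the perfect tree in slot $2$ of $v$, forcing a two-subcase analysis for chains, while you place $w$ directly in the $t$-slot of $v_{k-2}$, which makes the all-$t$ chain case a one-liner and slightly streamlines the prefix bookkeeping for the other chains.
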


\begin{proof}
We first show that $d^{k-1}+k-1$ is a lower bound for the size of a $k$-supertree. Let $\bf T$ be a contiguous $k$-universal $d$-ary plane tree.  Let $T_1,\ldots,T_{d^{k-1}}$ be the $d$-ary plane trees on $k$ vertices in which each nonleaf vertex has exactly one child (i.e., the $d$-ary plane trees that are paths on $k$ vertices).  For each $i\in\{1,\ldots,d^{k-1}\}$, there is a contiguous embedding $T_i^{\ast}$ of $T_i$ in $\bf T$.  Let $v_i^{\ast}$ denote the vertex in $\bf T$ that corresponds to the unique leaf of $T_i$ under this embedding. Starting at $v_i^*$ and tracing up $k-1$ edges, we immediately recover all of the edges of $T_i^*$, so the location of $v_i^{\ast}$ in $\bf T$ completely determines the isomorphism class of $T_i^{\ast}$. Because the trees $T_1^*,\ldots,T_{d^{k-1}}^*$ are pairwise nonisomorphic, the vertices $v_1^*,\ldots,v_{d^{k-1}}^*$ are pairwise distinct.  Thus, $\bf T$ contains at least $d^{k-1}$ vertices at depth at least $k-1$. Since $\bf T$ contains vertices at depth $k-1$, it must also contain at least one vertex at each depth $j$ for $0 \leq j \leq k-2$. This gives at least $k-1$ additional vertices, so $\bf T$ contains at least $d^{k-1}+k-1$ vertices, as desired.

We now construct a contiguous $k$-universal $d$-ary plane tree $\Delta_d(k)$ on exactly $d^{k-1}+k-1$ vertices. First, the tree $\Delta_d(1)$ consists of a single vertex. Now, consider $k\geq 2$. To construct $\Delta_d(k)$, first consider the $d$-ary plane tree that is a path on $k-1$ vertices in which every edge is of type $1$. Let $v$ denote the unique leaf of this path.  For each $2 \leq i \leq d$, attach a copy of the perfect tree $P_{k-2}^{(d)}$ in the $i^\text{th}$ subtree of $v$.  (Recall the definition of perfect trees from the introduction.)  Note that each of these $d-1$ added trees contains exactly $1+d+\cdots+d^{k-2}$ vertices, which means that together they have $d^{k-1}-1$ vertices in total.  Next, consider the leftmost leaf in the copy of $P_{k-2}^{(d)}$ that is sitting in the second subtree of $v$.  Add one more vertex in the first subtree of this leaf.  The resulting tree $\Delta_d(k)$ has the desired number of vertices.  See Figure \ref{Fig1} for an example.

\begin{figure}[h]
\begin{center} 
\includegraphics[width=0.65\linewidth]{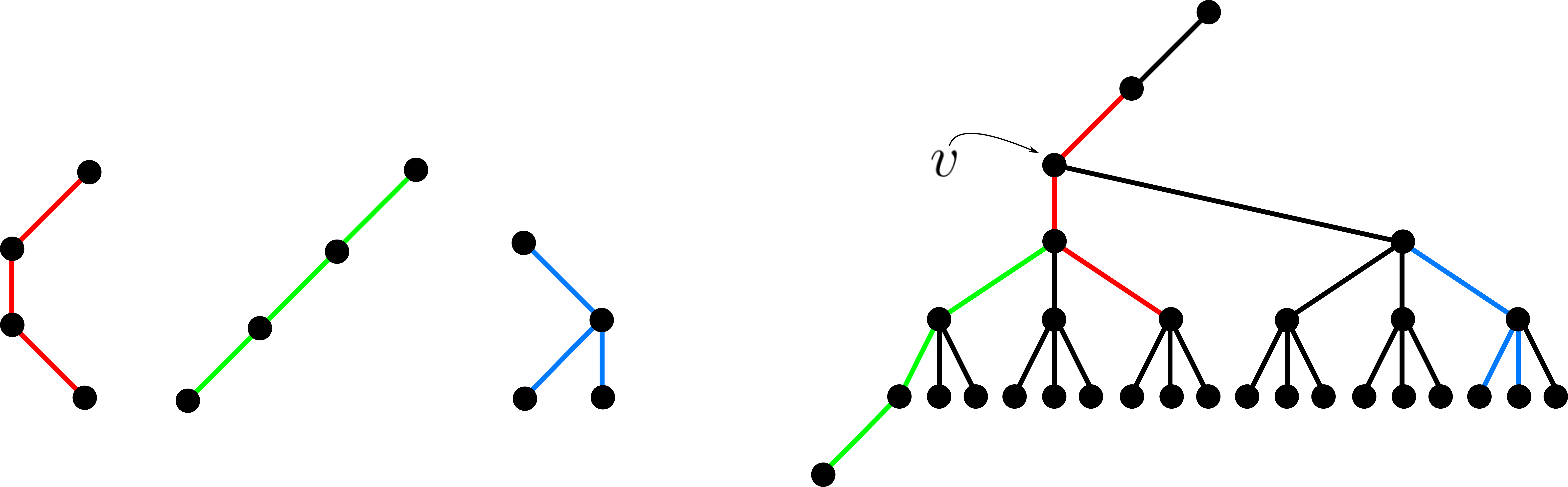}
\end{center}
\caption{The tree $\Delta_3(4)$ is depicted on the right. This tree contiguously contains all $3$-ary plane trees on $4$ vertices, three of which are shown on left.}\label{Fig1}
\end{figure}

Finally, we show that $\Delta_d(k)$ is in fact a $k$-supertree.  Fix any $d$-ary plane tree $T$ with $k$ vertices.  If $T$ is not a path, then it has height at most $k-2$, so it fits into one of the copies of $P_{k-2}^{(d)}$.  If $T$ is the path whose edges are all of type $1$, then we can embed $T$ in $\Delta_d(k)$ by mapping the root of $T$ to the root of the second subtree (i.e., the leftmost nonempty subtree) of $v$. Now, suppose $T$ is a path in which at least one edge is not of type $1$. Let $m$ be the smallest element of $\{1,\ldots,k-1\}$ such that the $m^\text{th}$ edge from the top of $T$ is not of type $1$. We can embed $T$ into $\Delta_d(k)$ by mapping the unique vertex in $T$ of depth $m-1$ to $v$.  This exhausts all cases and shows that $\Delta_d(k)$ is $k$-universal.
\end{proof}

\subsection{Noncontiguous containment}\label{subsec:d-ary-noncontiguous}

\subsubsection{Lower bounds}\label{subsubsec:noncontiguous-lower}

Recall that $\eta_2=1$ and $\eta_d=\frac{1}{2}$ for all $d\geq 3$. In this subsection we will prove the following theorem.

\begin{theorem}\label{Thm3}
For all integers $d \geq 2$ and $k\geq 1$, we have \[N_{d\ary}^{\non}(k)\geq \eta_d\left((k+1)\left\lceil\log_2(k+1)\right\rceil-2^{\left\lceil\log_2(k+1)\right\rceil}+1\right).\]
\end{theorem}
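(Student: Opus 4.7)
The plan is to reduce this lower bound to a theorem of Albert, Engen, Pantone, and Vatter (AEPV) on the minimum length of a permutation containing every layered permutation of a given length, via the classical bijection between binary plane trees and $231$-avoiding permutations. Write $L(k) := (k+1)\lceil \log_2(k+1)\rceil - 2^{\lceil \log_2(k+1)\rceil} + 1$; the AEPV result states that any permutation containing every length-$k$ layered permutation as a pattern has length at least $L(k)$. I would first establish the case $d = 2$ directly (where $\eta_d = 1$), then deduce the case $d \geq 3$ by a binarization argument that costs a factor of $2$ (giving $\eta_d = \tfrac{1}{2}$).

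For $d = 2$, let $\phi$ denote the bijection that sends a $231$-avoiding permutation $\sigma$ to the binary plane tree whose root is the maximum entry of $\sigma$, whose left subtree is $\phi^{-1}$ applied to the prefix before the maximum, and whose right subtree is $\phi^{-1}$ applied to the suffix after. The first key step is to show that noncontiguous containment of binary plane trees implies pattern containment of their images under $\phi$: if $B_1$ is noncontiguously contained in $B_2$, then $\phi(B_1)$ is a pattern of $\phi(B_2)$. For this it suffices to check that a single legal edge contraction in a binary plane tree $B_2$ produces a tree whose image under $\phi$ is obtained by deleting exactly one entry from $\phi(B_2)$, and then to iterate. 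The second key step is to observe that every layered permutation of length $k$ is $231$-avoiding: a short case check based on whether the three entries of a hypothetical $231$ pattern lie within a single decreasing layer or cross multiple layers rules it out.

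With these in hand, suppose $\mathbf T$ is a noncontiguous $k$-universal binary plane tree with $|\mathbf T| = N$. For every layered permutation $\tau$ of length $k$, the tree $\phi^{-1}(\tau)$ has $k$ vertices and so is noncontiguously contained in $\mathbf T$; by the first step, $\tau$ is a pattern of $\phi(\mathbf T)$. Thus $\phi(\mathbf T)$ is a permutation of length $N$ containing every length-$k$ layered permutation as a pattern, and the AEPV bound gives $N \geq L(k)$, completing the $d = 2$ case.

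For $d \geq 3$, the plan is to construct from a noncontiguous $k$-universal $d$-ary plane tree $\mathbf T$ a binary plane tree $\widetilde{\mathbf T}$ with $|\widetilde{\mathbf T}| \leq 2|\mathbf T|$ that is itself noncontiguously $k$-universal for binary plane trees; combined with the $d = 2$ case this yields $|\mathbf T| \geq \tfrac{1}{2} L(k) = \eta_d L(k)$, as required. The main obstacle is this construction. A naive attempt to take the subforest of $\mathbf T$ consisting of edges of types $\{1, d\}$ fails because a legal $d$-ary contraction sequence realizing a $\{1, d\}$-typed embedding of a binary plane tree $B$ inside $\mathbf T$ may pass through intermediate contractions of edges of other types, so the resulting binary embedding is not confined to the $\{1, d\}$-skeleton. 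Overcoming this requires a more careful binarization in which the components of the extracted skeleton are stitched into a single binary plane tree using at most $|\mathbf T|$ additional connector vertices, together with a verification that the noncontiguous containment of every binary plane tree on $k$ vertices inside $\mathbf T$ descends to this enlarged binary structure. The legality condition for $d$-ary contractions, which forces non-$i$ siblings to lie outside the type range of the contracted child's subtrees, should provide the rigidity needed to push this verification through.
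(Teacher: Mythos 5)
Your reduction of the $d=2$ case to the Albert--Engen--Pantone--Vatter lower bound on layered-pattern-universal permutations is correct and is essentially the paper's argument: the bijection you describe (root at the maximum entry of a $231$-avoiding permutation, recursing on the prefix and suffix) is the inverse of the paper's map $\psi$, a single legal contraction corresponds to deleting one entry, and layered permutations avoid $231$, so a noncontiguous $k$-universal binary plane tree maps to a permutation containing every layered pattern of length $k$.

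The gap is in the $d\geq 3$ reduction. You correctly observe that restricting to the $\{1,d\}$-typed subforest of $\mathbf T$ does not work, but the ``more careful binarization'' you describe --- extracting a skeleton and stitching its components back together with connector vertices --- is not carried out, and I do not see how to make it work: once edges of types other than $1,d$ have been discarded, it is unclear why containment of an arbitrary binary plane tree should survive in the reassembled structure. The paper's construction (Proposition~\ref{Prop1}) goes in the \emph{opposite} direction: instead of extracting a subforest from $\mathbf T$, it \emph{enlarges} $\mathbf T$. Every vertex $v$ of $\mathbf T$ with $m\geq 3$ children (in positions $\vec t=(t_1,\dots,t_m)$) is replaced by a binary path gadget $J_{\vec t}$ on $m-1$ internal vertices, and the $m$ original subtrees are hung off its $m$ leaves in the same left-to-right order; this adds $m-2$ vertices per such $v$. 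The resulting $\widetilde{\mathbf T}$ has every vertex with at most $2$ children, and --- crucially --- contracting the added path edges from top to bottom legally recovers $\mathbf T$, so $\widetilde{\mathbf T}$ noncontiguously contains $\mathbf T$ and is therefore at least as universal. One then reinterprets $\widetilde{\mathbf T}$ as a binary plane tree $\widetilde{\mathbf T}'$. The size bound comes from a short double count: writing $f_m$ for the number of vertices of $\mathbf T$ with exactly $m$ children and $\alpha=|\mathbf T|$, one has $\sum_m f_m=\alpha$ and $\sum_m m f_m=\alpha-1$, and the enlarged tree has
\[
|\widetilde{\mathbf T}| \;=\; \alpha+\sum_{m\geq 3}(m-2)f_m \;=\; 2f_0+f_1-1 \;<\; 2\alpha,
\]
which gives exactly the factor $\tfrac{1}{2}$ that produces $\eta_d$. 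Without this (or an equivalent) concrete construction and size estimate, your $d\geq 3$ case is not established.
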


The first step is to show that it suffices to consider the specific case in which $d=2$. 

\begin{proposition}\label{Prop1}
For all integers $d\geq 3$ and $k\geq 1$, we have \[N_{d\ary}^{\non}(k)>\frac{1}{2}N_{2\ary}^{\non}(k).\]
\end{proposition}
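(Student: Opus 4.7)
The plan is to construct, for any noncontiguous $k$-universal $d$-ary plane tree $\mathcal{T}$ with $n$ vertices, a noncontiguous $k$-universal binary plane tree $\beta(\mathcal{T})$ of size strictly less than $2n$. Applied to a $\mathcal{T}$ of minimum size $n = N_{d\ary}^{\non}(k)$, this gives $N_{2\ary}^{\non}(k) < 2N_{d\ary}^{\non}(k)$, which is equivalent to the claim.

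I would build the binary tree $\beta(\mathcal{T})$ by replacing each vertex $u \in V(\mathcal{T})$ with a binary caterpillar encoding its children. If $u$ has children $c_{i_1}, \ldots, c_{i_m}$ at positions $i_1 < \cdots < i_m$, then for $m = 0$ I keep $u$ as a leaf; for $m = 1$ I attach the single child on the left when $i_1 \leq \lceil d/2 \rceil$ and on the right otherwise (the two cases are nonvacuous precisely because $d \geq 3$ gives $\lceil d/2 \rceil < d$); and for $m \geq 2$ I use a right-spine of $m-1$ binary vertices $u = v_1, \ldots, v_{m-1}$, with $\beta(\mathcal{T}_{c_{i_j}})$ as the left subtree of $v_j$ for $1 \leq j \leq m-1$, $v_j$'s right subtree equal to $v_{j+1}$ for $j < m-1$, and $v_{m-1}$'s right subtree equal to $\beta(\mathcal{T}_{c_{i_m}})$. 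Summing $\max(m_u - 1, 1)$ over all $u \in V(\mathcal{T})$ and using $\sum_u m_u = n-1$ gives $|\beta(\mathcal{T})| \leq 2n - 1$.

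To argue that $\beta(\mathcal{T})$ is noncontiguously $k$-universal among binary plane trees, I fix an arbitrary binary plane tree $T$ on $k$ vertices and view it as a $d$-ary plane tree $\phi(T)$ by placing its left children at position $1$ and its right children at position $d$. A direct check on each possible arity confirms $\beta(\phi(T)) = T$. Since $\mathcal{T}$ noncontiguously contains $\phi(T)$, there is a sequence of legal $d$-ary edge contractions $\mathcal{T} = \mathcal{T}_0 \to \mathcal{T}_1 \to \cdots \to \mathcal{T}_N = \phi(T)$, and the goal is to lift this to a sequence of legal binary edge contractions $\beta(\mathcal{T}) \to \cdots \to \beta(\phi(T)) = T$ in $\beta(\mathcal{T})$.

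The main obstacle is verifying this lift rigorously. Each legal $d$-ary contraction of $u \to c_{i_r}$ calls for reorganizing the caterpillars of $u$ and $c_{i_r}$ in $\beta(\mathcal{T}_s)$ into the single merged caterpillar of the new vertex $w$ in $\beta(\mathcal{T}_{s+1})$; the side-condition for legality (that $\chi(u) \setminus \{i_r\}$ lies entirely below $\min \chi(c_{i_r})$ or entirely above $\max \chi(c_{i_r})$) pins down the left-to-right order in which $w$'s children appear along its caterpillar. I expect that a careful case analysis on $m_u$, $m_{c_{i_r}}$, the index $r$, and the side of the legality condition produces a short sequence of legal binary contractions (mostly leaf-like contractions inside $\beta(c_{i_r})$ followed by merges of adjacent auxiliary spine vertices) that realizes this reorganization; the convention placing the first child on the left and the last on the right of each caterpillar is designed precisely so that these binary contractions are legal.
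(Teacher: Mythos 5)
Your construction points in the opposite direction from the paper's. The paper \emph{blows up} inside the class of $d$-ary plane trees: each vertex of $\mathcal T$ with $m\geq 3$ children is replaced by a caterpillar $J_{\vec t}$ (a type-$1$ path with the subtrees reattached along it), yielding a $d$-ary plane tree $\widetilde{\mathcal T}$ with at most two children per vertex. The point of that caterpillar is that its type-$1$ spine can be contracted legally from top to bottom, so $\widetilde{\mathcal T}$ noncontiguously contains $\mathcal T$ and is therefore automatically $k$-universal; one then simply reads $\widetilde{\mathcal T}$ as a binary plane tree $\widetilde{\mathcal T}'$. No lifting of contraction sequences is required, and the vertex count $2f_0+f_1-1<2\alpha$ is exactly the one you also obtain. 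You instead project $\mathcal T$ \emph{down} to a binary tree $\beta(\mathcal T)$ and then try to lift a $d$-ary contraction sequence $\mathcal T\to\cdots\to\phi(T)$ to a sequence of legal binary contractions inside $\beta(\mathcal T)$.

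That lift is where the gap is, and it is not merely a matter of ``careful case analysis'': the step-by-step lift fails outright. Take $d=4$, let $\mathcal T_s$ have root $u$ with children at positions $2$ and $4$, where the child at position $2$ has two children at positions $1$ and $3$, and all other vertices are leaves. Contracting the edge from $u$ to the position-$2$ child is legal (since $4>3=\max\chi$), giving $\mathcal T_{s+1}$ whose root has three children at positions $1,3,4$. But $\beta(\mathcal T_s)$ and $\beta(\mathcal T_{s+1})$ are non-isomorphic binary plane trees on the same number ($5$) of vertices: $\beta(\mathcal T_s)$ has the shape of a cherry with a cherry on the left, while $\beta(\mathcal T_{s+1})$ has a cherry on the right (the two parenthesizations $((\cdot\,\cdot)\,\cdot)$ and $(\cdot\,(\cdot\,\cdot))$). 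Since a legal contraction strictly decreases size, $\beta(\mathcal T_s)$ cannot noncontiguously contain $\beta(\mathcal T_{s+1})$, so there is no sequence of legal binary contractions realizing this step. The underlying cause is the orientation of your caterpillar: placing the leftmost child at the top of a right-spine means that a legal $d$-ary contraction whose absorbed grandchildren precede the surviving siblings of the contracted vertex reorders the caterpillar in a way that only a rotation, not a contraction, can achieve. This does not by itself refute the global claim that $\beta(\mathcal T)$ contains $T$ (the intermediate trees needn't be hit), but it does show that the argument you sketch cannot be completed as stated; to salvage the downward-projection idea you would need an essentially different justification. The paper's upward blow-up avoids this problem entirely, because universality of $\widetilde{\mathcal T}$ is inherited from $\mathcal T$ for free.
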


\begin{proof}
Fix $d\geq 3$, and consider the $d$-ary plane tree path on $m-1$ vertices in which every edge has type $1$. Let $\vec{t}=(t_1,\ldots, t_m)$ be an $m$-tuple of integers satisfying $1\leq t_1<\cdots<t_m\leq d$.  For $2 \leq i \leq m$, attach a single child via an edge of type $t_i$ to the $(i-1)^{\text{th}}$ vertex of the path, counting from the bottom. Then attach a single child via an edge of type $t_1$ to the bottom vertex of this original path.  Call the resulting $d$-ary plane tree $J_{\vec{t}}$.

Let $\mathcal T = \mathcal T_0$ be a $d$-ary plane tree. We will transform $\mathcal T$ into a tree $\widetilde{\mathcal T}$ in which every vertex has at most two children. Suppose $\mathcal T$ has a vertex $v_1$ with at least $3$ children, say exactly $m_1$ children in subtrees of type $\vec{t}_1=(t_{1,1},\cdots,t_{1,m_1})$.  Replace $v_1$ with a copy of $J_{\vec{t}_1}$ in the following manner. Detach the subtrees of $v_1$, and glue $J_{\vec{t}_1}$ to $v_1$. Then glue the (detached) $i^\text{th}$ nonempty subtree (counted from the left) of $v_1$ to the $i^\text{th}$ leaf (again counted from the left) of the copy of $J_{\vec{t}_1}$. Call this tree $\mathcal T_1$. Choose another vertex $v_2$ that has $m_2\geq 3$ children in subtrees of type $\vec{t}_2$, and replace it in the same fashion with a copy of $J_{\vec{t}_2}$ to obtain $\mathcal T_2$. Continue this process until reaching a tree $\widetilde{\mathcal T} = \mathcal T_r$ in which each vertex has at most $2$ children.

Note that if $1\le i\le r$, then $\mathcal T_i$ noncontiguously contains $\mathcal T_{i-1}$ because we can legally contract the edges of the original path in the added $J_{\vec{t}_i}$ from top to bottom.  Iterating this procedure shows that there is a sequence of legal contractions that begins with $\widetilde{\mathcal{T}} = \mathcal T_r$ and ends with $\mathcal T_0 = \mathcal T$.

We can naturally associate $\widetilde{\mathcal T}$ with a \emph{binary} plane tree $\widetilde{\mathcal T}'$. If there is an only child of type $1$ in $\widetilde{\mathcal T}$, it becomes an only child of type $1$ in $\widetilde{\mathcal T}'$. If there is an only child of type other than $1$ in $\widetilde{\mathcal T}$, it becomes an only child of type $2$ in $\widetilde{\mathcal T}'$. See Figure \ref{Fig4} for an example when $d=4$. As proven above, our construction guarantees that $\widetilde{\mathcal T}$ noncontiguously contains $\mathcal T$, so it also noncontiguously contains every $d$-ary plane tree that $\mathcal T$ noncontiguously contains.

\begin{figure}[h]
\begin{center}
\includegraphics[width=.76\linewidth]{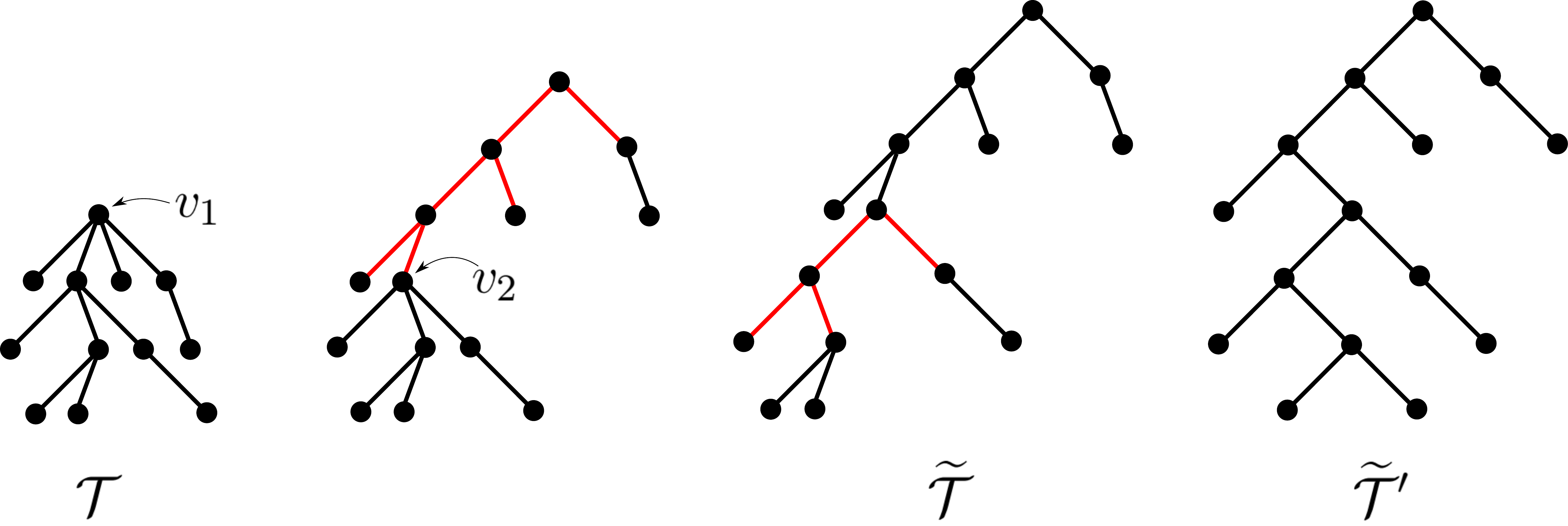}
\caption{Transforming $\mathcal T$ into $\widetilde{\mathcal T}'$. We have used the color red to indicate the edges of the inserted copy of $J_{m_i}$ added in the $i^\text{th}$ step.}
\label{Fig4}
\end{center}  
\end{figure}

Let $\bf T$ be a noncontiguous $k$-universal $d$-ary plane tree with $\alpha=N_{d\ary}^{\non}(k)$ vertices. The tree $\widetilde{\bf T}$ obtained via the above construction is also a noncontiguous $k$-universal $d$-ary plane tree, so $\widetilde{\bf T}'$ is a noncontiguous $k$-universal binary plane tree. The trees $\widetilde{\bf T}$ and $\widetilde{\bf T}'$ have the same number of vertices, say $\beta$. We know that $\beta\geq N_{2\ary}^{\non}(k)$. We will show that $\beta<2\alpha$, establishing the desired result. 

Let $f_r$ denote the number of vertices in $\bf T$ with exactly $r$ children. We obtained $\widetilde{\bf T}$ from $\bf T$ by substituting a copy of $J_m$ for each vertex $v$ of $\bf T$ with $m\geq 3$ children. Note that each such substitution increased the number of vertices in the tree by $m-2$. Thus, $\beta=\alpha+\sum_{m=3}^d(m-2)f_m$. We know that $\sum_{m=0}^d f_m=\alpha$. Furthermore, counting the $\alpha-1$ edges in $\bf T$ according to the number of children of their parent vertices gives $\alpha-1=\sum_{m=0}^dmf_m$. Consequently,
\begin{align*}
    \beta &=\alpha+\sum_{m=3}^d(m-2)f_m =\alpha+2f_0+f_1+\sum_{m=0}^d(m-2)f_m\\
     &=\alpha+2f_0+f_1+(\alpha-1)-2\alpha =2f_0+f_1-1<2\sum_{m=0}^df_m=2\alpha. \qedhere
\end{align*}
\end{proof}

For the proof of Theorem \ref{Thm3}, it now remains only to show that 
\begin{equation}\label{Eq3}N_{2\text{-ary}}^{\text{non}}(k)\geq (k+1)\left\lceil\log_2(k+1)\right\rceil-2^{\left\lceil\log_2(k+1)\right\rceil}+1.
\end{equation}
Let us first establish some terminology and notation concerning labeled trees and tree traversals. Let $\PT_n^{(2)}$ denote the set of binary plane trees with $n$ vertices. A \emph{decreasing binary plane tree} is a binary plane tree whose vertices are labeled with distinct positive integers so that the label of each nonroot vertex is smaller than the label of its parent. Let $\DPT_n^{(2)}$ be the set of decreasing binary plane trees with $n$ vertices in which the labels form the set $[n]$. We can read the labels of a decreasing binary plane tree in \emph{in-order} by first reading the labels of the left subtree of the root in in-order, then reading the label of the root, and finally reading the labels of the right subtree of the root in in-order. Let $I(\Upsilon)$ denote the in-order reading of the decreasing binary plane tree $\Upsilon$. The map $I:\DPT_n^{(2)}\to S_n$ is a bijection \cite[Chapter 8]{Bona}. Alternatively, we can read the labels of a decreasing binary plane tree in \emph{postorder} by first reading the labels of the left subtree of the root in postorder, then reading the labels of the right subtree of the root in postorder, and finally reading the label of the root. 

For each unlabeled tree $T\in\PT_n^{(2)}$, there is a unique way to label the vertices of $T$ so that the resulting labeled tree $\omega(T)\in\DPT_n^{(2)}$ has postorder reading $123\cdots n$ (the increasing permutation). This gives us a map $\omega:\PT_n^{(2)}\to\DPT_n^{(2)}$. Let $\psi(T)=I(\omega(T))$. It is not difficult to check that the permutation $\psi(T)$ avoids the pattern $231$. In fact, we have the following useful proposition.

\begin{proposition}\label{Prop2}
The map $\psi$ is a bijection from the set of binary plane trees with $n$ vertices to the set of $231$-avoiding permutations in $S_n$. If $\mathcal T$ is a binary plane tree that noncontiguously contains the binary plane tree $T$, then the permutation $\psi(\mathcal T)$ contains the pattern $\psi(T)$. 
\end{proposition}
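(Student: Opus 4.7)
\textbf{Proof plan for Proposition~\ref{Prop2}.}

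Both parts rest on the same recursive description of $\psi$: if $T$ has root $r$, left subtree $L$ of size $a$, and right subtree $R$ of size $b$, then the postorder traversal visits $L$ first (assigning labels $\{1,\ldots,a\}$), then $R$ (assigning labels $\{a+1,\ldots,a+b\}$), then $r$ (label $n=a+b+1$). Reading in-order gives
\[
\psi(T) = \psi(L)\cdot n \cdot \bigl(\psi(R)+a\bigr),
\]
where $\psi(R)+a$ means adding $a$ to every entry. I would use this recursion to drive both parts.

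For the bijectivity claim, I would induct on $n$. The recursion immediately shows $\psi(T)$ avoids $231$: any $231$-pattern at positions $i<j<k$ has its middle entry the largest of the three, and the pattern cannot lie entirely within $\psi(L)$ or within $\psi(R)+a$ by the inductive hypothesis. If it straddles the central entry $n$, then since left-segment values lie in $[a]$ while right-segment values lie in $\{a+1,\ldots,n-1\}$, the required inequalities $\sigma_j>\sigma_i>\sigma_k$ fail (any entry left of $n$ is smaller than any entry right of $n$). Conversely, given a $231$-avoiding $\sigma\in S_n$, let $p$ be the position of the entry $n$; $231$-avoidance forces every entry in positions $<p$ to be smaller than every entry in positions $>p$, so $\sigma=\sigma_L\cdot n\cdot \sigma_R$ with $\sigma_L\in S_{p-1}(231)$ and $\sigma_R-(p-1)\in S_{n-p}(231)$. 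Recursing on $\sigma_L$ and $\sigma_R-(p-1)$ and gluing the resulting subtrees under a new root yields the inverse of $\psi$.

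For the pattern-containment claim, transitivity of permutation pattern containment lets me reduce to a single legal contraction $\mathcal T' = \mathcal T/e$ along an edge $e=(u,v)$. Up to left-right symmetry (say $e$ has type $1$, so $v=L(u)$), the legality condition splits into two sub-cases: (i) $u$ has no right child (so $v$ is $u$'s only child), or (ii) $u$ has a right child but $v$ has no right child. In sub-case (i) the merged vertex $u'$ inherits exactly $v$'s children, so I identify $u'$ with $v$ and designate $u$ as the ``removed'' vertex; in sub-case (ii), $u'$ has $L(v)$ as its left subtree and $u$'s old right subtree as its right subtree, so I identify $u'$ with $u$ and designate $v$ as removed. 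A direct local inspection of these configurations shows that, under the respective identification, both the in-order and postorder traversals of $\mathcal T'$ coincide with those of $\mathcal T$ after simply deleting the removed vertex. Consequently, the subsequence of $\psi(\mathcal T)$ obtained by omitting the entry at the removed vertex's in-order position standardizes to $\psi(\mathcal T')$, exhibiting $\psi(\mathcal T')$ as a pattern in $\psi(\mathcal T)$. Iterating along the contraction sequence from $\mathcal T$ down to $T$ completes the proof. The main obstacle is the case-wise book-keeping: one must simultaneously verify the in-order and postorder compatibilities, and correctly choose the ``removed'' vertex and the identification of $u'$ in each of the four configurations (sub-cases (i) and (ii) and their left-right symmetric analogues).
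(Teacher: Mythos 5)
Your proposal is correct, and it differs from the paper's argument in a couple of instructive ways.

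For the bijectivity claim, the paper's route is shorter but less constructive: it observes that $\psi=I\circ\omega$ is injective (a composition of injections) and then invokes the fact that the number of binary plane trees on $n$ vertices and the number of $231$-avoiders in $S_n$ are both the $n$th Catalan number. Your recursive decomposition $\psi(T)=\psi(L)\cdot n\cdot(\psi(R)+a)$ proves both that $\psi(T)$ avoids $231$ and that every $231$-avoider arises uniquely, without appealing to Catalan counting, and it also builds the inverse map explicitly. Both arguments are valid; yours is more self-contained and makes the structure visible, while the paper's is slicker once the Catalan enumeration is taken as known.

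For the pattern-containment claim, you and the paper use the same overall strategy: reduce to a single legal contraction and show that $\psi(\mathcal{T}/e)$ is obtained from $\psi(\mathcal{T})$ by deleting one entry and standardizing. The difference is in the bookkeeping. The paper asserts uniformly that for a type-$1$ edge one deletes the label $b$ of the lower endpoint, and for a type-$2$ edge the label $a$ of the upper endpoint. Your finer case split---removing $u$ in sub-case (i) (where $u$ has only the single child $v$) and removing $v$ in sub-case (ii) (where $u$ has a second child)---is actually the more accurate description. Indeed, in sub-case (i) for a type-$1$ edge, the entry one must delete is $a$ (the upper endpoint's label), not $b$: take $\mathcal{T}$ to be a root $u$ with a single left child $v$ whose single child is a right child $w$; then $\psi(\mathcal{T})=213$, $\psi(\mathcal{T}/e)=21$, and deleting $b=2$ gives $12$ while deleting $a=3$ gives $21$. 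The paper's conclusion that $\psi(\mathcal{T})$ contains $\psi(\mathcal{T}/e)$ is unaffected, since either deletion exhibits a pattern of $\psi(\mathcal{T})$; but your choice of the ``removed'' vertex is the one that makes the in-order and postorder traversals literally align with those of $\mathcal{T}/e$, which is what one wants for a clean proof. I verified the alignment you claim (for both traversals, in both sub-cases and their type-$2$ mirrors), and it holds.
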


\begin{proof}
The map $\psi$ is injective because $\omega$ and $I$ are injective. The first statement of the proposition now follows from the fact that the number of binary plane trees with $n$ vertices and the number of $231$-avoiding permutations in $S_n$ are both equal to the $n^\text{th}$ Catalan number.\footnote{The first statement of this proposition is not new; it is essentially equivalent to the fact that a permutation is $1$-stack-sortable if and only if it avoids $231$ (see one of the references \cite{Bona, DefantPolyurethane, DefantPostorder} for more details).}  

To prove the second statement, we need to understand the effect of legal edge contractions on the corresponding permutations. Let $\mathcal T\in\PT_n^{(2)}$ be a binary plane tree, and let $e$ be an edge of $\mathcal T$ that can be legally contracted. Let $a$ and $b$ be, respectively, the labels of the upper and lower endpoints of $e$ in $\omega(\mathcal T)$. Let $\mathcal T/e\in\PT_{n-1}^{(2)}$ denote the tree that is obtained by contracting the edge $e$ in $\mathcal T$. One can check that if $e$ is a type-$1$ edge, then $\psi(\mathcal T/e)$ is the permutation obtained by deleting the entry $b$ from $\psi(\mathcal T)$ and then normalizing to obtain a permutation in $S_{n-1}$. Similarly, if $e$ is a type-$2$ edge, then $\psi(\mathcal T/e)$ is the permutation obtained by deleting the entry $a$ from $\psi(\mathcal T)$ and then normalizing. In either case, $\psi(\mathcal T)$ contains $\psi(\mathcal T/e)$ as a pattern. If $\mathcal T$ noncontiguously contains a binary plane tree $T$ (meaning $T$ is obtained from $\mathcal T$ via a sequence of legal edge contractions), then $\psi(\mathcal T)$ contains $\psi(T)$ as a pattern. 
\end{proof}

Let us illustrate the proof of the second statement of Proposition \ref{Prop2} with an example. If \[T\text{ is }\begin{array}{l}
\includegraphics[height=2.6cm]{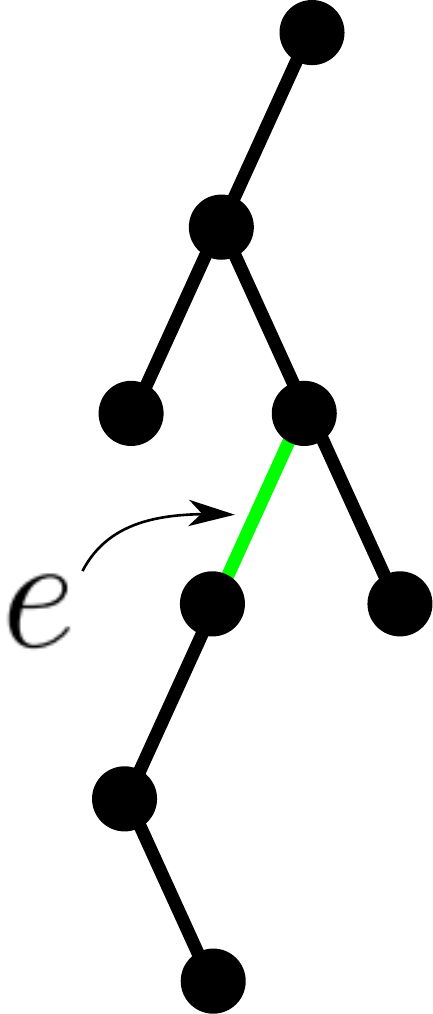}
\end{array},\text{ then }\omega(T)\text{ is }\begin{array}{l}
\includegraphics[height=2.6cm]{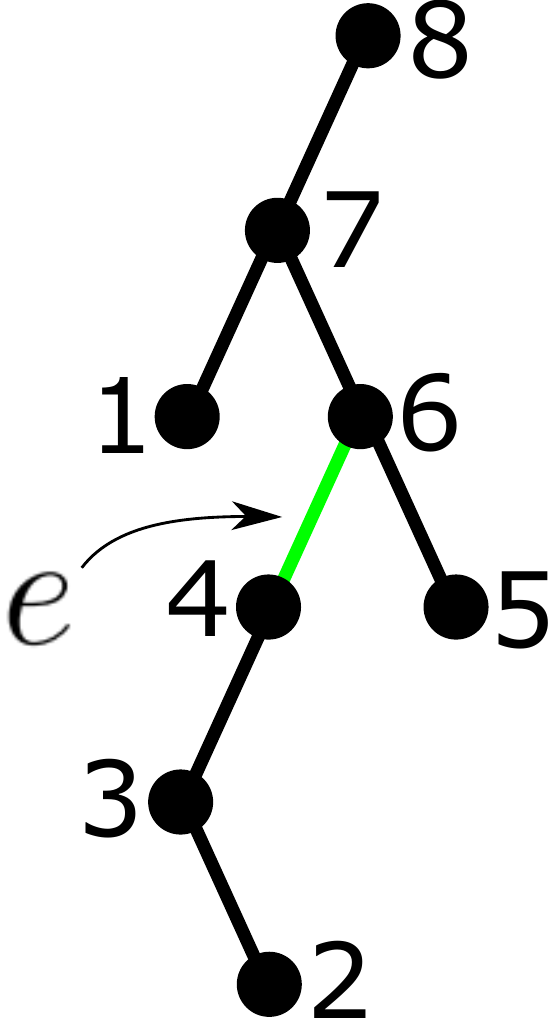}
\end{array},\text{ and }\psi(T)=I(\omega(T))=17324658.\] Contracting the edge labeled $e$, we find that \[T/e\text{ is }\begin{array}{l}
\includegraphics[height=2.2cm]{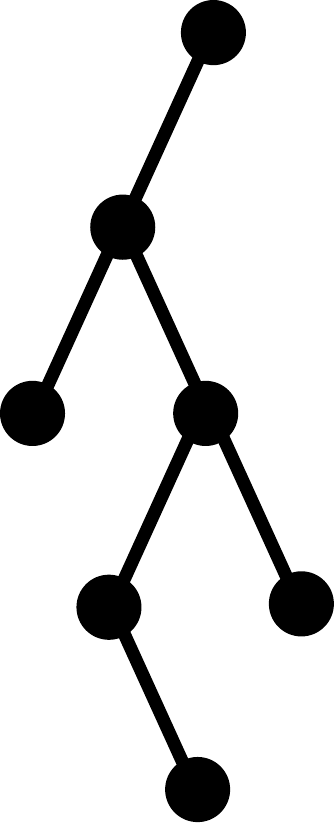}
\end{array},\text{ }\omega(T/e)\text{ is }\begin{array}{l}
\includegraphics[height=2.2cm]{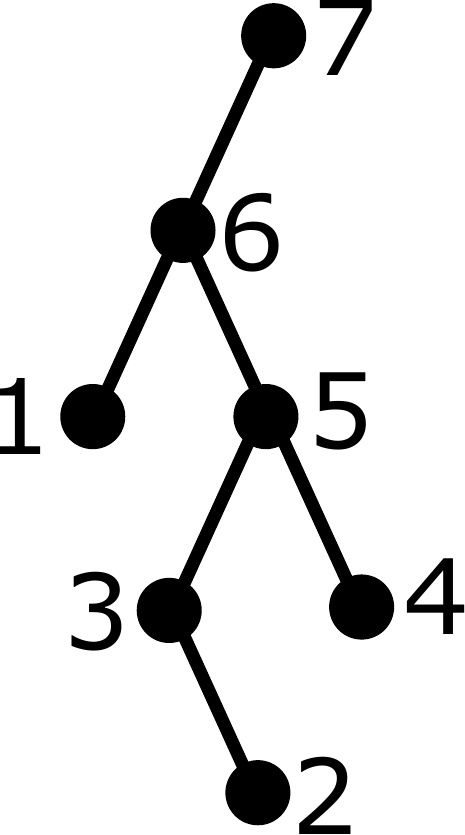}
\end{array},\text{ and }\psi(T/e)=I(\omega(T/e))=1632547.\] Note that since $e$ is a left edge, the permutation $\psi(T/e)=1632547$ is obtained by deleting the entry $b=4$ from the permutation $\psi(T)=17324658$ and then normalizing. 

We can finally deduce inequality \eqref{Eq3}.  Suppose $\mathcal T$ is a noncontiguous $k$-universal binary plane tree with $N_{2\text{-ary}}^{\text{non}}(k)$ vertices. Proposition \ref{Prop2} tells us that $\psi(\mathcal T)$ contains every $231$-avoiding permutation in $S_k$. A permutation is called \emph{layered} if it avoids both $231$ and $312$. Thus, $\psi(\mathcal T)$ is a permutation of length $N_{2\text{-ary}}^{\text{non}}(k)$ that contains all layered permutations in $S_k$. The authors of \cite{Albert} proved that the minimum size of a permutation that contains all layered permutations in $S_k$ is $(k+1)\left\lceil\log_2(k+1)\right\rceil-2^{\left\lceil\log_2(k+1)\right\rceil}+1$. This establishes \eqref{Eq3} and hence completes the proof of Theorem \ref{Thm3}.

\subsubsection{Upper bounds}\label{subsubsec:noncontiguous-upper}

For every $d\geq 2$ and $k\geq 1$, we now construct a noncontiguous $k$-universal $d$-ary plane tree $\xi_d(k)$.  The construction is natural but fairly intricate.  We begin by defining a few specific $d$-ary plane trees that will form the building blocks in our construction of $\xi_d(k)$.  The \textit{$d$-crescent} is the path on $d+1$ vertices in which the vertex at depth $i$ is connected to its parent by an edge of type $i$. Now, take three copies of the $d$-crescent. Remove the lowest vertex from the first of these $d$-crescents, and glue the remaining tree to the vertex of depth $1$ in the second crescent. Next, remove the root vertex from the third $d$-crescent, and glue the remaining tree to the root of the second $d$-crescent. We call the resulting tree the \textit{$d$-vertebra} and denote it by $V_d$.  Note that $V_d$ has exactly $3$ leaves, which we call the \textit{left}, \textit{center}, and \textit{right} leaves (in the obvious fashion).

\begin{figure}[h]
\begin{center} 
\includegraphics[height=4.5cm]{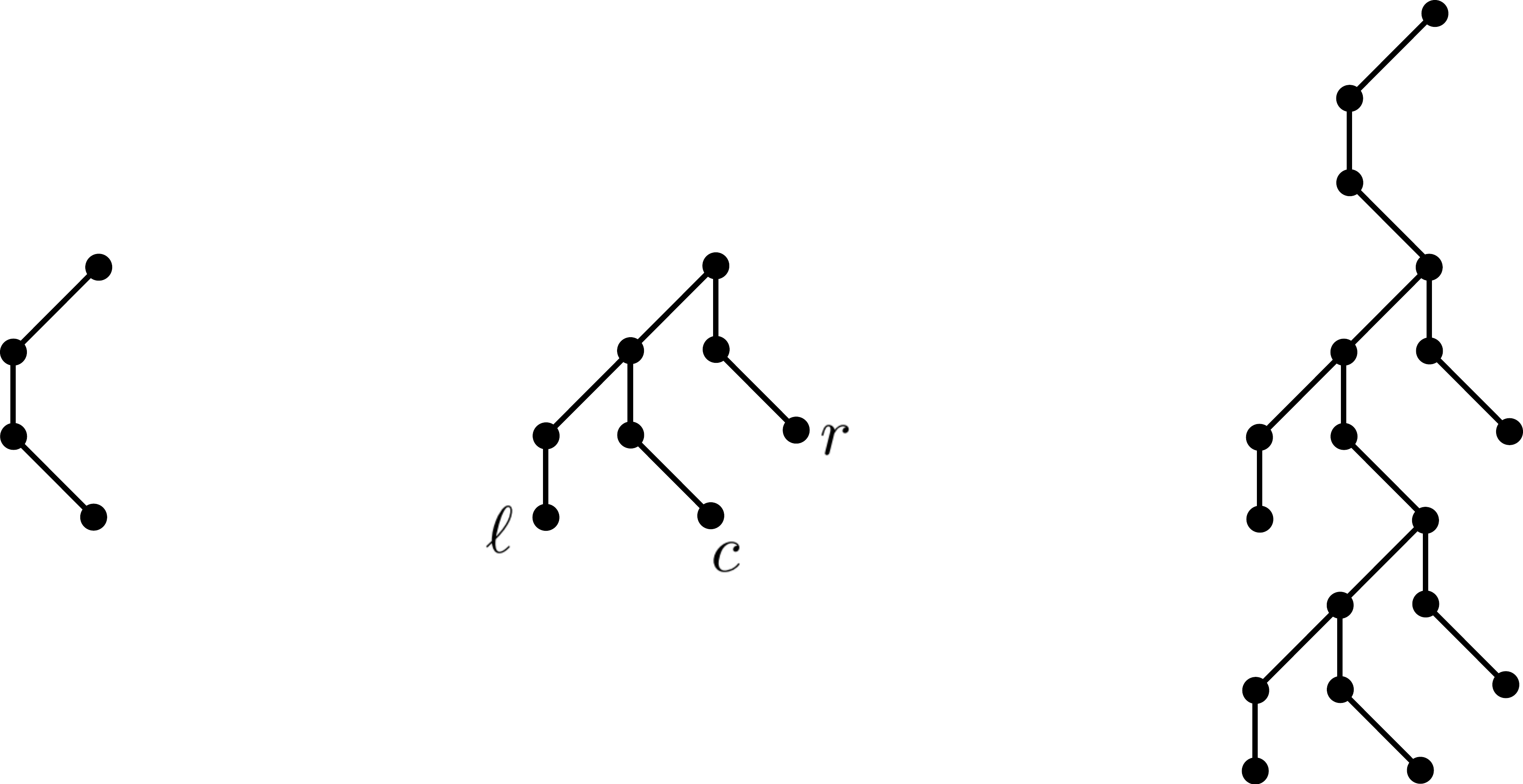}
\end{center}
\caption{From left to right: the $3$-crescent; the $3$-vertebra $V_3$, with the left, center, and right leaves labeled $\ell$, $c$, and $r$ (respectively); and the $2^{\text{nd}}$ $3$-spine.}\label{Fig2}
\end{figure}

For $m \geq 1$, we obtain the \textit{$m^{\textit{th}}$ $d$-spine} by consecutively gluing  $m$ copies of the $d$-vertebra $V_d$ under a single copy of the $d$-crescent: the first $V_d$ is glued to the single leaf of the $d$-crescent, and each subsequent $V_d$ is glued to the center leaf of the previous $V_d$.  We speak of the first, second, etc. $d$-vertebra beginning with the highest one.

At last, we recursively define the families $\xi_d(k)$.  We first describe the following base cases:
\begin{itemize}
    \item Let $\xi_d(1)$ consist of a single vertex.
    \item Let $\xi_d(2)$ be the $d$-crescent.
    \item Obtain $\xi_d(3)$ from a $d$-crescent by giving the leaf $d$ children (one in each position).
\end{itemize}
The construction for larger $k$ is recursive and differs for $d=2$ and $d>2$.  (The $d=2$ construction is a slight improvement on the $d>2$ construction.)  If $d=2$, then for $k \geq 4$, we obtain $\xi_2(k)$ from the $\left(\left\lfloor \frac{k}{2}\right\rfloor-1\right)^{\text{th}}$ $2$-spine as follows:
\begin{enumerate}
    \item For each $1 \leq i \leq \left\lfloor \frac{k}{2}\right\rfloor-2$, glue a copy of $\xi_2(i)$ to each of the left and right leaves of the $i^{\text{th}}$ $2$-vertebra.
    \item Glue a copy of $\xi_2(\left\lfloor \frac{k}{2}\right\rfloor-1)$ to the right leaf of the $\left(\left\lfloor \frac{k}{2}\right\rfloor-1\right)^{\text{th}}$ (i.e., lowest) $2$-vertebra.
    \item Glue a copy of $\xi_2(\left\lceil\frac{k}{2}\right\rceil-1)$ to the left leaf of the $\left(\left\lfloor \frac{k}{2}\right\rfloor-1\right)^{\text{th}}$ $2$-vertebra.
    \item Glue a copy of $\xi_2(\left\lceil\frac{k}{2}\right\rceil)$ to the center leaf of the $\left(\left\lfloor \frac{k}{2}\right\rfloor-1\right)^{\text{th}}$ $2$-vertebra.
\end{enumerate}

If $d>2$, then for $k \geq 4$, we obtain $\xi_d(k)$ from the $\left\lfloor \frac{k}{2}\right\rfloor^{\text{th}}$ $d$-spine as follows:
\begin{enumerate}
    \item For each $1 \leq i \leq \left\lfloor \frac{k}{2}\right\rfloor-2$, glue a copy of $\xi_d(i)$ to each of the left and right leaves of the $i^{\text{th}}$ $d$-vertebra.
    \item Glue a copy of $\xi_d(\left\lfloor \frac{k}{2}\right\rfloor-1)$ to the right leaf of the $\left(\left\lfloor \frac{k}{2}\right\rfloor-1\right)^{\text{th}}$ (i.e., second-lowest) $d$-vertebra.
    \item Glue a copy of $\xi_d(\left\lceil\frac{k}{2}\right\rceil-1)$ to the left leaf of the $\left(\left\lfloor \frac{k}{2}\right\rfloor-1\right)^{\text{th}}$ $d$-vertebra.
    \item Glue a copy of $\xi_d(\left\lceil\frac{k}{2}\right\rceil)$ to the center leaf of the $\left\lfloor \frac{k}{2}\right\rfloor^{\text{th}}$ (i.e., lowest) $d$-vertebra.
    \item Glue a copy of $\xi_d\left( \left\lfloor \frac{k+1}{4} \right\rfloor \right)$ to each of the left and right leaves of the $\left\lfloor \frac{k}{2}\right\rfloor^{\text{th}}$ $d$-vertebra.
\end{enumerate}
For $k\geq 4$, the \textit{tail} of $\xi_d(k)$ is the copy of $\xi_d(\left\lceil \frac{k}{2} \right\rceil)$ that is glued to the center leaf of the bottom of the spine in step (4) (in both the $d=2$ and $d>2$ constructions).  Figure~\ref{Fig3} shows $\xi_2(k)$ for some small values of $k$.

\begin{figure}[h]
\begin{center} 
\includegraphics[height=6.5cm]{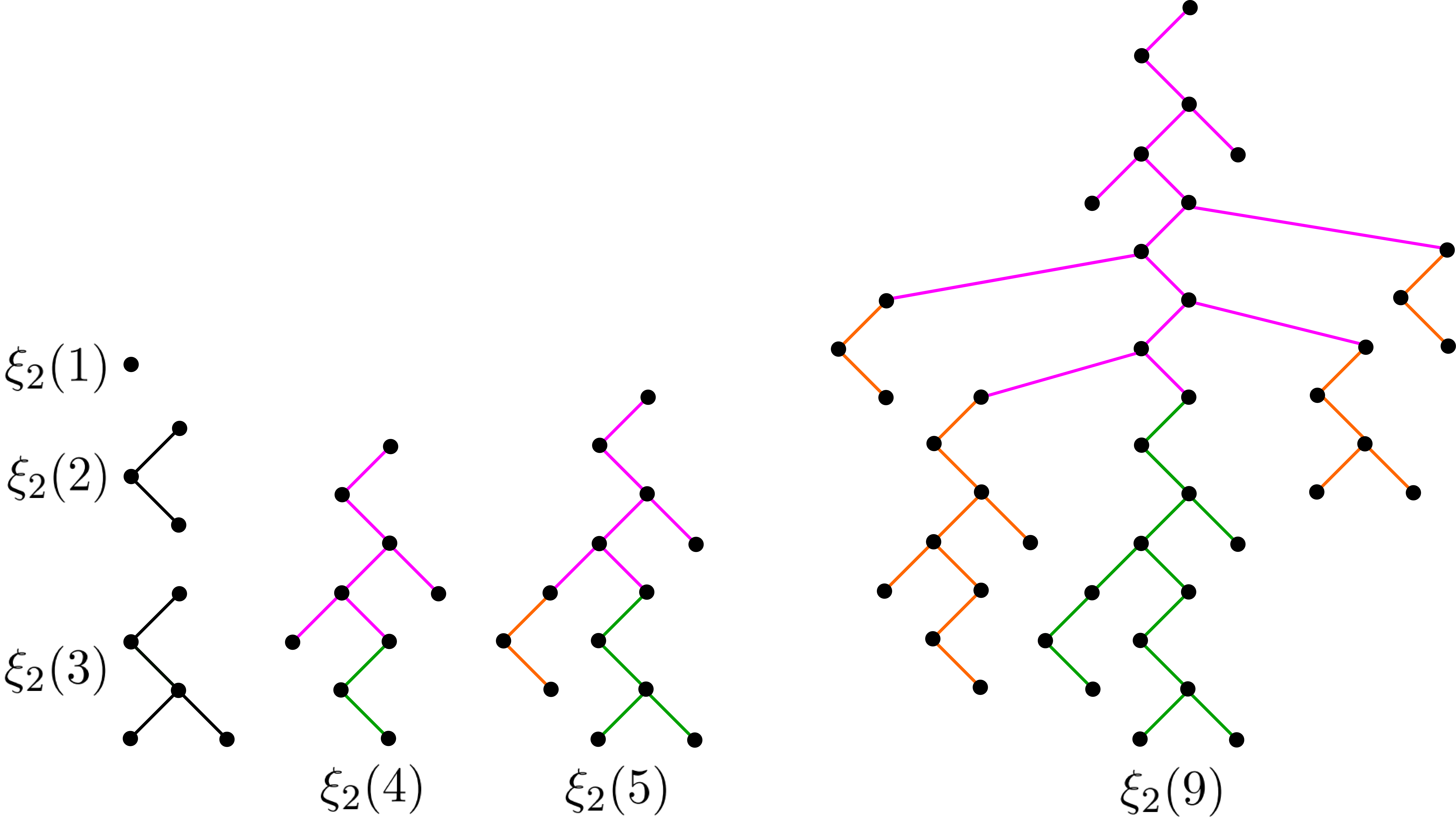}
\end{center}
\caption{The trees $\xi_2(k)$ for $1\leq k\leq 5$, along with $\xi_2(9)$. In $\xi_2(4)$, $\xi_2(5)$, and $\xi_2(9)$, the pink edges represent the spine. The orange edges represent the copies of the previously-constructed trees that are glued to the spine, and the green edges represent the tail.}\label{Fig3}
\end{figure}

We now show that $\xi_d(k)$ noncontiguously contains every $d$-ary plane tree with $k$ vertices.  The big-picture idea is that we can ``siphon off'' small subtrees of the tree that we are trying to contain until what remains fits into the tail.  Many of the arguments are the same for $d=2$ and $d>2$, so we present the proofs together.  The reader may find it helpful to bear in mind the example of $\xi_2(9)$ (as shown in Figure~\ref{Fig3}).

\begin{theorem}\label{thm:colorful-construction}
For all integers $d \geq 2$ and $k \geq 1$, the tree $\xi_d(k)$ noncontiguously contains every $d$-ary plane tree with $k$ vertices. 
\end{theorem}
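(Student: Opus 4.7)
We argue by strong induction on $k$, with the base cases $k \in \{1, 2, 3\}$ verified directly from the explicit definitions of $\xi_d(1), \xi_d(2), \xi_d(3)$.

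For the inductive step, fix $k \geq 4$ and let $T$ be an arbitrary $d$-ary plane tree with $k$ vertices; the goal is to produce a sequence of legal edge contractions transforming $\xi_d(k)$ into $T$. My strategy is to pick a vertex $v \in T$ for which the subtree $T_v$ rooted at $v$ has size at most $\lceil k/2 \rceil$ (so that, by the inductive hypothesis, $T_v$ embeds into the tail $\xi_d(\lceil k/2 \rceil)$), and then to thread the root-to-$v$ path of $T$ along the spine of $\xi_d(k)$: each path-edge of $T$ is realized by one vertebra, exploiting the key flexibility that the $d$-crescent making up a vertebra's backbone can be collapsed, by legal contractions, to a single edge of any chosen type in $[d]$. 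The lateral subtrees branching off the path at each intermediate level are placed, via the inductive hypothesis, into the $\xi_d(i)$'s glued to the left and right leaves of the corresponding vertebra; the two side branches of the vertebra (also built from $d$-crescents) afford the further contractions needed to attach these lateral subtrees at any required edge-type in $[d]$ and in the correct left-to-right order.

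I would choose $v$ by descending greedily into the subtree of largest size at each step, stopping as soon as the current subtree first drops to size at most $\lceil k/2 \rceil$. A heavy-path estimate then bounds the path length suitably relative to the number of vertebrae of the spine, and the sizes of the lateral subtrees at each level can be matched against the capacities of the attached $\xi_d(i)$'s---with the larger and asymmetric capacities $\xi_d(\lfloor k/2 \rfloor - 1)$ and $\xi_d(\lceil k/2 \rceil - 1)$ on the bottom vertebra (and, when $d > 2$, the additional $\xi_d(\lfloor (k+1)/4 \rfloor)$ pieces) designed precisely to absorb the larger laterals that tend to arise near the top of $T$. When the greedy descent terminates before all vertebrae of the spine are consumed---say because $T$ is already ``nearly balanced'' at the top---I would absorb the unused upper vertebrae into the root-clump by further legal contractions, effectively shifting the point inside $\xi_d(k)$ at which the root of $T$ is realized so that the remaining lateral subtrees line up with the appropriate capacities further down the spine.

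The main obstacle is verifying the flexibility of the vertebra: one must show that, within each vertebra, there exist sequences of legal contractions simultaneously realizing (a) the backbone as a single edge of any chosen type, (b) a left-lateral exit into the left $\xi_d(i)$ with any chosen attaching type, and (c) a right-lateral exit into the right $\xi_d(i)$ with any chosen attaching type. This reduces to an intricate but elementary case analysis of the sets $\chi(u)$ inside the three $d$-crescents comprising each vertebra, carried out in a carefully chosen order so that the ``no overlap'' legality condition holds throughout. A secondary delicate point is the size accounting for the lateral subtrees: their assignment to specific attached pieces must exactly match the capacities of those pieces, which is precisely what dictates the recursive appearance of $\lceil k/2 \rceil$ and $\lfloor (k+1)/4 \rfloor$ in the construction of $\xi_d(k)$.
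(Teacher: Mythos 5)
Your high-level strategy matches the paper's: strong induction, a greedy ``heavy path'' from the root down to a vertex whose subtree has at most $\lceil k/2\rceil$ vertices, one vertebra per trunk step (with the crescent supplying the needed edge-type flexibility), lateral subtrees absorbed by the side copies of $\xi_d(i)$, and the remainder placed in the tail. The ``skip forward to larger vertebrae when a lateral is large'' idea that you gesture at is exactly the paper's function $f_*(i)=\max\{f_*(i-1)+1,\, s_i\}$, and the key inequality $\sum_{i=0}^{m-2}\max\{1,s_i\}\le\lfloor k/2\rfloor-1$ (plus a small refinement in the $d=2$ case) is what makes the bookkeeping close. So the skeleton is sound.

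However, there is a genuine gap in how you handle trunk vertices with three or more children, which is the core difficulty when $d>2$. Each vertebra of the spine has only \emph{one} attached $\xi_d(i)$ on the left and one on the right, so at each level you can peel off at most one lateral subtree on each side. If a trunk vertex $v$ has, say, three nontrunk subtrees (two of them on the same side of the trunk child), you cannot simply ``place the lateral subtrees into the $\xi_d(i)$'s'' as you describe, because there is no way to attach two independent subtrees of $v$ from inside a single glued $\xi_d(i)$ without interposing an extra vertex between them and $v$. The paper resolves this by first transforming $T$ into an auxiliary tree $T'$ that noncontiguously contains it: at each such vertex, a new child $v_{i+1}$ is inserted via a \emph{red} edge (of type $1$ or $d$, chosen so the contraction back to $T$ is legal), and all but the leftmost-or-rightmost-nonempty subtree is reattached below $v_{i+1}$. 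This guarantees every trunk vertex of $T'$ has at most two children, so exactly one lateral peels off per vertebra; the embedding is done for $T'$, and $T$ is recovered by contracting the red edges. Your proposal needs this (or an equivalent) device; without it the embedding fails for $d>2$ whenever some vertex on the heavy path has three or more nonempty subtrees.

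A second, smaller gap is the boundary case where the greedy descent terminates after a single step (the paper's $m=1$ case), in which the root already has a subtree of size roughly $k/2$. Your ``absorb the unused upper vertebrae into the root-clump'' remark does not quite address this, because here there is not merely slack at the top---the root's two subtrees have sizes that must be matched simultaneously against the tail and one of the side pieces at the bottom of the spine, and the paper treats this case by a separate explicit case analysis. Finally, note that the ``vertebra flexibility'' you flag as the main obstacle is actually the easy part (the crescent trivially contracts to a single edge of any type, and similarly for the side branches); the real work is the size accounting for $f_*$ and the red-edge transformation above.
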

\begin{proof}
Fix $d$.  We proceed by strong induction on $k$.  The statement is obviously true for $k \leq 3$.  Now, consider $k \geq 4$.  Let $T$ be a $d$-ary plane tree on $k$ vertices.  We will show that $\xi_d(k)$ noncontiguously contains $T$ by showing that $\xi_d(k)$ noncontiguously contains a larger tree $T'$, which in turn noncontiguously contains $T$.


We construct $T'$ from $T$ by defining a finite sequence of pairs $(T_i,v_i)$, where $T_i$ is a tree and $v_i$ is a vertex of $T_i$; we then let $T'$ be the last $T_i$. We will see that we can naturally view the vertices $v_0,\ldots,v_i$ as vertices in the tree $T_{i+1}$. In particular, we can view all of the vertices $v_i$ as vertices in the last tree $T'$. First, let $T_0=T$, and let $v_0$ be the root of $T_0$.  If at any time the subtree in $T_i$ below $v_i$ (including $v_i$ itself) contains at most $\left\lceil \frac{k}{2} \right\rceil$ vertices, then the sequence terminates.  As long as this situation is not achieved, we obtain $(T_{i+1},v_{i+1})$ from $(T_i,v_i)$ as follows.  If $v_i$ has only a single child, then we let $v_{i+1}$ denote this child and let $T_{i+1}=T_i$.  If $v_i$ has exactly $2$ children, then we let $v_{i+1}$ denote the child with the larger subtree (breaking ties with preference for the right child) and let $T_{i+1}=T_i$.

Otherwise, $v_i$ has at least $3$ children.  (This possibility of course pertains only to $d>2$.)  We consider the leftmost and rightmost nonempty subtrees of $v_i$ and obtain $T_{i+1}$ by performing the following operations.  If the leftmost nonempty subtree contains fewer vertices than the rightmost nonempty subtree or these two subtrees contain the same number of vertices, then detach all of the subtrees of $v_i$ except the leftmost nonempty one, add a new child $v_{i+1}$ in the $d^\text{th}$ subtree of $v_i$ via a red edge of type $d$, and reattach the detached subtrees as new subtrees of $v_{i+1}$ (so that the reattached edges have the same types that they originally had).  If the rightmost nonempty subtree contains fewer vertices than the leftmost nonempty subtree, then detach all of the subtrees of $v_i$ except the rightmost nonempty one, add a new child $v_{i+1}$ in the $1^\text{st}$ subtree of $v_i$ via a red edge of type $1$, and reattach the detached subtrees as new subtrees of $v_{i+1}$. 

\begin{figure}[h]
\begin{center} 
\includegraphics[width=0.7\linewidth]{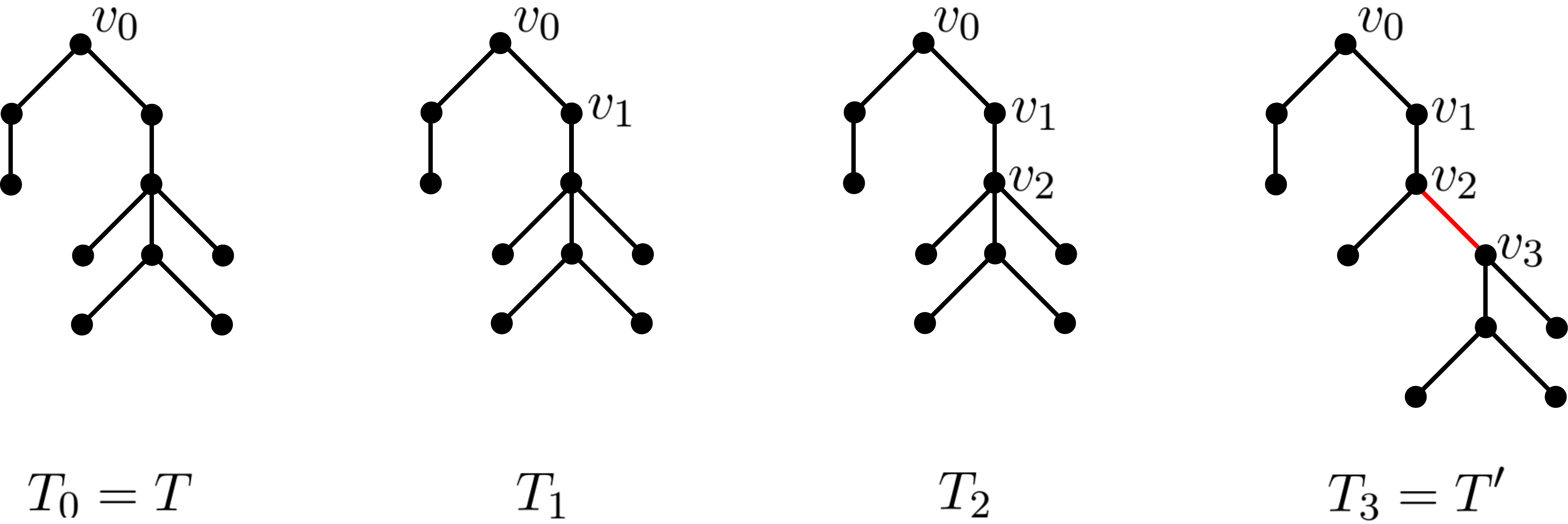}
\end{center}
\caption{An illustration of the sequence transforming $T$ into $T'$, where $d=3$ and $k=10$. We also have $s_0=2$, $s_1=0$, and $s_2=1$.}\label{Fig5}
\end{figure}

This sequence terminates in some $(T_m,v_m)$ with $1\leq m\leq \left\lfloor \frac{k}{2} \right\rfloor$ since each $v_{i+1}$ has strictly fewer vertices below it than $v_i$.  Note that each $T_i$ is either the same as $T_{i+1}$ or else can be obtained from $T_{i+1}$ by (legally) contracting the added red edge between $v_i$ and $v_{i+1}$.  In particular, $T'$ noncontiguously contains $T$. (When $d=2$, $T'$ equals $T$ because we did not add any red edges.) Each vertex $v_i$, for $0 \leq i \leq m-1$, has at most $2$ children in $T'$.  When $v_i$ has exactly $2$ children in $T'$, we think of the subtree containing $v_{i+1}$ as continuing down the main ``trunk'' of $T'$ and the other (smaller) subtree, which we call $\tau_i$, as ``branching off.''  In this case, we let $s_i=|\tau_i|$. If $v_i$ has only $1$ child, then we let $s_i=0$.  Note that the difference between the number of non-red edges below $v_{i}$ and the number of non-red edges below $v_{i+1}$ is given by
$$\begin{cases}
1, &\text{if }v_i\text{ has only a single child in }T'\\
s_i, &\text{if }v_i\text{ has two children in }T'\text{ and the edge between }v_i\text{ and }v_{i+1}\text{ is red}\\
s_i+1, &\text{if }v_i\text{ has two children in }T'\text{ and the edge between }v_i\text{ and }v_{i+1}\text{ is not red}.
\end{cases}$$
We can write this number of non-red edges more concisely as
\begin{equation} \label{eq:condition}
\begin{cases}
\max \{1,s_i\}+1, &\text{if }v_i\text{ has two children and the edge between }v_i\text{ and }v_{i+1}\text{ is not red}\\
\max \{1,s_i\}, &\text{otherwise}.
\end{cases}
\end{equation}
This characterization will be useful later.  Now, we condition on $m$: if $m=1$, then it is in fact easier to embed $T$ in $\xi_d(k)$ directly; if $m>1$, then we describe the embedding of $T'$ in $\xi_d(k)$.

First, suppose $m=1$, i.e., the algorithm terminates after a single step.  If $k$ is even, then the root of $T$ must have two children, with $\frac{k}{2}$ and $\frac{k}{2}-1$ vertices, respectively.  We identify the root of $T$ with the top vertex of the $\left(\frac{k}{2}-1\right)^{\text{th}}$ vertebra.  If the subtree with $\frac{k}{2}-1$ vertices is the right (respectively, left) child of the root, then we can embed this subtree in the copy of $\xi_d\left(\frac{k}{2}-1 \right)$ that is attached to the right (respectively, left) leaf of the $\left(\frac{k}{2}-1\right)^{\text{th}}$ vertebra; and we embed the subtree with $\frac{k}{2}$ vertices in the copy of $\xi_d\left( \frac{k}{2} \right)$ in the tail.  (We have used the inductive hypothesis that $\xi_d(\kappa)$ is actually a $\kappa$-supertree for all $\kappa<k$.)  If $k$ is odd, then there are two possibilities for the subtrees of the root of $T$.
\begin{enumerate}[(i)]
    \item There are two subtrees, each with $\frac{k-1}{2}$ vertices.  We identify the root of $T$ with the top vertex of the $\left(\frac{k-3}{2}\right)^{\text{th}}$ vertebra.  We can embed the left subtree in the copy of $\xi_2\left(\frac{k-1}{2} \right)$ that is attached to the left leaf of the $\left(\frac{k-3}{2}\right)^{\text{th}}$ vertebra, and we can embed the right subtree in the tail.
    \item There are two subtrees, with $\frac{k-3}{2}$ and $\frac{k+1}{2}$ vertices, respectively.  If the subtree with $\frac{k-3}{2}$ vertices is the right (respectively, left) child of the root, then we can embed this subtree in the right (respectively, left) tree that is glued to the $\left(\frac{k-3}{2}\right)^{\text{th}}$ vertebra; and we embed the subtree with $\frac{k+1}{2}$ vertices in the tail.
\end{enumerate}

We now turn to the case $m>1$.  We will describe how to noncontiguously embed $T'$ into $\xi_d(k)$.  We first define functions $f_2:\{0,\ldots,m\}\to \{0,\ldots,\left\lfloor \frac{k}{2}\right\rfloor\}$ and $f_{>2}:\{0,\ldots,m\}\to \{0,\ldots,\left\lfloor \frac{k}{2}\right\rfloor+1\}$ that, roughly speaking, tell us how far down $\xi_d(k)$ to embed each $v_i$.  Unsurprisingly, $f_2$ will be for the $d=2$ case, and $f_{>2}$ will be for the $d>2$ case.  In what follows, we will write $f_*$ in statements that apply to both $f_2$ and $f_{>2}$.  Let $f_2(0)=f_{>2}(0)=s_0$.  For $1 \leq i \leq m-1$, let $$f_2(i)=\max\{f_2(i-1)+1,s_i\}\quad\text{and}\quad f_{>2}(i)=\max\{f_{>2}(i-1)+1,s_i\}.$$  Finally, let $f_2(m)=\left\lfloor \frac{k}{2} \right\rfloor$ and $f_{>2}(m)=\left\lfloor \frac{k}{2} \right\rfloor+1$.  We will see that $f_*$ is strictly increasing and in fact has the claimed codomain; before establishing these facts, we show that they will let us embed $T'$ in $\xi_d(k)$.

For each $0 \leq i \leq m$, we identify $v_i$ with a vertex of $\xi_d(k)$ as follows.  For the $d=2$ case, we identify $v_i$ with: the root of $\xi_2(k)$ if $f_2(i)=0$; the topmost vertex in the $f_2(i)^{\text{th}}$ vertebra of $\xi_2(k)$ if $1 \leq f_2(i) \leq \left\lfloor \frac{k}{2}\right\rfloor-1$; and the topmost vertex of the tail if $f_2(i)=\left\lfloor \frac{k}{2}\right\rfloor$.  For the $d>2$ case, we identify $v_i$ with: the root of $\xi_d(k)$ if $f_{>2}(i)=0$; the topmost vertex in the $f_{>2}(i)^{\text{th}}$ vertebra of $\xi_d(k)$ if $1 \leq f_{>2}(i) \leq \left\lfloor \frac{k}{2}\right\rfloor$; and the topmost vertex of the tail if $f_{>2}(i)=\left\lfloor \frac{k}{2}\right\rfloor+1$. 

Consider any $i$ with $s_i>0$ and $f_*(i) \leq \left\lfloor \frac{k}{2}\right\rfloor-1$. If $\tau_i$ is the left subtree of $v_i$, then the inductive hypothesis and the definition of $f_*$ guarantee that we can embed $\tau_i$ into the copy of $\xi_d(f_*(i))$ that is glued to the left leaf of the $f_*(i)^{\text{th}}$ vertebra.  Contract this glued subtree to a copy of $\tau_i$; then contract the right subtree of this vertebra to a point; then contract the vertebra itself to the edge connecting $v_i$ to $\tau_i$ and one other edge below $v_i$ of the same type as the edge connecting $v_i$ and $v_{i+1}$ in $T'$.  The exact same procedure can be done in the case where $\tau_i$ is the right subtree of $v_i$.  If $s_i=0$ and $i\neq m$, then we contract everything in the $f_*(i)^{\text{th}}$ vertebra except for a single edge of the type that connects $v_i$ and $v_{i+1}$ in $T'$.  Things are even easier for $i>0$ with $s_i=0$ and $f_*(i) \leq \left\lfloor \frac{k}{2}\right\rfloor-1$: in this case, we simply embed the (unique) edge below $v_i$ in the ``center'' crescent of the $i^{\text{th}}$ vertebra (i.e., the crescent whose bottom is the center leaf of the vertebra).  When $i=0$ and $s_0=0$, we embed this edge into the crescent at the top of the spine.

For $d=2$, we finish by contracting the tail of $\xi_2(k)$ to a copy of the tree below $v_m$ in $T'$.  This completes the contraction of $\xi_2(k)$ to $T'$, which, as remarked earlier, can be further contracted to $T$.  Now, we turn to $d>2$.  We will later show that if $f_{>2}(m-1)=\left\lfloor \frac{k}{2}\right\rfloor$, then $s_{m-1} \leq \left\lfloor \frac{k+1}{4} \right\rfloor$, so we can embed $\tau_{m-1}$ at the level of the $\left\lfloor \frac{k}{2}\right\rfloor ^{\text{th}}$ $d$-vertebra as in the previous paragraph.  And then we contract the tail of $\xi_d(k)$ to a copy of the tree below $v_m$ in $T'$, which completes the contraction of $\xi_d(k)$ to $T'$.

The next order of business is showing that $f_2$ and $f_{>2}$ have the desired properties for the embedding described above. We first show that $f_2(m-1)\leq \left\lfloor \frac{k}{2}\right\rfloor-1$ and $f_{>2}(m-1) \leq \left\lfloor \frac{k}{2}\right\rfloor$.  Both functions are strictly increasing on $i \leq m-1$, so this will also prove that they are injective. Easy induction on $r$ shows that $$f_*(r) \leq \sum_{i=0}^r \max \{1,s_i\},$$ with equality exactly when $s_0 \geq 1$ and $s_i\leq 1$ for all $1 \leq i \leq r$.  In particular,
\begin{equation}\label{eq:f-inequality}
f_*(m-2) \leq \sum_{i=0}^{m-2} \max \{1,s_i\}.
\end{equation}
At the same time, recall that $\max \{1,s_i\}$ is controlled by the number of edges of $T$ (non-red edges of $T'$) that ``peel away'' at the vertex $v_i$ (compare with \eqref{eq:condition}), so the condition for the termination of the sequence  $(T_i,v_i)$ implies that
\begin{equation}\label{eq:big-inequality}
\sum_{i=0}^{m-2} \max \{1,s_i\} \leq \textstyle{\left\lfloor \frac{k}{2} \right\rfloor}-1.
\end{equation}
This immediately implies the claim about $f_{>2}(m-1)$.

Next, we can show that the inequalities \eqref{eq:f-inequality} and \eqref{eq:big-inequality} cannot both be tight for $d=2$; this will imply that $f_2(m-2)\leq \left\lfloor \frac{k}{2} \right\rfloor-2$, and the inequality $f_2(m-1)\leq \left\lfloor \frac{k}{2} \right\rfloor-1$ will immediately follow from the definition of $f_2$. To see that this improvement is in fact achieved, note that the first condition in equation~\eqref{eq:condition} (which implies an improvement to \eqref{eq:big-inequality}) always occurs somewhere unless $T$ consists of a path on $\left\lfloor \frac{k}{2} \right\rfloor+1$ vertices with a tree on $\left\lceil \frac{k}{2} \right\rceil$ vertices glued to the bottom.  But in this exceptional case, we have $s_0=0$, so inequality~\eqref{eq:f-inequality} is not tight. This completes the proof for $d=2$. 

We still need to check that if $d>2$ and $f_{>2}(m-1)=\left\lfloor \frac{k}{2} \right\rfloor$, then $s_{m-1} \leq \left\lfloor \frac{k+1}{4} \right\rfloor$. Suppose $f_{>2}(m-1)=\left\lfloor \frac{k}{2} \right\rfloor$, so that the inequalities \eqref{eq:f-inequality} and \eqref{eq:big-inequality} are equalities. This means that $f_{>2}(m-2)=\left\lfloor \frac{k}{2} \right\rfloor-1$.  Because \eqref{eq:f-inequality} is an equality, the vertex $v_{m-1}$ has exactly $\left\lceil \frac{k}{2} \right\rceil$ vertices (including $v_{m-1}$) below it in $T_{m-1}$.  If here $v_{m-1}$ has only a single nonempty subtree, then $s_{m-1}=0$ and we are done.  Otherwise, $v_{m-1}$ has at least two nonempty subtrees.  The (weakly) smaller of the rightmost and leftmost of these subtrees must have at most $\left\lfloor \frac{1}{2} \left\lceil \frac{k}{2} \right\rceil \right\rfloor=\left\lfloor \frac{k+1}{4} \right\rfloor$ vertices, so we conclude that $s_{m-1} \leq \left\lfloor \frac{k+1}{4} \right\rfloor$, as desired.
\end{proof}

We remark that in the $d=2$ case, ad hoc arguments show that this construction is in fact optimal for $k \leq 5$; however, small refinements are possible for sufficiently large $k$.

Now that we have shown that the $\xi_d(k)$'s are in fact noncontiguous $k$-universal $d$-ary plane trees, we focus on their sizes.  Let $M_d(k)$ denote the number of vertices in $\xi_d(k)$.  The following proposition, whose proof we omit, follows from counting the various parts of $\xi_d(k)$ as described in the recursive construction above.  Let $\delta_{x,y}$ denote the Kronecker delta, which has the value $1$ when $x=y$ and the value $0$ otherwise.  Note that the $-1$'s below account for ``overlap'' vertices that are contained in multiple parts.  

\begin{proposition}\label{prop:colorful-size}

For fixed $d$, the sequence $M_d(k)$ has the initial conditions \[M_d(1)=1,\quad M_d(2)=d+1,\quad M_d(3)=2d+1,\] and for $k \geq 4$, it obeys the recurrence
\begin{align*}
M_d(k) &=(d+1)+\left( \textstyle{\left\lfloor \frac{k}{2} \right\rfloor}-\delta_{d,2} \right)(3d-2)+2\sum_{i=1}^{\left\lfloor \frac{k}{2} \right\rfloor-2}(M_d(i)-1) +M_d\left(\textstyle{\left\lfloor \frac{k}{2} \right\rfloor}-1\right)-1\\
 &+M_d\left(\textstyle{\left\lceil \frac{k}{2} \right\rceil}-1\right)-1+ M_d\left(\textstyle{\left\lceil \frac{k}{2} \right\rceil}\right)-1+2(1-\delta_{d,2})\left( M_d\left(\textstyle{ \left\lfloor \frac{k+1}{4} \right\rfloor} \right)-1\right).\end{align*}
\end{proposition}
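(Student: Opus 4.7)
The identity is purely enumerative: I would count the vertices of $\xi_d(k)$ by decomposing it into its constituent pieces from the recursive construction and accounting for shared vertices at each gluing site. Two small preliminary computations feed into the main count, so I would dispose of those first.

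I would begin with the base cases. The tree $\xi_d(1)$ is a single vertex, giving $M_d(1)=1$; the tree $\xi_d(2)$ is the $d$-crescent, a path on $d+1$ vertices, giving $M_d(2)=d+1$; and $\xi_d(3)$ is obtained from the $d$-crescent by adding $d$ new children to its unique leaf, giving $M_d(3)=(d+1)+d=2d+1$. Next I would compute $|V_d|$. Three copies of the $d$-crescent contribute $3(d+1)$ vertices; removing the lowest vertex of the first crescent and the root of the third removes two vertices; and the two gluing operations each identify a pair of vertices. Hence $|V_d|=3(d+1)-2-2=3d-1$. When $V_d$ is subsequently glued to a leaf of the existing spine, its top vertex is identified with that leaf, so it contributes only $3d-2$ new vertices.

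For $k\geq 4$, I would decompose $\xi_d(k)$ into four parts (disjoint up to shared boundary vertices) and count each one. The \emph{spine} consists of the top $d$-crescent (contributing $d+1$ vertices) together with its vertebrae; the construction uses $\lfloor k/2\rfloor-1$ vertebrae when $d=2$ and $\lfloor k/2\rfloor$ when $d>2$, which is uniformly $\lfloor k/2\rfloor-\delta_{d,2}$, so the spine contributes $(d+1)+(\lfloor k/2\rfloor-\delta_{d,2})(3d-2)$ vertices. The \emph{middle side trees}, from step~(1) of each construction, contribute $2\sum_{i=1}^{\lfloor k/2\rfloor-2}(M_d(i)-1)$ vertices, with the ``$-1$'' in each summand accounting for the root of $\xi_d(i)$ being identified with a leaf of a vertebra. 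The \emph{terminal side trees} from steps~(2) and~(3) contribute $M_d(\lfloor k/2\rfloor-1)-1$ and $M_d(\lceil k/2\rceil-1)-1$ vertices, respectively. Finally, the \emph{tail} from step~(4) contributes $M_d(\lceil k/2\rceil)-1$ vertices, and in the $d>2$ case step~(5) adds two more copies of $\xi_d(\lfloor(k+1)/4\rfloor)$ for an additional $2(M_d(\lfloor(k+1)/4\rfloor)-1)$ vertices; the factor $2(1-\delta_{d,2})$ cleanly encodes the case distinction.

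The main (mild) obstacle will be keeping straight the asymmetry between $d=2$ and $d>2$: for $d=2$ the terminal side trees of steps~(2)--(4) all attach to the single lowest vertebra, while for $d>2$ steps~(2)--(3) attach to the second-lowest vertebra and steps~(4)--(5) attach to the lowest one. Once this bookkeeping is in place, however, both scenarios produce the same total when the four contributions are summed, yielding exactly the stated recurrence.
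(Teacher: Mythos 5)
Your proof is correct and fills in exactly the counting argument the paper alludes to (the paper states the recurrence "follows from counting the various parts of $\xi_d(k)$" and omits the details). Your tallies all check out: $|V_d|=3(d+1)-2-2=3d-1$, so each glued vertebra contributes $3d-2$ new vertices; the spine has $\lfloor k/2\rfloor-\delta_{d,2}$ vertebrae atop a $d$-crescent; and each glued copy of $\xi_d(j)$ contributes $M_d(j)-1$ new vertices since its root is identified with an existing leaf — summing these reproduces the stated recurrence, with the factor $2(1-\delta_{d,2})$ correctly encoding the extra step (5) pieces present only when $d>2$.
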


We can use Proposition \ref{prop:colorful-size} and an argument similar to one of the proofs in \cite{Goldberg} to obtain asymptotics for $M_d(k)$.

\begin{corollary}\label{cor:colorful}
For fixed $d \geq 2$, we have
$$N_{d\ary}^{\non}(k)\leq M_d(k)=k^{\frac{1}{2}\log_2(k)(1+o(1))}.$$
\end{corollary}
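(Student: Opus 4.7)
The plan is to extract the asymptotics of $M_d(k)$ directly from the recurrence in Proposition~\ref{prop:colorful-size}. Fix $d$ and treat all $d$-dependent factors as $O(1)$. The non-recursive contributions total $O(dk) = O(k)$, and the recursive contributions split into a dominant sum $2\sum_{i=1}^{\lfloor k/2\rfloor-2}(M_d(i)-1)$ plus a bounded number of single calls of the form $M_d(\lceil k/2\rceil)$, $M_d(\lfloor k/2\rfloor - 1)$, $M_d(\lfloor (k+1)/4\rfloor)$, each contributing only lower-order error. I will prove matching bounds $\tfrac{1}{2}(\log_2 k)^2(1-o(1)) \leq \log_2 M_d(k) \leq \tfrac{1}{2}(\log_2 k)^2(1+o(1))$ by strong induction.

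For the upper bound, the crude estimate $M_d(i) \leq M_d(\lfloor k/2\rfloor)$ for $i \leq \lfloor k/2\rfloor$ collapses the sum to at most $(k/2)\cdot M_d(\lfloor k/2\rfloor)$, and each of the single recursive terms is at most $M_d(\lceil k/2\rceil)$. Thus there is a constant $C_d$ with $M_d(k) \leq C_d\, k\, M_d(\lceil k/2\rceil)$ for all sufficiently large $k$. Iterating $t = \lceil \log_2 k \rceil$ times and taking logarithms,
\[
\log_2 M_d(k) \;\leq\; \sum_{j=0}^{t-1}\bigl(\log_2\lceil k/2^j\rceil + \log_2 C_d\bigr) + O(1) \;=\; \sum_{j=0}^{t-1}(\log_2 k - j) + O(\log_2 k) \;=\; \tfrac{1}{2}(\log_2 k)^2 + O(\log_2 k),
\]
which is the desired upper bound.

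For the lower bound, I use the ansatz $M_d(k) \geq 2^{h(k)}$ with $h(k) = \tfrac{1}{2}(\log_2 k)^2 - A\log_2 k\cdot\log_2\log_2 k$ for a sufficiently large constant $A$ (e.g., $A = 2$); this is clearly enough to conclude $\log_2 M_d(k) \geq \tfrac{1}{2}(\log_2 k)^2(1-o(1))$. The key analytic fact is that $h'(i) \sim (\log_2 i)/(i\ln 2)$ in the regime $i \asymp k$, so there is an interval $[n_1, \lfloor k/2\rfloor - 2]$ of length $\Theta(k/\log_2 k)$ on which $h(i)$ stays within an additive $O(1)$ of its right endpoint value. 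Restricting the sum in the recurrence to this interval and invoking the inductive hypothesis yields
\[
M_d(k) \;\geq\; 2\sum_{i=n_1}^{\lfloor k/2\rfloor-2} 2^{h(i)} \;\geq\; C_1\cdot\frac{k}{\log_2 k}\cdot 2^{h(\lfloor k/2\rfloor-2)}.
\]
A short computation using $h(k) - h(\lfloor k/2\rfloor - 2) = \log_2 k - A\log_2\log_2 k + O(1)$ together with $\log_2(k/\log_2 k) = \log_2 k - \log_2\log_2 k$ shows that for $A > 1$ the inductive step closes whenever $k$ exceeds some threshold $k_0(A)$; small values of $k$ are handled directly as base cases from the explicit initial conditions.

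The main obstacle is precisely the lower bound. A naive induction using only the largest summand, $M_d(k) \geq 2M_d(\lfloor k/2\rfloor - 2)$, yields only $\log_2 M_d(k) = \Omega(\log_2 k)$, off by a factor of $\log_2 k$. The genuine content is that $\Theta(k/\log_2 k)$ of the summands in the recurrence are within a bounded factor of the maximum; this produces a multiplicative gain of $k$ divided by a polylogarithmic factor per halving, and this gain integrates over $\log_2 k$ levels of the recursion to give the quadratic-in-$\log k$ exponent. The buffer term $-A\log_2 k\cdot\log_2\log_2 k$ in the ansatz is calibrated precisely to absorb the $\log_2\log_2 k$ loss incurred at each level.
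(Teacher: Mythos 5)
Your proof is correct, but it takes a genuinely different route from the paper's. The paper first telescopes the recurrence (by considering $M_d(k)-M_d(k-2)$) so that the sum disappears, leaving an inequality of the shape $M_d(k) > M_d(k-2)+2M_d(\text{roughly }k/2)$; it then runs two constant-tracking inductions, showing $M_d(k) < C\exp\bigl(\tfrac{1}{2\log 2}\log^2 k\bigr)$ for a fixed $C$ and $M_d(k) > c\exp(\gamma\log^2 k)$ for \emph{every} $\gamma < \tfrac{1}{2\log 2}$, with each inductive step closing because a gain factor $1 + 2k^{-2\gamma\log 2}(1+o(1))$ beats the needed factor $1 + O(\log k/k)$ when $2\gamma\log 2<1$. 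You instead keep the sum intact. For the upper bound you collapse it crudely to $M_d(k)\leq C_d\, k\, M_d(\lceil k/2\rceil)$ and iterate $\lceil \log_2 k\rceil$ times, summing the triangle $\sum_{j}(\log_2 k - j)$ to get $\tfrac12(\log_2 k)^2 + O(\log_2 k)$; for the lower bound you count that a window of $\Theta(k/\log_2 k)$ summands at the top of the range all lie within an additive $O(1)$ of $h(\lfloor k/2\rfloor-2)$, and you absorb the resulting per-level $\log_2\log_2 k$ loss with the buffer term $-A\log_2 k\,\log_2\log_2 k$ in your ansatz. The two arguments capture the same phenomenon---the quadratic-in-$\log k$ exponent comes from accumulating $\Theta(k)$ copies of $M_d(\sim k/2)$ over $\log_2 k$ halvings---but your version makes this combinatorial mechanism explicit while the paper's telescoping packages it into a compact two-term recursion and a one-parameter family of inductions. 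Your approach is somewhat more transparent about where the gain originates; the paper's is terser once the telescope is spotted and avoids the need to argue about how $h$ varies over the interval of summands.
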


\begin{proof}
Fix $d$.  It will be convenient to work with natural logarithms, so note that $k^{\frac{1}{2}\log_2(k)}=\exp\left(\frac{1}{2\log 2}\log^2 k\right)$. We first prove that there is some constant $C$ (depending on $d$) such that $M_d(k)<C\exp\left( \frac{1}{2 \log 2} \log^2 k \right)$ for all $k$.  We proceed by induction on $k$, where making $C$ large deals with any base cases.  It is obvious (by construction) that $M_d(k)=|\xi_d(k)|$ is monotonically increasing in $k$.  We compute (for sufficiently large $k$):
\begin{align*}
    M_d(k) &=M_d(k-2)+(3d-2)+2\left(M_d\left( \textstyle{\left\lfloor \frac{k}{2} \right\rfloor}-2 \right)-1\right)+M_d\left( \textstyle{\left\lfloor \frac{k}{2} \right\rfloor}-1 \right)\\
     &\quad -M_d\left( \textstyle{\left\lfloor \frac{k}{2} \right\rfloor}-2 \right)+M_d\left( \textstyle{\left\lceil \frac{k}{2} \right\rceil} \right)-M_d\left( \textstyle{\left\lceil \frac{k}{2} \right\rceil}-2 \right)+2(1-\delta_{d,2})\left(M_d\left( \textstyle{\left\lfloor \frac{k+1}{4} \right\rfloor} \right)-M_d\left( \textstyle{\left\lfloor \frac{k-1}{4} \right\rfloor} \right)\right)\\
      &\leq M_d(k-2)+3d-2+M_d\left( \textstyle{\left\lfloor \frac{k}{2} \right\rfloor}-1 \right)+M_d\left( \textstyle{\left\lceil\frac{k}{2} \right\rceil} \right)+2M_d\left( \textstyle{\left\lfloor \frac{k+1}{4} \right\rfloor} \right)\\
    &< C\exp\left( \frac{1}{2 \log 2} \log^2 (k-2) \right)+5C\exp\left( \frac{1}{2 \log 2} \log^2 \left( \frac{k+1}{2} \right) \right)\\
    &=C\exp\left( \frac{1}{2 \log 2} \log^2 (k-2) \right) \left( 1+5\exp \left(  \frac{1}{2\log 2} \log\left( \frac{(k+1)(k-2)}{2} \right)\log \left(\frac{k+1}{2(k-2)} \right) \right) \right)\\
    &=C\exp\left( \frac{1}{2 \log 2} \log^2 (k-2) \right) \left(1+\frac{5}{k}+o\left(\frac{1}{k} \right) \right).
\end{align*}
At the same time, this expression is certainly smaller than
\begin{align*}
    C\exp\left( \frac{1}{2 \log 2} \log^2 k \right) &=C\exp\left( \frac{1}{2 \log 2} \log^2 (k-2) \right)\left(1+\frac{2\log k}{k \log 2}+o\left(\frac{\log k}{k} \right) \right)
\end{align*}
for sufficiently large $k$, which establishes the first claim.

Second, we show that for any $\gamma <\frac{1}{2 \log 2}$, there exists a constant $c>0$ (depending on $\gamma$) such that $N_d(k)>c\exp(\gamma \log^2 k)$ for all $k$.  As above, we proceed by induction on $k$, where making $c$ small deals with the base cases.  This time, we compute:
\begin{align*}
    M_d(k &)>M_d(k-2)+2M_d\left( \frac{k-5}{2} \right)\\
    &>c\exp \left( \gamma \log^2 (k-2) \right)\left(1+2\exp \left( \gamma \log\left( \frac{(k-5)(k-2)}{2} \right) \log \left( \frac{k-5}{2(k-2)} \right) \right) \right)\\
    &=c\exp \left( \gamma \log^2 (k-2) \right)\left(1+\frac{2}{k^{2\gamma \log 2}}+o\left(\frac{1}{k^{2\gamma \log 2}} \right)  \right).
\end{align*}
Since $2\gamma \log 2<1$, this expression is larger than
$$c\exp \left( \gamma \log^2 k \right)=c\exp \left( \gamma \log^2 (k-2) \right)\left(1+\frac{4\gamma \log k}{k}+o\left(\frac{\log k}{k} \right) \right)$$
for sufficiently large $k$, as desired.  The two claims together imply the result.
\end{proof}

\section{$[d]$-trees}\label{sec:0,1,...,d}

\subsection{Contiguous containment}\label{subsec:0,1,...,d-contiguous}

\subsubsection{Lower bounds}
We can obtain a lower bound for $N^{\text{con}}_{[d]}(k)$ by modifying the argument in the proof of the first part of  Theorem~\ref{thm:d-ary-contiguous}.  In particular, we apply this argument to a slightly different family of trees that are difficult to contain.

\begin{theorem}\label{thm:0,...,d-contiguous-lower}
For $d\geq 2$ and $k \geq 2$, we have \[N_{[d]}^{\con}(k)\geq \left(k-1-d\textstyle{\left\lfloor \frac{k-2}{d} \right\rfloor}\right)d^{\left\lfloor \frac{k-2}{d} \right\rfloor}+\textstyle{\left\lfloor \frac{k-2}{d} \right\rfloor}+1\geq d^{\frac{k-2}{d}}.\]
\end{theorem}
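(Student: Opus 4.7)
The plan is to adapt the path-counting argument in the proof of Theorem~\ref{thm:d-ary-contiguous} to the $[d]$-tree setting by choosing a suitable family of ``stalk-with-leaves'' trees. Let $q=\lfloor (k-2)/d\rfloor$ and $r=k-2-dq$, so that $k=dq+r+2$ with $0\le r<d$. For each tuple $\vec{p}=(p_0,\ldots,p_{q-1})\in[d]^q$ I would construct a $[d]$-tree $T_{\vec{p}}$ on $k$ vertices consisting of a stalk $v_0,v_1,\ldots,v_q$ from the root $v_0$ down to depth $q$, where for each $j=0,1,\ldots,q-1$ the vertex $v_j$ has exactly $d$ children (with the continuing stalk child $v_{j+1}$ at position $p_j$ and the other $d-1$ being leaves at depth $j+1$), and $v_q$ has $r+1$ leaf children at depth $q+1$. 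A direct count gives $|T_{\vec{p}}|=(q+1)+q(d-1)+(r+1)=k$. Because each $v_{j+1}$ is the unique child of $v_j$ that carries a nontrivial subtree, its position $p_j$ among its siblings is an isomorphism invariant, so the $d^q$ trees $T_{\vec{p}}$ are pairwise non-isomorphic. (When $q=0$ the family has a single tree and the inequality becomes the trivial $|\mathbf{T}|\ge k$; I assume $q\ge 1$ below.)

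Let $\mathbf{T}$ be a contiguous $k$-universal $[d]$-tree, fix a contiguous embedding $T_{\vec{p}}^*\subseteq\mathbf{T}$ for each $\vec{p}$, and write $v_j^*(\vec{p})$ for the image of $v_j$. The crucial rigidity observation is: for each $j=0,1,\ldots,q-1$, the vertex $v_j^*(\vec{p})$ has $d$ children in $T_{\vec{p}}^*$ and hence at least $d$ children in $\mathbf{T}$; since $\mathbf{T}$ is a $[d]$-tree, it has at most $d$ children there, so $v_j^*(\vec{p})$ has exactly $d$ children in $\mathbf{T}$, all of which lie in $T_{\vec{p}}^*$. Consequently the child-ordering at $v_j^*(\vec{p})$ is inherited from $\mathbf{T}$, and the continuing stalk child $v_{j+1}^*(\vec{p})$ is forced to be the $p_j$-th child of $v_j^*(\vec{p})$ in $\mathbf{T}$. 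From this I would deduce that $v_q^*(\vec{p})\ne v_q^*(\vec{p}')$ whenever $\vec{p}\ne\vec{p}'$: if the two embeddings share a root, then at the first coordinate where $\vec{p},\vec{p}'$ differ the two continuing children must be distinct children of a common vertex, so the stalks diverge; if the roots differ, then either they lie in disjoint subtrees of $\mathbf{T}$ (so the $v_q^*$'s are trivially distinct) or one is a strict ancestor of the other (so the $v_q^*$'s lie at different depths in $\mathbf{T}$).

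Once the $d^q$ vertices $v_q^*(\vec{p})$ are known to be pairwise distinct in $\mathbf{T}$, their sets of $\mathbf{T}$-children are pairwise disjoint; each $v_q^*(\vec{p})$ has at least $r+1$ children in $\mathbf{T}$, all at depth $\ge q+1$, giving $(r+1)d^q$ distinct vertices in $\mathbf{T}$ at depth at least $q+1$. Combined with at least one vertex at each of the depths $0,1,\ldots,q$ (necessary since $\mathbf{T}$ contains trees of depth $q+1$), this yields $|\mathbf{T}|\ge (r+1)d^q+q+1$. For the second inequality, writing $(k-2)/d=q+r/d$ reduces $(r+1)d^q+q+1\ge d^{(k-2)/d}=d^q\cdot d^{r/d}$ to the elementary fact $r+1\ge d^{r/d}$ on $0\le r\le d-1$, which follows from concavity of $\log$. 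The hardest step is the rigidity: without the degree bound on $\mathbf{T}$, an embedding could include only $d$ of the many children of $v_j^*(\vec{p})$ and thereby freely shift the position of the continuing child, letting different $\vec{p}$'s collapse to a common $v_q^*$ and breaking the count. The $[d]$-tree constraint on $\mathbf{T}$ is precisely what forces all children into the embedding and pins down the positional tuple.
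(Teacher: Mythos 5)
Your proof is correct and follows essentially the same approach as the paper: you use the same family of stalk-with-leaves trees (the paper describes them via a path with ``red'' edges and freely distributed extra children), count the $(r+1)d^q$ bottom vertices, and add one vertex per depth along the stalk. The rigidity observation you highlight---that a vertex with $d$ children in the embedding must have all of its children in $\mathbf{T}$ present, which pins down the stalk child's sibling position---is exactly the ``easy modification'' of the argument from Theorem~\ref{thm:d-ary-contiguous} that the paper alludes to without spelling out.
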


\begin{proof}
Let $\bf T$ be a contiguous $k$-universal $[d]$-tree.  Consider the following procedure for building $[d]$-trees with $k$ vertices. Start with a $[d]$-tree that is a path on $\left\lfloor\frac{k-2}{d}\right\rfloor+1$ vertices, and color the edges of this path red. Add $d-1$ additional children to each nonleaf vertex of this path. When adding these additional children to a nonleaf vertex $v$, we choose freely how many children to place on the left of the red edge coming down from $v$, then we place the remaining children on the right of this red edge. Finally, place $k-1-d\left\lfloor \frac{k-2}{d} \right\rfloor$ vertices as children of the leaf of the original path. This forms a $[d]$-tree with $k$ vertices, and there are $d^{\left\lfloor \frac{k-2}{d} \right\rfloor}$ trees that can be built in this fashion. Each of these trees has $k-1-d\left\lfloor \frac{k-2}{d} \right\rfloor$ vertices at depth $\left\lfloor\frac{k-2}{d}\right\rfloor+1$, an easy modification of the argument in Theorem~\ref{thm:d-ary-contiguous} shows that these vertices must correspond to pairwise distinct vertices in $\bf T$ of depth at least $\left\lfloor\frac{k-2}{d}\right\rfloor+1$.  Thus, $\bf T$ has at least $$\left(k-1-d\textstyle{\left\lfloor \frac{k-2}{d} \right\rfloor}\right)d^{\left\lfloor \frac{k-2}{d} \right\rfloor}$$ vertices at depth at least $\left\lfloor\frac{k-2}{d}\right\rfloor+1$. The term $\left\lfloor\frac{k-2}{d}\right\rfloor+1$ in the statement of the theorem accounts for the fact that $\bf T$ must also have at least one vertex at each depth $0,1,\ldots, \left\lfloor\frac{k-2}{d}\right\rfloor$.
\end{proof}

\subsubsection{Upper bounds}

As mentioned in the introduction, the quantity $N^{\text{con}}_{d\ary}(k)=d^{k-1}+k-1$ is an upper bound for $N^{\text{con}}_{[d]}(k)$.  We can dramatically improve the base of the exponential by describing an explicit construction for a family $\{\Lambda_d(k)\}$ of contiguous $k$-universal $[d]$-trees.

The construction of $\Lambda_d(k)$ is recursive.  We first let $\Lambda_d(1)$ be a single vertex.  For $2 \leq k \leq d$, we construct $\Lambda_d(k)$ by attaching the subtrees $\Lambda_d(1), \Lambda_d(2), \ldots, \Lambda_d\left(\left\lfloor\frac{k}{2}\right\rfloor-1\right)$, then $\Lambda_d(k-1)$, then $\Lambda_d\left(\left\lceil\frac{k}{2}\right\rceil-1\right), \Lambda_d\left(\left\lceil\frac{k}{2}\right\rceil-2\right), , \ldots, \Lambda_d(1)$ to the root, in that order from left to right.  For $k>d$, we construct $\Lambda_d(k)$ by attaching the subtrees $\Lambda_d(k-d), \Lambda_d(k-d+1), \ldots, \Lambda_d\left(k-\left\lfloor\frac{d}{2}\right\rfloor-2\right)$, then $\Lambda_d(k-1)$, then $\Lambda_d\left(k-\left\lceil\frac{d}{2}\right\rceil-1\right), \Lambda_d\left(k-\left\lceil\frac{d}{2}\right\rceil-2\right), \ldots, \Lambda_d(k-d)$ to the root, in that order from left to right.  The proof that these trees are in fact $k$-supertrees is similar in spirit to the proof of Theorem~\ref{thm:colorful-construction}.

\begin{theorem}\label{thm:0,...,d-upper-bound}
Let $d\geq 2$ and $k\geq 1$ be integers. For every $k$-vertex $[d]$-tree $T$, there is a contiguous embedding $T^*$ of $T$ in $\Lambda_d(k)$ such that the root of $T^*$ coincides with the root of $\Lambda_d(k)$. 
\end{theorem}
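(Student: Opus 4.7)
My plan is strong induction on $k$, establishing the stronger statement that $\Lambda_d(k)$ contiguously contains every $[d]$-tree on at most $k$ vertices with matching roots. The base case $k=1$ is trivial. For the inductive step, let $T$ be a $[d]$-tree on $m \leq k$ vertices whose root has $t \leq d$ subtrees $T_1, \ldots, T_t$ (from left to right) of sizes $n_1, \ldots, n_t$, and write $S_l = n_1 + \cdots + n_l$. The construction of $\Lambda_d(k)$ places a distinguished ``middle'' child $\Lambda_d(k-1)$ (the uniquely largest subtree) at the root, flanked on both sides by smaller $\Lambda_d$'s whose sizes form a mountain weakly decreasing outward.

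I would choose a pivot index $l^*$, embed $T_{l^*}$ into the middle subtree (possible by the inductive hypothesis since $n_{l^*} \leq k - 1$), and embed $T_1, \ldots, T_{l^*-1}$ and $T_{l^*+1}, \ldots, T_t$ into some of the left and right subtrees of $\Lambda_d(k)$'s root, respectively, in their natural order, with each sub-embedding provided by the inductive hypothesis applied to a smaller $\Lambda_d(a)$. The first-cut pivot is $l_0 = \min\{l : S_l \geq m - \lceil k/2 \rceil\}$. Using $S_l \geq l$, this satisfies $l_0 \leq \lfloor k/2 \rfloor$ and yields the partial-sum bounds $S_{l_0 - 1} \leq \lfloor k/2 \rfloor - 1$ and $S_t - S_{l_0} \leq \lceil k/2 \rceil - 1$. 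For the left side I would right-align the trees $T_1, \ldots, T_{l^*-1}$ against the rightmost $l^*-1$ left slots (so $T_l$ maps to the $l$-th leftmost of these); a short calculation using the pointwise bound $n_l \leq S_{l^*-1} - (l^* - 2)$ and the monotonicity of left slot sizes shows each chosen slot is large enough to receive its assigned tree via the inductive hypothesis. The right side is symmetric.

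In Case A ($k \leq d$), the count constraints $l^* - 1 \leq \lfloor k/2 \rfloor - 1$ and $t - l^* \leq \lceil k/2 \rceil - 1$ follow for free from the sum bounds together with $n_l \geq 1$, and the balanced choice $l^* = l_0$ works directly. The main obstacle is Case B ($k > d$), in which $\Lambda_d(k)$'s root has only $d$ children (split as $\lceil d/2 \rceil - 1$ left, one middle, and $\lfloor d/2 \rfloor$ right), so $l_0$ may violate the count constraint $l^* - 1 \leq \lceil d/2 \rceil - 1$ or $t - l^* \leq \lfloor d/2 \rfloor$. To handle this I would clip $l_0$ into the window $[\max(1, t - \lfloor d/2 \rfloor), \lceil d/2 \rceil]$, which is nonempty because $t \leq d$: shifting $l^*$ downward from $l_0$ preserves the left sum bound, and shifting upward preserves the right. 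The payoff is that every side slot in Case B has size at least $k - d$, much larger than in Case A, and combined with the coarse bound $n_l \leq m - t \leq k - t$ on individual subtree sizes, together with the observation that at most one $T_l$ can have size exceeding $k - d$ (since $S_t \leq k - 1$ and each of the other $t - 1 \leq d - 1$ subtrees has at least one vertex), the per-slot size inequalities continue to hold. The bulk of the proof work will be this Case B bookkeeping, checking each configuration of how $l_0$ sits relative to the count window.
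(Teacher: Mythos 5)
Your overall strategy---pick a pivot $l^*$, send $T_{l^*}$ to the distinguished ``middle'' child $\Lambda_d(k-1)$, and place the remaining subtrees by right-aligning $T_1,\dots,T_{l^*-1}$ into the left flank and left-aligning $T_{l^*+1},\dots,T_t$ into the right flank---is a genuinely different route from the paper, which instead greedily assigns $T_i$ to the $g(i)$-th slot via $g(i)=\max\{1+g(i-1),\,|T_i|-(k-d)+1\}$ and then proves $g(\ell)\le d$ by a counting argument involving the multiplicities $h_s$. You also correctly notice that the inductive hypothesis must be strengthened to ``at most $k$ vertices,'' a point the paper leaves implicit. Your Case A analysis is fine.

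However, the Case B justification has a concrete error. The claimed observation ``at most one $T_l$ can have size exceeding $k-d$'' is false: take $d=5$, $k=7$, and $T$ whose root has two subtrees of size $3$ each, so $k-d=2$ and both exceed it. In general two subtrees of size $\ge k-d+1$ are compatible with $S_t\le k-1$ whenever $k+t\le 2d-1$, which is a nonempty range for every $d\ge 3$. Your construction actually does survive such examples (here $l_0=1$ and the size-$3$ subtree $T_2$ lands in the rightmost-available right slot of capacity $k-\lceil d/2\rceil-1=3$), but not for the reason you give. What rescues it is the interplay of the partial-sum bounds $S_{l^*-1}\le\lfloor k/2\rfloor-1$ and $S_t-S_{l^*}\le\lceil k/2\rceil-1$ with the pointwise bounds $n_l\le S_{l^*-1}-(l^*-2)$ and $n_l\le (S_t-S_{l^*})-(t-l^*-1)$ tested against the exact slot capacities $k-d+i-1$ and $k-\lceil d/2\rceil-j$; in the clipped cases, the relevant side's capacities turn out to be at least $k-t$, matching the coarse bound, while the opposite side retains its sum bound. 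Since you explicitly defer to ``Case B bookkeeping'' and the stated observation would cause that bookkeeping, as described, to fail, this is a gap that needs to be patched---the alignment plan is sound, but the per-slot verification must be redone along the lines above rather than via the (false) at-most-one claim.
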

\begin{proof}
We proceed by strong induction on $k$, where the base case $k=1$ is trivial.  For the induction step, we begin with the case in which $2 \leq k \leq d$.  Let $T$ be a $[d]$-tree on $k$ vertices.  Let $T_1, \ldots, T_{\ell}$ be the subtrees of the root of $T$, from left to right. Of these $\ell$ trees, let $T_m$ be one with the most vertices.   We embed $T_1, \ldots, T_{\ell}$ into the subtrees of the root of $\Lambda_d(k)$ in a greedy way, with a preference for subtrees farther to the left.  We consider two cases based on the size of $T_m$.  If $|T_m|\geq\left\lceil \frac{k}{2} \right\rceil$, then by the induction hypothesis, we can embed $T_m$ in the subtree of the root of $\Lambda_d(k)$ that is isomorphic to $\Lambda_d(k-1)$ (with the roots coinciding). The remaining subtrees, which contain at most $\left\lfloor \frac{k}{2}\right\rfloor-1$ vertices in total, can easily be embedded in the smaller subtrees of the root of $\Lambda_d(k)$. Otherwise, $|T_m|\leq\left\lceil \frac{k}{2} \right\rceil-1 \leq \left\lfloor \frac{k}{2} \right\rfloor$.  In this case, we let $f(1)=|T_1|$ and, for $2\leq i\leq \ell$, let $f(i)=\max\{1+f(i-1),|T_i|\}$.  Note that $f$ is strictly increasing.  We have $$f(r) \leq \sum_{i=1}^r |T_i|\leq k-1-(\ell-r)$$ for each $r$, where the second inequality uses the fact that $|T_i|\geq 1$ for all $i$.  In particular, $f(\ell) \leq k-1$.  We now claim that we can embed each $T_r$ in the $f(r)^{\text{th}}$ subtree of the root of $\Lambda_d(k)$ (with the roots coinciding).  This is certainly possible when $f(r) \leq \left\lfloor \frac{k}{2} \right\rfloor$ by the induction hypothesis because $f(r)\geq |T_r|$. It is also possible when $f(r)> \left\lfloor \frac{k}{2} \right\rfloor$ because $f(r)\leq k-1-(\ell-r)$.  (These two statements also use the fact that $|T_m|\leq \left\lfloor \frac{k}{2} \right\rfloor$.) This completes the argument when $2 \leq k \leq d$.

We now assume $k>d$.  Let $T$ be a $[d]$-tree on $k$ vertices, and let $T_1, \ldots, T_{\ell}$ be the subtrees of its root, from left to right. Again, we have $|T_1|+\cdots+|T_{\ell}|=k-1$.  As above, it is easy to dispense with the case in which some $|T_i| \geq k-\left\lceil \frac{d}{2} \right\rceil$, so we restrict our attention to the case where $|T_i| \leq k-\left\lceil \frac{d}{2} \right\rceil-1\leq k-\left\lfloor \frac{d}{2} \right\rfloor-1$ for all $i$.  Let $g(1)=\max\{1,|T_1|-(k-d)+1\}$, and for $2\leq i\leq\ell$, let $g(i)=\max\{1+g(i-1),|T_i|-(k-d)+1\}$.  Note that $g$ is strictly increasing and
$$g(r)\leq\sum_{i=1}^r \max\{1,|T_i|-(k-d)+1\}$$ for every $r$.  In particular, if we let $h_s$ denote the number of trees in the set $T_1,\ldots,T_\ell$ with exactly $s$ vertices, then for $r=\ell$ we get 
\[g(\ell)\leq\sum_{i=1}^\ell\max\{1,|T_i|-(k-d)+1\}=\sum_{s=1}^{k-d}h_s+\sum_{s\geq k-d+1}(s-(k-d)+1)h_s\]
\[=\sum_{s\geq 1}sh_s-\sum_{s=1}^{k-d}(s-1)h_s-\sum_{s\geq k-d+1}(k-d-1)h_s=k-1-\sum_{s=1}^{k-d}(s-1)h_s-\sum_{s\geq k-d+1}(k-d-1)h_s.\] If $h_s\geq 1$ for some $s\geq k-d+1$, then this shows that $g(\ell)\leq d$. Otherwise, we have \[g(\ell)\leq k-1-\sum_{s=1}^{k-d}(s-1)h_s=k-1-\sum_{s\geq 1}(s-1)h_s=k-1-\sum_{s\geq 1}sh_s+\sum_{s\geq 1}h_s=k-1-(k-1)+\ell\leq d.\] In either case, $g(\ell)\leq d$. Since $g$ is strictly increasing, $g(r)\leq d-(\ell-r)$ for all $r\in\{1,\ldots,\ell\}$. 

We now claim that we can embed each $T_r$ in the $g(r)^{\text{th}}$ subtree of the root of $\Lambda_d(k)$ (with the roots coinciding).  As above, this is possible whenever $g(r) \leq \left\lceil \frac{d}{2} \right\rceil$ because $g(r) \geq |T_r|-(k-d)+1$.  It is also possible when $g(r)>\left\lceil \frac{d}{2} \right\rceil$ because $g(r)\leq d-(\ell-r)$. 
\end{proof}

We can also describe the sizes of these $k$-universal $[d]$-trees.  Let $L_d(k)$ denote the number of vertices in $\Lambda_d(k)$.  The following enumeration follows directly from the definition of $\Lambda_d(k)$.
\begin{proposition}\label{prop:0,...,d-contiguous-upper}
For fixed $d \geq 2$, the sequence $L_d(k)$ has the starting value $L_d(1)=1$.  For $2 \leq k \leq d$, we have the recurrence
$$L_d(k)=1+L_d(k-1)+\sum_{i=1}^{\left\lfloor \frac{k}{2} \right\rfloor-1} L_d(i)+\sum_{i=1}^{\left\lceil \frac{k}{2} \right\rceil-1} L_d(i).$$  For $k>d$, we have the recurrence
$$L_d(k)=1+L_d(k-1)+\sum_{i=k-d}^{k-\left\lfloor \frac{d}{2} \right\rfloor-2} L_d(i)+\sum_{i=k-d}^{k-\left\lceil \frac{d}{2} \right\rceil-1} L_d(i).$$
\end{proposition}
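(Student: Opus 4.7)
The plan is to observe that both recurrences are direct transcriptions of the recursive definition of $\Lambda_d(k)$. Since $\Lambda_d(k)$ is built by taking a single root vertex and attaching a prescribed list of smaller trees $\Lambda_d(k_1),\ldots,\Lambda_d(k_\ell)$ as its subtrees, the vertex count must satisfy
\[L_d(k) \;=\; 1 + \sum_{j=1}^\ell L_d(k_j),\]
and the only remaining task is to regroup the indices $k_j$ so as to match the form in the statement.

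First I would dispense with the base case $L_d(1)=1$, which is immediate since $\Lambda_d(1)$ consists of a single vertex with no subtrees. For $2\leq k\leq d$, the construction specifies $k-1$ subtrees attached to the root: $\Lambda_d(1),\Lambda_d(2),\ldots,\Lambda_d(\lfloor k/2\rfloor-1)$ on the left of a ``middle'' subtree $\Lambda_d(k-1)$, followed by $\Lambda_d(\lceil k/2\rceil-1),\Lambda_d(\lceil k/2\rceil-2),\ldots,\Lambda_d(1)$ on the right. Adding $1$ for the root and summing the sizes of these subtrees produces the first recurrence exactly.

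For $k>d$, the construction specifies exactly $d$ subtrees: the left-hand subtrees $\Lambda_d(k-d),\Lambda_d(k-d+1),\ldots,\Lambda_d(k-\lfloor d/2\rfloor-2)$ (of which there are $\lceil d/2\rceil-1$), then the middle subtree $\Lambda_d(k-1)$, then the right-hand subtrees $\Lambda_d(k-\lceil d/2\rceil-1),\Lambda_d(k-\lceil d/2\rceil-2),\ldots,\Lambda_d(k-d)$ (of which there are $\lfloor d/2\rfloor$). Adding $1$ for the root and summing the sizes over the two index ranges yields the second recurrence.

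There is no substantive obstacle here; the proof is pure bookkeeping. The only care required is to check that the two index ranges in each displayed recurrence exactly correspond to the lists of subtrees described recursively — in particular, verifying that the left range $\{k-d,\ldots,k-\lfloor d/2\rfloor-2\}$ and right range $\{k-d,\ldots,k-\lceil d/2\rceil-1\}$ cover the intended subtrees without repetition, and that their sizes sum to $d-1$, so that together with the middle subtree they exhaust the $d$ children of the root.
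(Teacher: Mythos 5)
Your argument is correct and is exactly what the paper intends; the paper gives no explicit proof, stating only that the enumeration ``follows directly from the definition of $\Lambda_d(k)$,'' and your bookkeeping — adding $1$ for the root and summing the sizes of the prescribed subtrees, with the verification that $(\lceil d/2\rceil-1)+1+\lfloor d/2\rfloor = d$ children in the $k>d$ case — fills that in faithfully.
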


\begin{corollary}\label{cor:rhoestimate}
Let \[p_d(x)=1-x-\sum_{i=\left\lfloor\frac{d}{2}\right\rfloor+2}^dx^i-\sum_{i=\left\lceil\frac{d}{2}\right\rceil+1}^dx^i.\] Let $\rho_d=1/x_d$, where $x_d$ is the smallest positive real root of $p_d(x)$. For each fixed $d$, we have \[N_{[d]}^{\con}(k)\leq L_d(k)=(\rho_d+o(1))^k.\] Furthermore, as $d\to\infty$, we have \[\rho_d=1+\frac{4\log d}{d}-\frac{4\log\log d}{d}+o\left(\frac{\log\log d}{d}\right).\]
\end{corollary}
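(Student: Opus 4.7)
The plan is to handle the two claims separately: the growth rate $L_d(k) = (\rho_d + o(1))^k$ (for each fixed $d$) and the asymptotic expansion of $\rho_d$ (as $d \to \infty$).

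For the growth rate, I would first verify the basic analytic properties of $p_d$. Since $p_d'(x) = -1 - \sum_{i=\lfloor d/2\rfloor+2}^d i x^{i-1} - \sum_{i=\lceil d/2\rceil+1}^d i x^{i-1} \leq -1$ for $x \geq 0$, the polynomial $p_d$ is strictly decreasing; combined with $p_d(0) = 1 > 0$ and $p_d(1) = -(d-1) < 0$, this shows $x_d$ is the unique positive real root, lies in $(0,1)$, and satisfies $p_d > 0$ on $(0, x_d)$ while $p_d < 0$ on $(x_d, \infty)$. To prove the upper bound, fix any $\rho > \rho_d$ and set $x = 1/\rho < x_d$; I induct on $k$, using the recurrence from Proposition~\ref{prop:0,...,d-contiguous-upper}, and substitute the inductive hypothesis $L_d(i) \leq C\rho^i$ to get (after reindexing $j = k - i$)
\[
L_d(k) \leq 1 + C\rho^k\bigl(x + \textstyle\sum_{j=\lfloor d/2\rfloor+2}^d x^j + \sum_{j=\lceil d/2\rceil+1}^d x^j\bigr) = 1 + C\rho^k(1 - p_d(x)).
\]
Since $p_d(x) > 0$ and $\rho > 1$, the additive constant is absorbed into $C\rho^k p_d(x)$ once $k$ is large, and the finitely many base cases (including $2 \leq k \leq d$, which obey a different recurrence) are handled by enlarging $C$. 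The matching lower bound $L_d(k) \geq c(\rho_d - \epsilon)^k$ is completely analogous, with $p_d(x) < 0$ making the induction easier (the additive $1$ can simply be discarded).

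For the asymptotic expansion, I would parametrize $x_d = e^{-\delta}$ and expect $\delta \to 0$ while $d\delta \to \infty$. Using the geometric sum formula $\sum_{i=a}^d e^{-i\delta} = (e^{-a\delta} - e^{-(d+1)\delta})/(1 - e^{-\delta})$ with $a \in \{\lfloor d/2\rfloor + 2,\, \lceil d/2\rceil + 1\}$, both sums in $p_d$ equal $\frac{e^{-d\delta/2}}{\delta}(1 + o(1))$ (since $e^{-(d+1)\delta} = o(e^{-d\delta/2})$ and $1 - e^{-\delta} = \delta(1 + O(\delta))$). Plugging into $p_d(x_d) = 0$ and using $1 - e^{-\delta} = \delta(1+O(\delta))$ gives the implicit equation
\[
\delta = \frac{2 e^{-d\delta/2}}{\delta}(1 + o(1)), \quad\text{equivalently}\quad d\delta = -4\log\delta + O(1).
\]
Substituting the ansatz $\delta = \frac{4\log d}{d}\,\beta$ and iterating once yields $\beta = 1 - \frac{\log\log d}{\log d} + O(1/\log d)$, so $\delta = \frac{4\log d - 4\log\log d}{d} + O(1/d)$. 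Finally, $\rho_d = e^{\delta} = 1 + \delta + O(\delta^2)$, and since $\delta^2 = O(\log^2 d/d^2) = o(\log\log d/d)$ and $O(1/d) = o(\log\log d/d)$, this matches the stated expansion.

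The main technical obstacle is the careful bookkeeping of error terms when reducing $p_d(x_d) = 0$ to the leading-order equation $d\delta = -4\log\delta + O(1)$: one must check that the differences between $\lfloor d/2\rfloor + 2$, $\lceil d/2\rceil + 1$, and $d/2$ contribute only multiplicative $1 + o(1)$ factors, that truncating the tail $e^{-(d+1)\delta}$ in the geometric sum costs no more than a $1 + o(1)$ factor, and that the Taylor remainders in $1 - e^{-\delta} = \delta(1 + O(\delta))$ do not pollute the $-\log\log d/d$ correction. Each of these is routine but requires verifying that the $O(1)$ slack in the implicit equation $d\delta = -4\log\delta + O(1)$ translates to an $O(1/\log d)$ error in $\beta$, hence to the $o(\log\log d/d)$ remainder in the final asymptotic for $\rho_d$.
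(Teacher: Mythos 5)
Your proposal is correct, and both halves are sound, but the first half takes a genuinely different route from the paper. For the growth rate $L_d(k)=(\rho_d+o(1))^k$, the paper forms the generating function $G_d(x)=\sum_{k\ge 1}L_d(k)x^k$, observes that it is rational with denominator $p_d(x)$, and invokes Pringsheim's theorem to identify $x_d$ as the radius of convergence. You instead run a direct two-sided induction: substituting $L_d(i)\le C\rho^i$ with $x=1/\rho$ into the recurrence for $k>d$ and reindexing $j=k-i$ produces exactly $1+C\rho^k(1-p_d(x))$, and the sign of $p_d(x)$ on either side of $x_d$ closes both inductions. This is more elementary — no appeal to analytic combinatorics or to the structure of rational functions — and it makes the role of $p_d$ completely transparent, at the modest cost of having to discuss the additive constant $1$ and the base-case constant $C$ explicitly (which you handle correctly). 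For the asymptotic expansion of $\rho_d$, the two arguments are essentially the same implicit-equation analysis; you set $x_d=e^{-\delta}$ while the paper sets $x_d=1-\varepsilon_d/c$ with $c=(d+1)/2$, and both reduce to the same relation $\delta^2\approx 2e^{-d\delta/2}$ before solving via a logarithmic ansatz (the paper phrases the final step via the Lambert $W$ function; your one-step iteration of $d\delta=-4\log\delta+O(1)$ is equivalent). One small advantage of your version is that the floor/ceiling shifts in the lower summation limits are treated uniformly for both parities, whereas the paper only writes out the odd case. Both you and the paper leave the preliminary facts $x_d\to 1$ and $d\delta\to\infty$ (equivalently $\varepsilon_d\to\infty$) as unproven assertions; these are easy to verify from $p_d(x_d)=0$ and the uniqueness of the positive root, but they are needed to justify the tail-dropping estimates, and it would be worth stating that verification rather than saying you merely ``expect'' it.
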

\begin{proof}
The inequality $N_{[d]}^{\con}(k)\leq L_d(k)$ follows immediately from Theorem \ref{thm:0,...,d-upper-bound}. To see that $L_d(k)=(\rho_d+o(1))^k$, we let $G_d(x)=\sum_{k\geq 1}L_d(k)x^k$. Using the recurrence in Proposition \ref{prop:0,...,d-contiguous-upper}, it is straightforward to check that $G_d(x)$ is a rational function with denominator $p_d(x)$. Since $G_d(x)$ has nonnegative coefficients, it follows from Pringsheim's theorem \cite[Chapter IV]{FlajoletBook} that $x_d$ is the radius of convergence of $G_d(x)$. This means that $L_d(k)=(\rho_d+o(1))^k$. 

To prove the last statement of the corollary, we consider only the case in which $d$ is odd; the argument is similar when $d$ is even. All asymptotics are as $d\to\infty$. First, note that $$p_d(x)=1-x-2(x^{\frac{d+1}{2}+1}+x^{\frac{d+1}{2}+2}+\cdots+x^d)=1-x-2\frac{x^{c+1}-x^{2c}}{1-x},$$ where $c=\frac{d+1}{2}$.  We have \[(1-x_d)^2-2x_d^{c+1}+2x_d^{2c}=0.\] The additional substitution $x_d=1-\frac{\varepsilon_d}{c}$ (where clearly $\varepsilon_d>0$ since $L_d(k)$ is growing with $k$) gives
$$\left(\frac{\varepsilon_d}{c}\right)^2-2\left(1-\frac{\varepsilon_d}{c}\right)^{c+1}+2\left(1-\frac{\varepsilon_d}{c}\right)^{2c}=0.$$  One can show that $x_d\to 1$, so $\frac{\varepsilon_d}{c}\to 0$.
Now, $2\left(1-\frac{\varepsilon_d}{c}\right)^{c+1}=2e^{-\varepsilon_d}+o(e^{-\varepsilon_d})$ and $2\left(1-\frac{\varepsilon_d}{c}\right)^{2c}=o(e^{-\varepsilon_d})$. This means that \[\left(\frac{\varepsilon_d}{c}\right)^2=2e^{-\varepsilon_d}(1+o(1))\] and \[\frac{\varepsilon_d}{c}=\sqrt{2}e^{-\varepsilon_d/2}(1+o(1)).\] Rearranging, we find that \[\frac{\varepsilon_d}{2}e^{\varepsilon_d/2}=\frac{c}{\sqrt 2}(1+o(1)).\] Therefore, \[\varepsilon_d=2W\left(\frac{c}{\sqrt 2}(1+o(1))\right)=2\log c-2\log\log c+o(\log\log c),\] where $W$ is the Lambert $W$ function. The desired result follows.
\end{proof}

\subsection{Noncontiguous containment}\label{subsec:0,1,...,d-noncontiguous}

The following theorem relates noncontiguous $k$-universal $[d]$-trees with noncontiguous $k$-universal $d$-ary plane trees. In particular, it shows that the minimum sizes of these trees differ by at most a constant factor. 

\begin{theorem}\label{thm:0,...,d-noncontiguous}
For all integers $d \geq 2$ and $k \geq 1$, we have
$$N_{[d]}^{\non}(k) \leq N_{d\ary}^{\non}(k) \leq d (N_{[d]}^{\non}(k)-1)+1.$$
\end{theorem}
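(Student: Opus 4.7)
My plan for $N_{[d]}^{\non}(k) \leq N_{d\ary}^{\non}(k)$ is to use the natural ``forgetting'' map $F$ that sends a $d$-ary plane tree to the $[d]$-tree obtained by discarding edge-type labels and empty subtrees. I would verify that every legal $d$-ary contraction of an edge $uv$ maps under $F$ to a legal $[d]$-contraction of $F(uv)$; this reduces to showing that the $d$-ary legality condition—that the non-contracted children of $u$ lie entirely on one side of $\chi(v)$—forces $|\chi(u)| + |\chi(v)| \leq d + 1$, which is exactly the $[d]$-legality condition. Since each $[d]$-tree on $k$ vertices lifts (by picking any valid position assignment) to some $d$-ary plane tree on $k$ vertices, starting from a noncontiguous $k$-universal $d$-ary plane tree $\mathcal T$ with $N_{d\ary}^{\non}(k)$ vertices produces $F(\mathcal T)$ as a noncontiguous $k$-universal $[d]$-tree with $|F(\mathcal T)|=|\mathcal T|$ vertices, giving the inequality.

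\textbf{Second inequality: the construction.} For $N_{d\ary}^{\non}(k) \leq d(N_{[d]}^{\non}(k) - 1) + 1$, I will start with a noncontiguous $k$-universal $[d]$-tree $\mathbf T$ on $N := N_{[d]}^{\non}(k)$ vertices and build an explicit $d$-ary plane tree $\widehat{\mathbf T}$ of size $d(N - 1) + 1$. The construction replaces each edge $uv$ of $\mathbf T$ (where $v$ is the $j^{\text{th}}$ of $u$'s $m$ children) with a chain $u = a_0 \to a_1 \to \cdots \to a_{d-1} \to a_d = v$ of $d$ edges in $\widehat{\mathbf T}$, assigning the top $d - j + 1$ edges types $j, j+1, \ldots, d$ (from top to bottom) and the remaining $j - 1$ edges arbitrary type $1$. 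The vertex count is $1 + (d - 1)(N - 1) + (N - 1) = d(N - 1) + 1$, as required. The chain is engineered so that contracting the first $i - j$ of its top edges legally shifts the ``effective position'' at which $v$ hangs off $u$ from $j$ to any prescribed $i \in \{j, j+1, \ldots, d\}$, after which the remaining edges can be collapsed bottom-up (vacuously legally, since each intermediate $a_\ell$ has a unique child).

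\textbf{Verification and main obstacle.} To show that $\widehat{\mathbf T}$ noncontiguously contains an arbitrary $d$-ary plane tree $T$ on $k$ vertices, I would fix an embedding of $\bar T := F(T)$ into $\mathbf T$ furnished by the universality of $\mathbf T$, and then exhibit a legal $d$-ary contraction sequence $\widehat{\mathbf T} \to T$ guided by this embedding. Working top-down, at each embedded vertex $u$ with $\bar T$-children $v_1, \ldots, v_m$ whose target positions in $T$ are $i_1 < \cdots < i_m$, I process the children in \emph{decreasing} order of target position: for each $v_k$ in turn, I contract the top edges of its chain, which steps the chain's root position through $k, k+1, \ldots, i_k$. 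This is legal at every step because the already-repositioned children $v_{k+1}, \ldots, v_m$ occupy positions $i_{k+1}, \ldots, i_m$ all strictly greater than $i_k$, while the unprocessed children $v_1, \ldots, v_{k-1}$ remain at positions $1, \ldots, k-1$ all strictly less than $k$, so the intermediate positions traversed by $v_k$ are unoccupied. Chains corresponding to $\mathbf T$-edges absent from $\bar T$ must also be collapsed to realize the original $[d]$-contractions in $\mathbf T$. The main obstacle I anticipate is coordinating these two kinds of chain manipulations at a single vertex: naively collapsing an internal chain can leave $u$'s surviving chain-tops blocking the positions needed by the collapse, so one must pre-reposition the surviving chains (using the same decreasing-target trick) before each internal-chain collapse. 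The bookkeeping is delicate, but no ideas beyond the decreasing-target-position strategy are needed.
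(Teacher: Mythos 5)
Your argument for the first inequality $N_{[d]}^{\non}(k)\leq N_{d\ary}^{\non}(k)$ is correct and matches the paper's (the paper simply declares it ``straightforward''); your observation that $d$-ary legality of contracting $e=uv$ forces $|\chi(u)|+|\chi(v)|\leq d+1$ is exactly the right compatibility check for the forgetting map.

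The second inequality is where the proposal breaks down, because your chain is genuinely different from the paper's and is missing types. The paper replaces an edge at position $j$ by a chain whose topmost edge has type $j$ and whose remaining $d-1$ edges have types $1,2,\dots,d$ \emph{skipping} $j$, so that \emph{every} type in $[d]$ appears exactly once. Your chain uses types $j, j+1,\dots,d$ followed by $j-1$ copies of type~$1$; for $j\geq 3$ the types $2,\dots,j-1$ are absent. Since the blue (surviving) edge must literally have the type required by $T'$, and contraction of an edge $u\to a_\ell$ merely replaces the slot of type $j$ in $\chi(u)$ by the slot of the next chain edge, your chain can only produce an edge of type in $\{j,j+1,\dots,d\}\cup\{1\}$. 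But the type $i$ demanded by $T'$ can be strictly between $1$ and $j$: for instance with $d=3$, if $\mathbf T$ has a vertex $u$ with children $c_1,c_2,c_3$ where $c_3$ alone has a descendant, then the unique way to realize a depth-$2$ path $T$ inside $\mathbf T$ keeps the position-$3$ edge $u\to c_3$, yet the corresponding $d$-ary pattern $T'$ may assign that edge type $2$; your chain (types $3,1,1$) simply has no type-$2$ edge to keep. Relatedly, your verification step asserts that processing $v_k$ ``steps the chain's root position through $k, k+1,\dots,i_k$,'' but the chain actually starts at the $\mathbf T$-position $j_k$, not at $k$, and these need not coincide once some of $u$'s $\mathbf T$-children are absorbed into the blob; the ``decreasing-target'' and ``pre-repositioning'' tricks only ever move a chain top rightward, so they cannot repair a case where $j_k>i_k$. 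The paper avoids all of this by putting all $d$ types in every chain and then proving (via the $a_i,b_i,c_i$ contradiction argument on maximal runs of consecutive red edges) that some red edge in each run can always be legally contracted. To fix your proposal you would need to change the chain so that every type appears, and then you would essentially be forced into the paper's legality analysis rather than the position-stepping heuristic.
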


\begin{proof}
The first inequality is straightforward because if we are given a noncontiguous $k$-universal $d$-ary plane tree ${\bf T}'$, then we can obtain a noncontiguous $k$-universal $[d]$-tree $\bf T$ by ``forgetting'' the exact types of all of the edges in ${\bf T}'$. In other words, we interpret ${\bf T}'$ as a $[d]$-tree.

For the second inequality, suppose $\bf T$ is a noncontiguous $k$-universal $[d]$-tree on $n$ vertices.  We obtain a noncontiguous $k$-universal $d$-ary plane tree ${\bf T}'$ on $d(n-1)+1$ vertices by doing the following for each edge $e$ in $\bf T$.  Among all edges with the same parent vertex as $e$, suppose $e$ is the $i^{\text{th}}$ from the left.   We replace the edge $e$ (along with its endpoints) with a $d$-ary plane tree path on $d$ edges whose topmost edge has type $i$ and whose remaining edges have types $1$ through $d$, skipping $i$.  Note that $|{\bf T}'|= d(|{\bf T}|-1)+1$ since each of the $|{\bf T}|-1$ edges in $\bf T$ has become $d$ edges in ${\bf T}'$.

We claim that ${\bf T}'$ is in fact a noncontiguous $k$-universal $d$-ary plane tree.  Let $T'$ be a $d$-ary plane tree on $k$ vertices, and let $T$ be the corresponding $[d]$-tree that is obtained by forgetting the types of the edges in $T'$.  By hypothesis, $\bf T$ noncontiguously contains $T$.  For each edge $e'$ in $T'$, let $e$ be the corresponding edge in $T$.  Let $\bf e$ be the edge in $\bf T$ that corresponds to $e$ in the noncontiguous embedding of $T$ in $\bf T$.  Recall that $\bf e$ becomes $d$ edges, one of each type, in ${\bf T}'$; let ${\bf e}'$ be the edge among these with the same type as $e'$.  Color every such edge ${\bf e}'$ blue, and color all other edges of ${\bf T}'$ red.  It is clear that if we contract away all of the red edges, the blue edges will form a copy of $T'$, so it remains only to show that a sequence of legal contractions exists.

We begin by contracting every red edge whose bottom vertex is a leaf.  We continue this process until every leaf is incident to a blue edge.  We contract the remaining red edges, starting with those at the greatest depth (i.e., farthest from the root) and working our way upwards.  So we can always assume that all edges of greater depth than our red edges of interest are blue.  If the top vertex of a red edge has no other nonempty subtree, then we can legally contract that red edge.  We are now left with the case where the top vertex $v$ of our red edge $r$ has multiple children.  Consider the nonempty subtrees of $v$, from left to right.  If $r$ is not adjacent to another red edge, then we can contract $r$ immediately.  Otherwise, there are consecutive red edges $r_1, \ldots, r_s$ ($s \geq 2$).  We will show that there is some $r_i$ that we can legally contract; we will then be able to sequentially contract the remaining edges by induction.


Let each $r_i$ have edge type $a_i$.  Let $b_i$ denote the minimum type of a (necessarily blue) edge directly below $r_i$, and let $c_i$ denote the maximum type of a (necessarily blue) edge directly below $r_i$.  It follows from our construction that $$a_1<\cdots<a_s \quad \text{and} \quad b_1\leq c_1<b_2 \leq c_2\cdots<b_s\leq c_s.$$  The condition for being able to legally contract $r_1$ is $c_1<a_2$, and the condition for being able to legally contract $r_s$ is $a_{s-1}<b_s$.  For $2 \leq i \leq s-1$, the conditions for being able to contract $r_i$ are $a_{i-1}<b_i$ and $c_i<a_{i+1}$.  Assume (for contradiction) that we cannot legally contract any  of the edges $r_1,\ldots,r_s$.  Since we cannot contract $r_1$, we must have $c_1 \geq a_2$.  Since we cannot contract $r_2$, we must have either $a_1 \geq b_2$ or $c_2 \geq a_3$.  In the first case, we get
$$b_2\leq a_1<a_2\leq c_1<b_2,$$
which is a contradiction, so we conclude that $c_2 \geq a_3$.  Similarly, since we cannot contract $a_3$, we have either $a_2 \geq b_3$ or $c_3 \geq a_4$, and the first possibility yields a contradiction in the same way.  Continuing this line of reasoning, we arrive at $c_{s-1}\geq a_s$.  Then
$$a_{s-1}<a_s\leq c_{s-1}<b_s$$
tells us that we can legally contract $r_s$, so we are done.  This demonstrates that we can legally contract all of the red edges. 
\end{proof}

Now that we have established this connection between $d$-ary plane trees and $[d]$-trees, we revisit the construction of the trees $\xi_d(k)$ from Section~\ref{subsec:d-ary-noncontiguous}.  Because $[d]$-trees have more ``flexibility'' than $d$-ary plane trees, we can use a slightly better (and simpler!) construction to beat the first inequality in Theorem~\ref{thm:0,...,d-noncontiguous}.  We call these new noncontiguous $k$-universal $[d]$-trees $\Xi_d(k)$.

First, we use the path on $2$ vertices instead of the $d$-crescent.  If $d>2$, we define the \textit{modified $d$-vertebra} to be the $[d]$-tree on $4$ vertices in which the root has $3$ children; when $d=2$, the \emph{modified $2$-vertebra} is the $[2]$-tree with $5$ vertices in which the root has $2$ children and the left child of the root has $2$ children.  As in the case of the $d$-vertebra, we can identify the left, middle, and right children of the modified vertebra in the obvious way.  We then construct the $m^{\text{th}}$ spine exactly as in Section~\ref{subsec:d-ary-noncontiguous}.

Our recursive definition of the families $\Xi_d(k)$ resembles the presentation of Section~\ref{subsec:d-ary-noncontiguous}.  We begin with the following base cases:
\begin{itemize}
    \item Let $\Xi_d(1)$ consist of a single vertex.
    \item Let $\Xi_d(2)$ be the path on $2$ vertices (scil., the analogue of the crescent).
    \item Obtain $\Xi_d(3)$ from the path on $2$ vertices by giving the bottom vertex $2$ children.
\end{itemize}
The construction for larger $k$ is recursive and differs for $d=2$ and $d>2$.  If $d=2$, then for $k \geq 4$, we obtain $\Xi_2(k)$ from the $\left(\left\lfloor \frac{k}{2}\right\rfloor-1\right)^{\text{th}}$ $2$-spine as follows:
\begin{enumerate}
    \item For each $1 \leq i \leq \left\lfloor \frac{k}{2}\right\rfloor-2$, glue a copy of $\Xi_2(i)$ to each of the left and right leaves of the $i^{\text{th}}$ modified $2$-vertebra.
    \item Glue a copy of $\Xi_2(\left\lfloor \frac{k}{2}\right\rfloor-1)$ to the right leaf of the $\left(\left\lfloor \frac{k}{2}\right\rfloor-1\right)^{\text{th}}$ (i.e., lowest) modified $2$-vertebra.
    \item Glue a copy of $\Xi_2(\left\lceil\frac{k}{2}\right\rceil-1)$ to the left leaf of the $\left(\left\lfloor \frac{k}{2}\right\rfloor-1\right)^{\text{th}}$ modified $2$-vertebra.
    \item Glue a copy of $\Xi_2(\left\lceil\frac{k}{2}\right\rceil)$ to the center leaf of the $\left(\left\lfloor \frac{k}{2}\right\rfloor-1\right)^{\text{th}}$ modified $2$-vertebra.
\end{enumerate}

If $d>2$, then for $k \geq 4$, we obtain $\Xi_d(k)$ from the $\left(\left\lfloor \frac{k}{2}\right\rfloor-1\right)^{\text{th}}$ $d$-spine as follows:
\begin{enumerate}
    \item For each $1 \leq i \leq \left\lfloor \frac{k}{2}\right\rfloor-2$, glue a copy of $\Xi_d(i)$ to each of the left and right leaves of the $i^{\text{th}}$ modified $d$-vertebra.
    \item Glue a copy of $\Xi_d(\left\lfloor \frac{k}{2}\right\rfloor-1)$ to the right leaf of the $\left(\left\lfloor \frac{k}{2}\right\rfloor-1\right)^{\text{th}}$ (i.e., second-lowest) modified $d$-vertebra.
    \item Glue a copy of $\Xi_d(\left\lceil\frac{k}{2}\right\rceil-1)$ to the left leaf of the $\left(\left\lfloor \frac{k}{2}\right\rfloor-1\right)^{\text{th}}$ modified $d$-vertebra.
    \item Glue a copy of $\Xi_d(\left\lceil\frac{k}{2}\right\rceil)$ to the center leaf of the $\left\lfloor \frac{k}{2}\right\rfloor^{\text{th}}$ (i.e., lowest) modified $d$-vertebra.
    \item Glue a copy of $\Xi_d\left( \left\lfloor \frac{k+1}{4} \right\rfloor \right)$ to each of the left and right leaves of the $\left\lfloor \frac{k}{2}\right\rfloor^{\text{th}}$ $d$-vertebra.
\end{enumerate}
For $k\geq 4$, we still say that the \textit{tail} of $\Xi_d(k)$ is the copy of $\Xi_d(\left\lceil \frac{k}{2} \right\rceil)$ that is glued to the center leaf of the bottom of the spine in step (4).  We remark that the trees $\Xi_3(k), \Xi_4(k),\ldots$ are all identical.

We omit the proof of the following theorem because it is identical to the proof of Theorem~\ref{thm:colorful-construction}.
\begin{theorem}\label{thm:modified-colorful-construction}
For any integers $d \geq 2$ and $k \geq 1$, the tree $\Xi_d(k)$ noncontiguously contains all $[d]$-trees with $k$ vertices.
\end{theorem}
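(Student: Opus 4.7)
The plan is to repeat the proof of Theorem~\ref{thm:colorful-construction} verbatim with the natural substitutions: the path on two vertices replaces the $d$-crescent, and the modified $d$-vertebra replaces the $d$-vertebra. I would proceed by strong induction on $k$, with the cases $k\le 3$ handled directly by the explicit definitions of $\Xi_d(k)$.

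For $k\ge 4$ and a $[d]$-tree $T$ on $k$ vertices, I would construct the same sequence of pairs $(T_i,v_i)$ used before. Start with $T_0=T$ and $v_0$ the root; at each step, descend from $v_i$ to the larger-subtree child (breaking ties toward the right) when $v_i$ has at most two children; and, when $v_i$ has three or more children (possible only for $d>2$), peel off all but the larger of the extreme nonempty subtrees by inserting a red edge whose legal contraction recovers $T_i$ from $T_{i+1}$. Terminate with the first $v_m$ whose subtree has at most $\lceil k/2\rceil$ vertices, obtaining a tree $T'$ that noncontiguously contains $T$. Red-edge insertions in the $[d]$-tree setting are if anything easier to justify than in the $d$-ary plane tree setting, because the contraction-legality condition for $[d]$-trees only involves the total child count and is automatically satisfied here.

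Next, I would march $v_0,\ldots,v_m$ down the spine of $\Xi_d(k)$, placing $v_i$ at the top of the $f_*(i)^{\text{th}}$ modified vertebra (or at the top of the tail, when $i=m$), using the same function $f_*$ as before. The branching subtree $\tau_i$ of size $s_i$ embeds into the glued copy of $\Xi_d(s_i)$ on the appropriate left or right leaf by the inductive hypothesis, and the subtree below $v_m$ embeds into the tail by the inductive hypothesis applied to $\Xi_d(\lceil k/2\rceil)$. The key structural fact enabling this is that the modified $d$-vertebra still has three distinguished leaves (left, center, right) and can be legally contracted either to a single edge or to a two-edge configuration branching off to the left or right, which is exactly what is needed to simulate one descending step of $T'$ along the spine.

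The inequalities controlling $f_*$ transfer verbatim: induction gives $f_*(r)\le\sum_{i=0}^{r}\max\{1,s_i\}$, the termination criterion forces $\sum_{i=0}^{m-2}\max\{1,s_i\}\le\lfloor k/2\rfloor-1$, and the same parity-type refinement rules out simultaneous tightness in the $d=2$ case while giving the bound $s_{m-1}\le\lfloor(k+1)/4\rfloor$ when $f_*(m-1)=\lfloor k/2\rfloor$ in the $d>2$ case. I expect the main (and essentially only) obstacle is re-verifying the partial-contraction properties of the modified vertebra highlighted above---a finite case check since the modified vertebra has at most five vertices---after which every step in the proof of Theorem~\ref{thm:colorful-construction} goes through word-for-word.
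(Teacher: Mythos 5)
Your proposal matches the paper exactly: the paper omits the proof of this theorem with the remark that it is identical to that of Theorem~\ref{thm:colorful-construction}, which is precisely the verbatim transfer you describe. Your finite check of the modified vertebra's contraction properties (removing the left or right leaf, then absorbing the intermediate vertex when $d=2$) is the only new content needed, and it goes through as you indicate.
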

Also as before, simple counting gives a recursive formula for the number of vertices in $\Xi_d(k)$, which we denote $M'_d(k)$.
\begin{proposition}\label{prop:modified-colorful-size}
For fixed $d$, the sequence $M'_d(k)$ has the initial conditions
\[M_d'(1)=1,\quad M_d'(2)=2,\quad M_d'(3)=4.
\]
For $k \geq 4$, it obeys the recurrence
\begin{align*}
M'_d(k) &=2+(3+\delta_{d,2})\left( \textstyle{\left\lfloor \frac{k}{2} \right\rfloor}-\delta_{d,2} \right)+2\sum_{i=1}^{\left\lfloor \frac{k}{2} \right\rfloor-2} (M'_d(i)-1) +M'_d\left(\textstyle{\left\lfloor \frac{k}{2} \right\rfloor}-1\right)-1\\
 &+M'_d\left(\textstyle{\left\lceil \frac{k}{2} \right\rceil}-1\right)-1+M'_d\left(\textstyle{\left\lceil \frac{k}{2} \right\rceil}\right)-1 +2(1-\delta_{d,2})\left( M_d\left(\textstyle{ \left\lfloor \frac{k+1}{4} \right\rfloor} \right)-1\right).\end{align*}
\end{proposition}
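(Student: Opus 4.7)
The proposition is established by a direct enumeration of the vertices in $\Xi_d(k)$ as prescribed by the recursive construction. The plan is to verify the base cases from the definitions and, for $k \geq 4$, to decompose $\Xi_d(k)$ into the modified $d$-spine plus the subtrees glued on in steps (1)--(5), counting each contribution while carefully tracking the vertices that are identified under gluing.

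For the base cases, the counts $M'_d(1)=1$, $M'_d(2)=2$, and $M'_d(3)=4$ are immediate from the definitions: $\Xi_d(1)$ is a single vertex, $\Xi_d(2)$ is the path on two vertices, and $\Xi_d(3)$ is obtained by attaching two children to the bottom of that path, giving a ``Y''-shape with four vertices, independently of $d$.

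For $k \geq 4$, I would first compute the size of the underlying modified $d$-spine. The modified $d$-vertebra has $4+\delta_{d,2}$ vertices (the root with three leaf-children for $d>2$; an extra pair of grandchildren in the $d=2$ case), so each gluing of a modified vertebra to the center leaf of the previous one adds exactly $3+\delta_{d,2}$ new vertices. Combined with the $2$ vertices of the topmost path, an $m^{\text{th}}$ modified spine has $2+(3+\delta_{d,2})m$ vertices. Taking $m=\lfloor k/2\rfloor-\delta_{d,2}$ (the number of vertebrae actually present in $\Xi_d(k)$, consistent with the $\lfloor k/2\rfloor^{\text{th}}$ vertebra invoked in steps (4)/(5) when $d>2$) yields the leading $2+(3+\delta_{d,2})(\lfloor k/2\rfloor-\delta_{d,2})$ summands of the claimed recurrence.

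The remaining summands record the gluings in steps (1)--(5), where each copy of $\Xi_d(j)$ attached to a spine leaf contributes $M'_d(j)-1$ new vertices (the root of the glued copy is identified with the leaf). Summing these contributions gives $2\sum_{i=1}^{\lfloor k/2\rfloor-2}(M'_d(i)-1)$ for step (1), an $M'_d(\lfloor k/2\rfloor-1)-1$ term for step (2), an $M'_d(\lceil k/2\rceil-1)-1$ term for step (3), an $M'_d(\lceil k/2\rceil)-1$ term for step (4), and, only when $d>2$, a $2(M'_d(\lfloor(k+1)/4\rfloor)-1)$ contribution for step (5) (encoded by the factor $1-\delta_{d,2}$). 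Adding all contributions reproduces the stated formula. There is no real obstacle here; the proof is straightforward bookkeeping, the only mild subtlety being to disambiguate the spine indexing for $d>2$ by cross-referencing with the analogous computation for $M_d$ in Proposition~\ref{prop:colorful-size}.
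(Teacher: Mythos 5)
Your proof is correct and amounts to exactly the same direct vertex count the paper tacitly performs: the paper simply asserts that the recurrence ``follows from simple counting'' and omits the derivation. You also rightly use $M'_d$ in the step-(5) contribution (the paper's displayed formula has a typographical $M_d$ there) and correctly read off $\lfloor k/2\rfloor - \delta_{d,2}$ vertebrae from steps (4)--(5), resolving the small indexing inconsistency in the paper's description of the $d>2$ spine.
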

The proof of Corollary~\ref{cor:colorful} carries through to show that $M'_d(k)=k^{\frac{1}{2}\log_2(k)(1+o(1))}$. 

\begin{corollary}\label{cor:modified-colorful}
For fixed $d \geq 2$, we have
$$N_{[d]}^{\non}(k)\leq M'_d(k)=k^{\frac{1}{2}\log_2(k)(1+o(1))}.$$
\end{corollary}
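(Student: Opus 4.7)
The inequality $N_{[d]}^{\non}(k)\leq M'_d(k)$ is immediate from Theorem~\ref{thm:modified-colorful-construction}, so the heart of the matter is establishing the double-sided asymptotic $M'_d(k)=k^{\frac{1}{2}\log_2(k)(1+o(1))}$. My plan is to imitate the proof of Corollary~\ref{cor:colorful} verbatim, verifying that the polynomial-order discrepancies between the recurrences for $M_d(k)$ and $M'_d(k)$ do not disturb the doubly-exponential-in-$\log k$ growth rate.

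For the upper bound, I would first observe that $M'_d(k)$ is monotonically nondecreasing in $k$ (this follows directly from the recursive construction, since each $\Xi_d(k)$ is built from strictly larger pieces than $\Xi_d(k-1)$). Then, mirroring the proof of Corollary~\ref{cor:colorful}, I would subtract the recurrence at $k-2$ from the recurrence at $k$. The large summations $2\sum_{i=1}^{\lfloor k/2\rfloor-2}(M'_d(i)-1)$ telescope almost completely, leaving only a handful of terms of the form $M'_d(\lfloor k/2\rfloor-1)$, $M'_d(\lceil k/2\rceil)$, $M'_d(\lfloor (k+1)/4\rfloor)$, plus a $O(k)$ additive error coming from the $(3+\delta_{d,2})(\lfloor k/2\rfloor-\delta_{d,2})$ and constant terms. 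This yields a bound of the form
\begin{equation*}
M'_d(k)\leq M'_d(k-2)+O(k)+M'_d\!\left(\textstyle{\lfloor k/2\rfloor-1}\right)+M'_d\!\left(\textstyle{\lceil k/2\rceil}\right)+2M'_d\!\left(\textstyle{\lfloor(k+1)/4\rfloor}\right).
\end{equation*}
Plugging in the inductive hypothesis $M'_d(k)<C\exp\left(\frac{1}{2\log 2}\log^2 k\right)$, the recursive terms contribute a factor of roughly $(1+5/k+o(1/k))$ times $C\exp(\frac{1}{2\log 2}\log^2(k-2))$, while the desired bound at $k$ is $(1+\frac{2\log k}{k\log 2}+o(\log k/k))$ times the same quantity. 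For large $k$, the latter dominates, closing the induction; the additive $O(k)$ is easily absorbed because $\exp(\frac{1}{2\log 2}\log^2(k-2))$ grows faster than any polynomial.

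For the lower bound, I would again telescope the recurrence and retain only two dominant half-sized terms, obtaining an inequality of the form $M'_d(k)>M'_d(k-2)+2M'_d\!\left(\frac{k-5}{2}\right)$ (dropping all the positive terms that are inconvenient). Inserting the hypothesis $M'_d(k)>c\exp(\gamma\log^2 k)$ for $\gamma<\frac{1}{2\log 2}$ and simplifying exactly as in Corollary~\ref{cor:colorful}, the recursive contribution is $1+\frac{2}{k^{2\gamma\log 2}}+o(k^{-2\gamma\log 2})$ while the desired growth factor is $1+\frac{4\gamma\log k}{k}+o(\log k/k)$. Since $2\gamma\log 2<1$, the former dominates, which closes the induction after choosing $c$ small enough to handle base cases.

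The only real obstacle is bookkeeping: one must check that the initial conditions $M'_d(1)=1$, $M'_d(2)=2$, $M'_d(3)=4$ (rather than $M_d(1)=1$, $M_d(2)=d+1$, $M_d(3)=2d+1$) and the altered polynomial correction $(3+\delta_{d,2})(\lfloor k/2\rfloor-\delta_{d,2})$ in place of $(\lfloor k/2\rfloor-\delta_{d,2})(3d-2)$ do not spoil the asymptotics. Since both the target bound and the recursive terms are unchanged up to constants, the only effect is on the constants $C$ and $c$ (and the range of small $k$ absorbed by the base case), not on the exponent $\frac{1}{2\log 2}$. Combining the two bounds yields $M'_d(k)=k^{\frac{1}{2}\log_2(k)(1+o(1))}$, which together with Theorem~\ref{thm:modified-colorful-construction} gives the corollary.
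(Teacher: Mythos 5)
Your proposal is correct and takes the same route as the paper, which simply remarks that "the proof of Corollary~\ref{cor:colorful} carries through" to the recurrence for $M'_d(k)$; you have spelled out exactly why this is so. (One small imprecision that does not matter: after subtracting the recurrence at $k-2$ from the one at $k$, the term $(3+\delta_{d,2})(\lfloor k/2\rfloor-\delta_{d,2})$ contributes only $O(1)$, not $O(k)$, but either bound is absorbed by the super-polynomial main term.)
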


\section{Conclusions}\label{sec:conclusions}

In Section~\ref{sec:d-ary}, we found the exact values of $N_{d\ary}^{\con}(k)$ for all $d\geq 2$ and $k\geq 1$. Furthermore, the lower and upper bounds that we obtained for $N_{[d]}^{\con}(k)$ are relatively close to each other. By contrast, our lower and upper bounds for $N_{d\ary}^{\non}(k)$ and $N_{[d]}^{\non}(k)$ are far apart. This is largely because it is difficult to obtain good lower bounds for the sizes of noncontiguous universal objects, which is also true in the setting of universal permutations. It would be nice to have better methods for producing lower bounds. Of course, we also encourage the interested reader to try improving our upper bounds. 

Theorem \ref{thm:0,...,d-noncontiguous} leads us naturally to ask the following. 

\begin{question}
Fix $d\geq 2$. Does the limit \[\lim_{k \to \infty} \frac{N_{d\ary}^{\non}(k)}{N_{[d]}^{\non}(k)}\] exist, and, if so, what is its value?
\end{question}

Theorem \ref{thm:0,...,d-contiguous-lower} and Corollary \ref{cor:rhoestimate} suggest that $N_{[d]}^{\con}(k)$ has an exponential growth rate. It would be interesting to know its value, beyond the bounds $d^{\frac{1}{d}}$ and $\rho_d$ provided.
\begin{question}
Fix $d\ge 2$. Does the limit
\[\lim_{k\to\infty}N_{[d]}^{\con}(k)^{\frac{1}{k}}\]
exist, and, if so, what is its value?
\end{question}

The articles \cite{CGSCaterpillars, ChungGraham1, ChungGraham2, ChungGraham3, ChungGraham4} investigate universal trees, where the trees under consideration are unrooted and nonplane. In this setting, a tree $\mathcal T$ contains a tree $T$ if $T$ is an induced subgraph of $\mathcal T$. It would likely be interesting to consider analogous questions in a noncontiguous framework. More precisely, say that a tree $\mathcal T$ noncontiguously contains a tree $T$ if it is possible to obtain $T$ by performing a sequence of edge contractions on $\mathcal T$. In this setting, what is the smallest size of a tree that noncontiguously contains all $k$-vertex trees?   

There has also been recent interest in pattern containment/avoidance in labeled rooted trees \cite{Baril, Dotsenko}. It would be interesting to examine universal trees in these contexts, as well.

\section{Acknowledgements}\label{sec:acknowledgements}

The authors would like to thank Joe Gallian for hosting them at the University of Minnesota, Duluth, where much of this research was conducted with partial support from NSF/DMS grant 1659047 and NSA grant H98230-18-1-0010. The authors also would like to thank the anonymous referee for helpful comments. The first author was additionally supported by a Fannie and John Hertz Foundation Fellowship and an NSF Graduate Research Fellowship.


\begin{thebibliography}{99}

\bibitem{Albert}
M. Albert, M. Engen, J. Pantone, and V. Vatter, Universal layered permutations. \emph{Electron. J. Combin.}, {\bf 25} (2018).

\bibitem{Alon}
N. Alon, Asymptotically optimal induced universal graphs. \emph{Geom. Funct. Anal.}, {\bf 27} (2017), 1--32. 

\bibitem{Alstrup}
S. Alstrup, H. Kaplan, M. Thorup, and U. Zwick, Adjacency labeling
schemes and induced-universal graphs. \emph{Proc. STOC 2015}, 625--634.

\bibitem{Arratia}
R. A. Arratia, On the Stanley-Wilf conjecture for the number of permutations avoiding a
given pattern. \emph{Electron. J. Combin.}, {\bf 6} (1999), Note 1. 

\bibitem{Ashlock}
D. A. Ashlock and J. Tillotson, Construction of small superpermutations and minimal
injective superstrings. \emph{Congr. Numer.}, {\bf 93} (1993), 91--98.

\bibitem{Baril}
J.-L. Baril, S. Kirgizov, V. Vajnovszki, Patterns in treeshelves. \emph{Discrete Math.}, {\bf 340} (2017), 2946--2954. 

\bibitem{Bona}
M. B\'ona, \emph{Combinatorics of permutations}. CRC Press, 2012. 

\bibitem{Butler}
S. Butler, Induced-universal graphs for graphs with bounded maximum degree.
\emph{Graphs Combin.}, {\bf 25} (2009), 461--468.

\bibitem{Chung}
F. R. K. Chung, Universal graphs and induced-universal graphs. \emph{J. Graph Theory}, {\bf 14} (1990), 443--454.

\bibitem{CGSCaterpillars}
F. R. K. Chung, R. L. Graham, and J. Shearer. Universal caterpillars. \emph{J. Combin. Theory Ser. B}, {\bf 31} (1981), 348--355.

\bibitem{ChungGraham1}
F. R. K. Chung and R. L. Graham, On graphs which contain all small trees. \emph{J. Combin. Theory Ser. B}, {\bf 24} (1978), 14--23. 

\bibitem{ChungGraham2}
F. R. K. Chung, R. L. Graham, and D. Coppersmith, On trees containing all small trees. \emph{The Theory of Applications of Graphs} (ed. by G. Chartrand), John Wiley and Sons (1981), 265--272. 

\bibitem{ChungGraham3}
F. R. K. Chung, R. L. Graham, and N. Pippenger, On graphs which contain all small trees II. \emph{Colloquia Mathematica Societatis J\'anos Bolyai},
Keszthely, Hungary (1976), 213--223. 

\bibitem{ChungGraham4}
F. R. K. Chung and R. L. Graham, On universal graphs for spanning trees. \emph{J. London Math. Soc.}, {\bf 27} (1983), 203--211. 

\bibitem{Dairyko}
M. Dairyko, L. Pudwell, S. Tyner, and C. Wynn, Noncontiguous pattern
avoidance in binary trees. \emph{Electron. J. Combin.}, {\bf 19} (2012), P22.

\bibitem{DefantPolyurethane}
C. Defant, Polyurethane toggles. arXiv:1904.06283. 

\bibitem{DefantPostorder}
C. Defant, Postorder preimages. \emph{Discrete Math. Theor. Comput. Sci.}, {\bf 19}; 1 (2017). 

\bibitem{Dotsenko}
V. Dotsenko, Pattern avoidance in labelled trees. \emph{S\'em. Lothar.
Combin.}, {\bf 67} (2012), Article B67b. 

\bibitem{Engen}
M. Engen and V. Vatter, Containing all permutations. arXiv:1810.08252.  

\bibitem{Eriksson}
H. Eriksson, K. Eriksson, S. Linusson, and J. Wastlund, Dense packing of patterns
in a permutation. \emph{Ann. Comb.}, {\bf 11} (2007), 459--470.

\bibitem{Esperet}
L. Esperet, A. Labourel, and P. Ochem. On induced-universal graphs for
the class of bounded-degree graphs. \emph{Inf. Process. Lett.}, {\bf 108} (2008), 255--260.

\bibitem{FlajoletBook}
P. Flajolet and R. Sedgewick, Analytic combinatorics. Cambridge University Press, Cambridge, UK, 2009.

\bibitem{Flajolet1}
P. Flajolet, P. Sipala, and J.-M. Steyaert, Analytic variations on the common
subexpression problem. \emph{Lecture Notes in Computer Science: Automata, Languages, and
Programming}, {\bf 443} (1990), 220--234. 

\bibitem{Flajolet2}
P. Flajolet and J.-M. Steyaert, Patterns and pattern-matching in trees: an analysis. \emph{Information and Control} {\bf 58} (1983), 19--58. 

\bibitem{Gabriel}
N. Gabriel, K. Peske, L. Pudwell, and S. Tay. Pattern avoidance in ternary trees. \emph{J. Integer Seq.}, {\bf 15} (2012), article 12.1.5. 

\bibitem{Goldberg}
M. Goldberg and E. Lifshitz, On minimal universal trees. \emph{Matematicheskie Zametki}, {\bf 4} (1968), 371--380. 

\bibitem{Honner}
P. Honner, Unscrambling the hidden secrets of superpermutations. \emph{Quanta Mag.} (Jan. 16,
2019). 

\bibitem{Houston}
R. Houston, Tackling the minimal superpermutation problem. arXiv:1408.5108.

\bibitem{Johnson}
J. R. Johnson, Universal cycles for permutations. \emph{Discrete Math.}, {\bf 309} (2009), 5264--5270.

\bibitem{Kitaev}
S. Kitaev, Patterns in Permutations and Words. Monographs in Theoretical Computer
Science. Springer, Heidelberg, 2011.

\bibitem{Linton}
S. Linton, N. Ru\v{s}kuc, V. Vatter, Permutation Patterns, London Mathematical Society Lecture Note Series, Vol. 376. Cambridge University Press, 2010.

\bibitem{Miller}
A. B. Miller, Asymptotic bounds for permutations containing many different patterns. \emph{J.
Combin. Theory Ser. A}, {\bf 116} (2009), 92--108.

\bibitem{Pudwell}
L. Pudwell, C. Scholten, T. Schrock, and A. Serrato, Noncontiguous pattern containment in binary trees. \emph{ISRN Combinatorics} vol. 2014, Article ID 316535 (2014).

\bibitem{Rado}
R. Rado, Universal graphs and universal functions. \emph{Acta. Arith.}, (1964), 331--
340.

\bibitem{Rowland}
E. Rowland, Pattern avoidance in binary trees. \emph{J. Combin. Theory Ser. A},
{\bf 117} (2010), 741--758.

\end{thebibliography}
\end{document}